\documentclass[12pt,onecolumn]{amsart}

\usepackage[margin=1in]{geometry}
\usepackage{pdfsync}

 \usepackage[plainpages=false]{hyperref}
 \usepackage{amsfonts,amsmath,amssymb,amsthm,accents,mathtools}
 \usepackage{latexsym,lscape,rawfonts,mathrsfs,wasysym}
 \usepackage{thmtools}
 
\usepackage[dvips]{color}
\usepackage{multicol}
\usepackage[dvipsnames]{xcolor}
\usepackage{tcolorbox}

 \usepackage[arrow,matrix, all, cmtip]{xy}
 \usepackage{psfrag}
 \usepackage{graphicx} 
 \usepackage{pstool}
 \usepackage{epstopdf}

 \usepackage{array,tabularx}

 \usepackage{appendix}
 
\usepackage{txfonts}

\usepackage{enumerate}
\usepackage{tikz}
\usetikzlibrary{arrows.meta}
\usepackage{cite}
\usepackage{url}
\usepackage{accents}
\usepackage{verbatim}

\newcommand{\im}{\operatorname{Im}}
\newcommand{\Tran}{\operatorname{Tran}}
\newcommand{\Span}{\operatorname{Span}}

\newcommand{\Sec}{\operatorname{Sec}}
\newcommand{\Prin}{\operatorname{Prin}}

\DeclareMathOperator{\supp}{supp}

\DeclareMathOperator{\Vol}{Vol}
\DeclareMathOperator{\diam}{diam}
\DeclareMathOperator{\Ric}{Ric}
\DeclareMathOperator{\inj}{inj}

\DeclareMathOperator{\id}{id}

\DeclareMathOperator{\Aut}{Aut}

\DeclareMathOperator{\Deck}{Deck}
\DeclareMathOperator{\Aff}{Aff}
\DeclareMathOperator{\rank}{rank}
\DeclareMathOperator{\Ker}{Ker}

\makeatletter
\def\ExtendSymbol#1#2#3#4#5{\ext@arrow 0099{\arrowfill@#1#2#3}{#4}{#5}}

\makeatother

\makeatletter
\def\ExtendSymbol#1#2#3#4#5{\ext@arrow 0099{\arrowfill@#1#2#3}{#4}{#5}}

\makeatother

\definecolor{orange}{rgb}{1,0.5,0}
\definecolor{brown}{rgb}{0.48,0.33,0.19}
\definecolor{magenta}{rgb}{1,0,1}
\definecolor{miao}{cmyk}{0.5,0,0.2,0.2}
\definecolor{qiao}{gray}{0.96}

\newtheorem{prop}{Proposition}[section]

\newtheorem{proposition}[prop]{Proposition}

\newtheorem{theorem}[prop]{Theorem}

\newtheorem{lemma}[prop]{Lemma}

\newtheorem{corollary}[prop]{Corollary}

\newtheorem{remark}[prop]{Remark}
\newtheorem{example}[prop]{Example}

\newtheorem{definition}[prop]{Definition}

\newtheorem{conjecture}[prop]{Conjecture}

\newtheorem{property}[prop]{Property}

\newtheorem*{proof of theorem 1.2}{Proof of Theorem 1.2}
\newtheorem*{proof of theorem A.1}{Proof of Theorem A.1}

\numberwithin{equation}{section}

\title[Structure of Torus Fibration Under the First Betti Number Restriction]{Structure of Torus Fibration Under the First Betti Number Restriction}
\author{Xin Peng}
\author{Bing Wang}
\author{Zhenjian Wang}

\address{Xin Peng, School of Mathematical Sciences,
University of Science and Technology of China;
Hefei National Laboratory, Hefei 230088, China
}
\email{px2333@mail.ustc.edu.cn}

\address{Bing Wang,  Institute of Geometry and Physics, and School of Mathematical Sciences,
University of Science and Technology of China, Hefei 230026, China;
Hefei National Laboratory, Hefei 230088, China}

\email{topspin@ustc.edu.cn}
\address{Zhenjian Wang, Hefei National Laboratory, Hefei 230088, China}
\email{wzhj@ustc.edu.cn}

\date{}

\begin{document}
	
	\begin{abstract}
	We study torus bundles with affine structure groups. First, we establish a rigidity result under constraints on the first Betti numbers: If $ \text{b}_{1}(M)-\text{b}_{1}(N)=\dim M-\dim N $ holds for a torus bundle $M$ with an affine structure group over a closed manifold $N$, then $M$ can be classified. 
     Second, we obtain some necessary and sufficient conditions for the topological splitting of principal torus bundles. 
       These results improve the understanding of the geometry of collapsing sequences under the first Betti number constraints, thereby extending the prior work by Huang-Wang~\cite{HuangWang20.1}.
    \end{abstract}
    
    \maketitle
    \tableofcontents

    \section{Introduction}
      \unskip
      ~~~Let $\mathcal{M}(m,D):=\{(M^{m},g)|\Ric(g)\geq-(m-1)g,~\diam(M,g)\le D\}$ denote a class of $m$-dimensional compact Riemannian manifolds with a uniform lower bound on the Ricci curvature and a uniform upper bound on diameter. Similarly, let 
      $$\mathcal{M}(k,D,\upsilon):=\{(N^k,h)||\Sec(h)|\le 1 ,  \diam(N,h)\le D ,  \Vol(N,h) \ge \upsilon>0\}$$
      denote a class of $k$-dimensional compact Riemannian manifolds with bounded sectional curvature, diameter, and volume. By Gromov's precompactness theorem (see\cite{GLP}), any sequence $\{(M_{i},g_{i})\}_{i\in \mathbb{N}^{+}}\subseteq \mathcal{M}(m,D)$ admits a subsequence converging to a metric space $(X,d)$ in the Gromov-Hausdorff sense. We are concerned with the topological and geometric properties of such convergent sequences $(M_{i},g_{i})$ and their limit spaces $(X,d)$; for related topological results, we refer to \cite{CGI},\cite{CGII},\cite{CFG92},\cite{Gromov78b},\cite{KW11},\cite{NaberZhang},\cite{Ruh}.

      Consider the case where a sequence $\{(M^{m}_{i},g_{i})\}_{i\in \mathbb{N}^{+}}\subseteq \mathcal{M}(m,D)$ converges to a compact manifold $(N^{k},h)\in \mathcal{M}(k,D,\upsilon)$ in the Gromov-Hausdorff sense. In this setting, S.S. Huang and B. Wang~\cite[Theorem 1.1]{HuangWang20.1} show that the difference between the first Betti number of $M$ and $N$ is no larger than the difference in dimension between $M$ and $N$ (See also Rong's proof in~\cite[Theorem 0.10]{Rong22}).
      Moreover, if the equality holds, then $M$ is diffeomorphic to a torus bundle over $N$. The torus bundle appearing in~\cite[Theorem 1.1]{HuangWang20.1} is an instance of a Fukaya fibration; for other types, see \cite{Fukaya87ld},\cite{Fukaya88},\cite{HuangWang20.2},\cite{Huang20},\cite{HKRX18},\cite{NaberZhang}. Furthermore, Fukaya’s analysis~\cite[Theorem 0.1]{Fukaya89} shows the structure group of such a torus bundle is $T^{m-k} \rtimes \Aut (T^{m-k})$.
       
    In this paper, we further investigate the structure of such torus bundles in the equality case \begin{equation}
                    \text{b}_{1}(M)-\text{b}_{1}(N)=\dim M-\dim N\tag{*}~.\label{eq:difference} 
                \end{equation} 
     Specifically, we address the following questions:
    \begin{enumerate}[(Q1)]
        \item For a given base manifold $N$, can we classify all such total spaces $M$?
        \item Under what conditions is $M$ diffeomorphic to the product $N \times T^{m-k}$?
    \end{enumerate}
    
    Our first main result characterizes the structure of such torus bundles when the Betti number difference equals the dimension difference:
    \begin{theorem}[Structure Theorem under Difference of the First Betti Numbers]\label{the: Structure Theorem}
    		Let $N^{k}$ be a smooth closed manifold and $M^{m}$ a smooth torus bundle over $N$ satisfying condition~(\ref{eq:difference}) and the smallest structure group of $M$ is contained in the \textbf{affine group} $T^{m-k} \rtimes \Aut (T^{m-k})$.
    	    Then:
    	 	\begin{enumerate}[(1)]
    	 	\item $M$ is a principal $T^{m-k}$-bundle over $N$, equipped with a smooth $T^{m-k}$-action;
    	 	\item There exist finite normal covering spaces $\hat{M}$ of $M$ and $\hat{N}$ of $N$ of the same index with abelian deck transformation groups, such that $\hat{M}$ is diffeomorphic to the product $\hat{N} \times T^{m-k}$.  
            \end{enumerate}
    \end{theorem}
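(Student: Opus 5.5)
Write $n=m-k$ and $F\cong T^{n}$ for the fiber. The plan is to read off everything from the Serre spectral sequence in low degrees (equivalently, the five-term exact sequence in rational cohomology) for $F\to M\xrightarrow{\pi} N$. The coefficient system $\mathcal{H}^1(F;\mathbb{Q})$ is given by the monodromy $\rho:\pi_1(N)\to \Aut(T^n)=GL(n,\mathbb{Z})$. This holds because the structure group lies in $T^n\rtimes\Aut(T^n)$ and the translation part acts trivially on cohomology. The five-term sequence gives
\[
0\to H^1(N;\mathbb{Q})\to H^1(M;\mathbb{Q})\to H^0(N;\mathcal{H}^1(F;\mathbb{Q}))=(\mathbb{Q}^n)^{\rho}\xrightarrow{d_2} H^2(N;\mathbb{Q}).
\]
It follows that $b_1(M)-b_1(N)=\dim\ker d_2\le \dim(\mathbb{Q}^n)^{\rho}\le n$. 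Condition (\ref{eq:difference}) forces both inequalities to be equalities. Hence $\rho$ is trivial on $H^1(F;\mathbb{Q})$, and therefore on $H_1(F;\mathbb{Z})\cong\mathbb{Z}^n$. We also get $d_2=0$ on the invariant classes.

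For (1), trivial monodromy means the linear part of the structure group can be reduced to the identity. Concretely, the transition functions $x\mapsto A_{\alpha\beta}x+b_{\alpha\beta}$ have locally constant $A_{\alpha\beta}\in GL(n,\mathbb{Z})$. These define a flat $GL(n,\mathbb{Z})$-bundle whose holonomy is $\rho$. Since $\rho$ is trivial, we can change the trivializations so that all $A_{\alpha\beta}=I$. The transition functions are then translations of $T^n$, so $M$ is a principal $T^n$-bundle. Translation by $T^n$ on fibers commutes with the transitions and gives the smooth action.

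For (2), principal $T^n$-bundles are classified by their Euler/Chern classes $e=(e_1,\dots,e_n)\in H^2(N;\mathbb{Z})^n$. The differential $d_2:H^1(F)\to H^2(N)$ is precisely $x_i\mapsto e_i$. So $d_2=0$ rationally says each $e_i$ is a torsion class in $H^2(N;\mathbb{Z})$.

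The main step, which I expect to be the obstacle, is to kill torsion Euler classes by passing to a finite abelian cover. Via the universal coefficient theorem, $\mathrm{Tor}\,H^2(N;\mathbb{Z})\cong \mathrm{Ext}(H_1(N),\mathbb{Z})\cong \mathrm{Hom}(\mathrm{Tor}\,H_1(N),\mathbb{Q}/\mathbb{Z})$. A torsion class $e_i$ is the Bockstein of a class $\phi_i\in H^1(N;\mathbb{Q}/\mathbb{Z})=\mathrm{Hom}(H_1(N),\mathbb{Q}/\mathbb{Z})$; equivalently, $e_i$ is the first Chern class of a flat circle bundle with holonomy $\phi_i$.

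Take $\Gamma=\bigcap_i\ker(\phi_i\circ h)\subset\pi_1(N)$, where $h$ is the Hurewicz map. This is a finite-index normal subgroup with abelian quotient, since the quotient embeds in the finite group $\prod_i\mathrm{im}\,\phi_i$. Let $\hat N\to N$ be the corresponding cover. Then $p^*\phi_i=0$, so $p^*e_i=0$, and the pullback bundle $\hat M=p^*M$ is trivial, i.e. diffeomorphic to $\hat N\times T^n$.

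Finally, $\hat M\to M$ is the pullback of the cover $\hat N\to N$, so it is a normal cover with the same deck group $\pi_1(N)/\Gamma$, which is abelian, and the same index. The points to check carefully are the Bockstein identification and finiteness of the images of the $\phi_i$. Finiteness holds because each $\phi_i$ factors through a finite torsion subgroup modulo a lift: one can choose $\phi_i$ with finite image since $e_i$ has finite order $r$, using $\mathbb{Z}/r$ coefficients.
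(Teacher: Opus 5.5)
Your proof is correct, but it takes a genuinely different route from the paper's. The outline of the paper's argument is this. For (1), it derives triviality of the linear monodromy from fundamental-group algebra. Theorem 3.1 shows that $l_*$ is injective and $l_*(\pi_1(T^{m-k}))\cap[\pi_1(M),\pi_1(M)]=\{e\}$. This forces $\pi_1(F_\gamma)$ to be abelian for the pullback $F_\gamma$ of $M$ over every loop $\gamma$, hence $\Aff(g_\gamma)=\id$. The paper then appeals to the holonomy reduction theorem. For (2), it takes the cover $\hat N$ corresponding to the kernel of $\pi_1(N)\to \mathrm{Tor}\,H_1(N)$ and checks that the $\pi_1$-sequence of $\hat M=\mathrm{pr}_N^*M\to\hat N$ splits. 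It then shows the Euler class vanishes by an obstruction-theoretic argument (Theorem 1.2).

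You obtain the monodromy statement in one line from the five-term exact sequence. As a by-product you get the inequality $b_1(M)-b_1(N)\le m-k$ for affine torus bundles. Your \v{C}ech reduction, writing $A_{\alpha\beta}=C_\alpha C_\beta^{-1}$ with locally constant $C_\alpha$, is more direct than the paper's use of holonomy.

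For (2), you identify $d_2$ with the Euler class and conclude its components are torsion. You then kill them by the Bockstein and naturality. This makes transparent the paper's remark that the only obstruction lives in the torsion of $H^2(N;\mathbb{Z}^{m-k})$. It also yields a cover adapted to $M$: if you normalize each $\phi_i$ to vanish on a free complement of $\mathrm{Tor}\,H_1(N)$, your deck group is a quotient of the paper's.

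The paper's cover, by contrast, depends only on $N$, and its Section 5 classification via deck transformations relies on exactly that. The paper's route also produces the splitting criterion (Theorem 1.2) and facts of independent use, such as centrality of the fiber subgroup.

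Two small remarks on your write-up. First, the finiteness of $\im\phi_i$ that you flagged is automatic: $H_1(N)$ is finitely generated, and every finitely generated subgroup of $\mathbb{Q}/\mathbb{Z}$ is finite. Second, concluding that $\hat M$ is \emph{diffeomorphic} to $\hat N\times T^{m-k}$ from vanishing of the Euler class uses the smooth classification of principal torus bundles. This is the paper's Appendix A, which both routes rely on.
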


     Theorem \ref{the: Structure Theorem}(2) implies that $M$ is ``almost'' a product of $N$ and $T^{m-k}$. Moreover, Theorem \ref{the: Structure Theorem}(1) shows that $M$ is a principal bundle over $N$. Using the Euler class, we can further distinguish the topology of $M$. 

     We point out that there exists an example where $M$ is not diffeomorphic to the product $N \times T^{m-k}$. Let $M:=S^{3}\times S^{1}/ \left \langle \alpha' \right \rangle$ and $N:= S^{3}/\left \langle \alpha \right \rangle$, 
     where \begin{equation}
         \alpha':S^{3}\times S^{1}\to S^{3}\times S^{1},\alpha'(x,y,e^{i\theta})=(-x,-y,-e^{i\theta})
     \end{equation}
     and \begin{equation}
         \alpha:S^{3}\to S^{3}, \alpha(x,y)=(-x,-y)
     \end{equation}for any $(x,y)\in S^{3}\subseteq \mathbb{C}^{2},\theta\in [0,2\pi)$. Then, the map 
     \begin{equation}
     f:M\to N, f([(x,y,e^{i\theta})])=[(x,y)], \forall [(x,y,e^{i\theta})]\in M, [(x,y)]\in N
     \end{equation} gives the toric fibration over $N$ with structure group isomorphic to $\mathbb{Z}_{2}=\{1,-1\}<S^{1}$.
     Computing the fundamental groups, we obtain condition (\ref{eq:difference}) and $\pi_{1}(M)\cong \mathbb{Z}$, which is torsion-free, while $\pi_{1}(N)\cong \mathbb{Z}_{2}$ contains torsion. Therefore, $M$ cannot be diffeomorphic to the product $N \times S^{1}$.

    Our second main result gives a sufficient and necessary condition for $M$ to be a product:
    
    \begin{theorem}[Topological Splitting Theorem for Principal $ T^{m-k}$-Bundles] \label{the:Topological Splitting Theorem}
    		Given a smooth closed manifold $N^{k}$ and a smooth principal $T^{m-k}$-bundle $M^{m}$ over $N$ with smooth $T^{m-k}$-action.
    	If the homotopy exact sequence of the bundle can be decomposed into a short splitting exact subsequence
        \begin{equation}
         \xymatrix{
         0 \ar[r] & \pi_{1}(T^{m-k}) \ar[r]^-{l_{*}} & \pi_{1}(M) \ar[r]^{f_{*}} & \pi_{1}(N) \ar[r] & 0 
         }\label{se: short exact sequence (2)},
         \end{equation}
        then $M$ is diffeomorphic to the product $N\times T^{m-k}$.	Here, $l:T^{m-k}\to M$ is an embedding of the fiber and $f:M\to N$ is the bundle projection.
          
        In particular, if $N$ is simply connected, then $M$ 
        is a trivial bundle.
    \end{theorem}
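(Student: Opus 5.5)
Principal $T^{r}$-bundles over $N$, with $r=m-k$, are classified by their Euler (Chern) class $e(M)\in H^{2}(N;\mathbb{Z}^{r})=H^{2}(N;\mathbb{Z})^{r}$, since $BT^{r}=(\mathbb{CP}^{\infty})^{r}$ is a $K(\mathbb{Z}^{r},2)$. Decomposing $T^{r}=(S^{1})^{r}$, the bundle is the fiber product of $r$ principal circle bundles $M_{j}=M/T^{r-1}_{(j)}$ with Euler classes $e_{j}\in H^{2}(N;\mathbb{Z})$. The plan is therefore to show that the splitting of (\ref{se: short exact sequence (2)}) forces every $e_{j}=0$. Then the bundle is trivial as a principal bundle, so $M\cong N\times T^{r}$ equivariantly. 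Smoothness follows because topologically trivial principal bundles with smooth actions admit smooth sections, by smoothing a continuous section.

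The first step links the extension class of (\ref{se: short exact sequence (2)}) to $e(M)$. Since the fiber is connected and the structure group $T^{r}$ is connected, $\pi_{1}(N)$ acts trivially on $\pi_{1}(T^{r})$. So (\ref{se: short exact sequence (2)}) is a central extension, classified by some $c\in H^{2}(\pi_{1}(N);\mathbb{Z}^{r})$. The standard fact I would establish is $c\mapsto e(M)$ under the inflation map $\kappa^{*}:H^{2}(\pi_{1}(N);\mathbb{Z}^{r})\to H^{2}(N;\mathbb{Z}^{r})$, where $\kappa:N\to K(\pi_{1}(N),1)$ is the classifying map. To see this, take the Borel construction. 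Because $\pi_{1}(T^{r})\to\pi_{1}(M)$ is injective, the classifying map $N\to BT^{r}$ composed with the quotient $\pi_{1}(M)\to\pi_{1}(N)$ produces a map of fibrations from $T^{r}\to M\to N$ to $T^{r}\to K(\pi_{1}(M),1)\to K(\pi_{1}(N),1)$. The latter is a principal $K(\mathbb{Z}^{r},1)$-fibration whose characteristic class is the extension class $c$. Naturality of the transgression $d_{2}$ in the Serre spectral sequence, applied to the fundamental class of $H^{1}(T^{r};\mathbb{Z}^{r})$, then gives $e(M)=\kappa^{*}c$. This transgression comparison is the main obstacle. I would check it carefully via the Serre spectral sequence, or alternatively via an explicit cocycle obtained by lifting a cellular section over the 2-skeleton of $N$.

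The second step concludes. The splitting hypothesis means $c=0$, so $e(M)=\kappa^{*}0=0$ and the bundle is trivial. If one only has the weaker statement that the sequence splits as groups, and not that it is a central split extension, the argument is unaffected: a split central extension with abelian kernel has a retraction $\pi_{1}(M)\to\mathbb{Z}^{r}$, which is exactly $c=0$. A cleaner alternative route uses the retraction $\rho:\pi_{1}(M)\to\mathbb{Z}^{r}$ with $\rho\circ l_{*}=\mathrm{id}$. This gives classes $\rho_{j}\in H^{1}(M;\mathbb{Z})$ that restrict to the dual generators on the fiber. In the Gysin sequence for the circle bundle $M_{j}$, the term $e_{j}\cup-:H^{0}(N)\to H^{2}(N)$ is preceded by integration over the fiber $H^{1}(M_{j})\to H^{0}(N)$, and $e_{j}=0$ exactly when that integration is surjective. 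Realizing $\rho_{j}$ by a map $M\to S^{1}$ that has degree one on the $j$-th circle fiber gives the needed surjectivity after descending to $M_{j}$. Descending to $M_{j}$ requires building the maps $\Phi_{j}:M\to S^{1}$ so that they are equivariant and compatible with the other circle factors. I would achieve this by averaging over the $T^{r}$-action to make each $\Phi_{j}$ equivariant, $\Phi_{j}(t\cdot x)=t_{j}\Phi_{j}(x)$. Then $x\mapsto(f(x),\Phi_{1}(x),\dots,\Phi_{r}(x))$ is directly an equivariant diffeomorphism $M\to N\times T^{r}$.

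The averaging trick is how I would actually write the proof. Pick any smooth $\phi_{j}:M\to S^{1}=\mathbb{R}/\mathbb{Z}$ representing $\rho_{j}$. The function $t\mapsto\phi_{j}(t\cdot x)-t_{j}$ is null-homotopic in $t$, since $\rho_{j}\circ l_{*}$ is the $j$-th projection, so it lifts to $\mathbb{R}$. Averaging this lift over $T^{r}$ with Haar measure and subtracting from $\phi_{j}$ yields an equivariant $\Phi_{j}$. The resulting map is then an equivariant bundle map over $N$ restricting to diffeomorphisms on fibers, hence a diffeomorphism. Finally, if $N$ is simply connected, then $\pi_{2}(N)\to\pi_{1}(T^{r})$ is zero: a nontrivial image would produce torsion in, or collapse, the abelian $\pi_{1}(M)$. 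So the sequence $0\to\mathbb{Z}^{r}\to\pi_{1}(M)\to0$ is trivially split and the special case follows.
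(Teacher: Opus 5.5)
Your main argument is correct, but it takes a genuinely different route from the paper's. The paper works by obstruction theory on a cell structure of $N$:
\begin{itemize}
\item it twists a section over the $1$-skeleton by loops in $T^{m-k}$ so that its image on $\pi_1$ lies in $\im\zeta$;
\item it notes that the lifted boundary of each $2$-cell then represents an element of $\im\zeta\cap\Ker f_*=\{e\}$, and uses injectivity of $l_*$ to see that the obstruction cocycle vanishes, so $C_1(M)=0$;
\item triviality then comes from the classification of principal $T^{m-k}$-bundles by $H^2(N;\mathbb{Z}^{m-k})$ (Corollary~\ref{Cor:2.14}), and smoothness from the Appendix.
\end{itemize}
Your first sketch, $e(M)=\kappa^*c$ with $c=0$, is the homotopy-theoretic version of exactly that cellular computation; the ``explicit cocycle'' variant you mention is what the paper does.

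Your averaging argument is different in kind. Using centrality of $l_*\pi_1(T^r)$, it converts the splitting into a retraction $\rho:\pi_1(M)\to\mathbb{Z}^r$. It then realizes the components of $\rho$ by maps $M\to S^1$ and averages over the torus action to get an equivariant $\Phi:M\to T^r$. So $(f,\Phi)$ is an explicit equivariant diffeomorphism. This bypasses obstruction theory, the classifying-space classification and the smooth/topological comparison entirely. It also shows that all one needs is surjectivity of $l^*:H^1(M;\mathbb{Z})\to H^1(T^r;\mathbb{Z})$; in particular Corollary~\ref{cor 1.3} follows at once, without Lemma~\ref{lem3.4}. What the paper's route buys in exchange is an explicit identification of the obstruction with $C_1(M)\in H^2(N;\mathbb{Z}^{m-k})$, which is how it explains that the obstruction sits in torsion. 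Your trick, by contrast, relies essentially on the group being a torus (lifting to $\mathbb{R}$ and Haar averaging).

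Two points need fixing.

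\emph{The averaging formula is off by a sign/normalization.} Let $\bar\psi(x)$ be the Haar average of a real lift of $\psi_x(t)=\phi_j(t\cdot x)-t_j$. Then $\phi_j-\bar\psi$ is not equivariant: if $\phi_j$ is already equivariant, $\psi_x$ is the constant $\phi_j(x)$ and $\phi_j-\bar\psi\equiv 0$. The equivariant map is $\Phi_j:=\bar\psi \bmod \mathbb{Z}$ itself. The lift is unique up to an integer, so $\Phi_j$ is well defined in $\mathbb{R}/\mathbb{Z}$ (and smooth, lifting locally in $x$). The identity $\psi_{s\cdot x}(t)=\psi_x(t+s)+s_j$ together with translation invariance of Haar measure gives $\Phi_j(s\cdot x)=\Phi_j(x)+s_j$.

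\emph{Your justification of the simply connected case is false.} Simple connectivity of $N$ does not force $\partial_1:\pi_2(N)\to\pi_1(T^r)$ to vanish. Torsion in, or collapse of, $\pi_1(M)=\mathbb{Z}^r/\im\partial_1$ is no contradiction: the Hopf bundle $S^3\to S^2$ and the lens space bundles $L(p,1)\to S^2$ are nontrivial principal circle bundles over a simply connected base. The ``in particular'' clause holds only under the standing hypothesis that the sequence is short exact, i.e.\ $l_*$ is injective. Then $\pi_1(N)=1$ makes $l_*$ an isomorphism and the splitting is automatic; that is the argument you should give.
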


     Analyzing the homotopy groups of $M$ and $N$, we observe that there exist normal covering spaces $\hat{M}$ and $\hat{N}$ of $M$ and $N$, respectively, they satisfy condition (\ref{se: short exact sequence (2)}). Consequently, $\hat{M}\cong \hat{N}\times T^{m-k}$.
    
    \begin{corollary}\label{cor 1.3} 
    Let $N^{k}$ be a smooth closed manifold and $M^{m}$ a smooth principal $T^{m-k}$-bundle over $N$ with a smooth $T^{m-k}$-action. If the sequence
        \begin{equation}
         \xymatrix{
         0 \ar[r] & H_{1}(T^{m-k}) \ar[r]^-{\bar{l}_{*}} & H_{1}(M) \ar[r]^{\bar{f}_{*}} & H_{1}(N) \ar[r] & 0 
         }\label{se: short exact sequence (3)}
         \end{equation}
          induced by the bundle is splitting, then $M$ is diffeomorphic to the product $N\times T^{m-k}$.
    \end{corollary}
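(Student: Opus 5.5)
The plan is to reduce Corollary~\ref{cor 1.3} to Theorem~\ref{the:Topological Splitting Theorem} by lifting the homology splitting to a splitting of the fundamental group sequence~(\ref{se: short exact sequence (2)}).

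\textbf{Step 1: the fiber injects and the sequence is short exact.} For a principal $T^{m-k}$-bundle with connected fiber, the homotopy exact sequence gives
\[
\pi_{2}(N)\xrightarrow{\partial}\pi_{1}(T^{m-k})\xrightarrow{l_{*}}\pi_{1}(M)\xrightarrow{f_{*}}\pi_{1}(N)\to 0 .
\]
Since $\pi_{1}(T^{m-k})=H_{1}(T^{m-k})\cong\mathbb{Z}^{m-k}$, the composite $\pi_{1}(T^{m-k})\to\pi_{1}(M)\to H_{1}(M)$ is $\bar{l}_{*}$ under the Hurewicz identification. By hypothesis~(\ref{se: short exact sequence (3)}), $\bar{l}_{*}$ is injective, so $l_{*}$ is injective. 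Hence $\partial=0$ and we obtain a short exact sequence on $\pi_{1}$.

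\textbf{Step 2: construct a retraction on $\pi_{1}$.} The splitting of~(\ref{se: short exact sequence (3)}) gives a homomorphism $r:H_{1}(M)\to H_{1}(T^{m-k})$ with $r\circ\bar{l}_{*}=\mathrm{id}$. Let $h:\pi_{1}(M)\to H_{1}(M)$ be the Hurewicz map, and define
\[
\rho:=r\circ h:\pi_{1}(M)\to\pi_{1}(T^{m-k}).
\]
This is a homomorphism because the target is abelian. By Step 1, $\rho\circ l_{*}=r\circ\bar{l}_{*}=\mathrm{id}$. So $l_{*}$ admits a left inverse.

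\textbf{Step 3: check the splitting is of the required type.} Because the fiber is central, a left-splitting forces a direct product. The orbit map of the $T^{m-k}$-action shows that $l_{*}(\pi_{1}(T^{m-k}))$ is central in $\pi_{1}(M)$: loops in the fiber can be slid around any loop in $M$ via the action. The map $(\rho,f_{*}):\pi_{1}(M)\to\mathbb{Z}^{m-k}\times\pi_{1}(N)$ is then an isomorphism by the five lemma. This yields a section $\pi_{1}(N)\to\ker\rho\subset\pi_{1}(M)$, so~(\ref{se: short exact sequence (2)}) is a split short exact sequence in every sense.

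\textbf{Step 4: conclude.} Theorem~\ref{the:Topological Splitting Theorem} now gives $M\cong N\times T^{m-k}$.

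\textbf{Expected obstacle.} The main subtlety is Step 3: making sure that whatever notion of ``splitting'' Theorem~\ref{the:Topological Splitting Theorem} uses, left or right, is delivered by the retraction. Centrality of the fiber subgroup from the principal action is the key input that makes the two notions coincide. I would verify centrality first, using the homotopy $t\mapsto g_{t}\cdot\gamma$ along a fiber loop $g_{t}$.
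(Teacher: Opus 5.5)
Your proposal is correct and follows the paper's route: the paper also lifts the homology splitting to a splitting of the $\pi_1$-sequence (Lemma~\ref{lem3.4}) and then applies Theorem~\ref{the:Topological Splitting Theorem}. The difference is only in the group theory. The paper takes the complement $G=h^{-1}(\im j)$ for a section $j$ of $\bar{f}_{*}$ and relies on Theorem~\ref{Thm3.1}(1)--(3), which applies because exactness of the homology sequence forces condition~(\ref{eq:difference}). Your retraction $\rho=r\circ h$ produces the same complement $\ker\rho$ and the direct product directly, so you bypass Theorem~\ref{Thm3.1}, and the centrality remark in your Step~3 is not needed.
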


       Note that the assumption in Corollary~\ref{cor 1.3} differs from that in Theorem~\ref{the:Topological Splitting Theorem}. Nevertheless, Corollary~\ref{cor 1.3} still provides a necessary and sufficient condition for the topological splitting of the principal $T^{m-k}$-bundle. We observe that the only obstacle to making $M$ a product of $N$ and $T^{m-k}$ ``lies'' in the torsion part of $H_{1}(N)$. In fact, it follows from the Universal Coefficient Theorem that the obstacle ``lies'' in the torsion part of $H^{2}(N;\mathbb{Z}^{m-k})$.\\     

        The proof of Theorem~\ref{the: Structure Theorem}(1) leverages a structural simplification afforded by the topological condition on Betti numbers. Specifically, when the base space $N$ is the circle $S^{1}$, this condition forces the vanishing of the affine component in the structure group of bundle $M$, reducing it to a torus. This reduction is pivotal, as it establishes that the fibration is a principal torus bundle. The principal nature of the bundle provides a rigid framework for the subsequent analysis.

        The core of the argument then proceeds through a homotopy-theoretic construction. The triviality of the boundary map in the associated homotopy exact sequence allows for the construction of finite covering spaces $\hat{M}$ and $\hat{N}$ of $M$ and $N$, respectively. On these covers, the problem becomes tractable via obstruction theory. Applying this theory shows that the covering space $\hat{M}$ is diffeomorphic to a topological product of $\hat{N}$ and $T^{m-k}$, from which the global structure of the original manifold $M$  can be fully deduced.\\

    This paper is organized as follows:

    In Section \ref{sec: Prel}, we introduce background material used in subsequent sections. In Section \ref{sec: Homo}, we analyze the homotopy exact sequence of the torus bundle under assumption (\ref{eq:difference}), construct the finite normal covering spaces $\hat{M}$ and $\hat{N}$ as in Theorem~\ref{the: Structure Theorem}, and verify that they satisfy conditions $(\ref{se: short exact sequence (2)})$ of Theorem~\ref{the:Topological Splitting Theorem}. In Section \ref{sec: Pri}, under the assumptions of Theorem~\ref{the: Structure Theorem}, we prove that the smallest structure group of the bundle reduces to $T^{m-k}$. Next, we prove Theorem~\ref{the:Topological Splitting Theorem} by computing the Euler class of the bundle. In Section \ref{sec: Comp}, we analyze the deck transformation groups of the finite normal covers and classify all such $M$ in Theorem~\ref{the: Structure Theorem}. In Section \ref{sec: Fur}, we discuss further results and conjectures related to the Betti numbers and the topological splitting theorem for general principal $G$-bundles.\\

    \textbf{Acknowledgements:}  Xin Peng thanks Yu Li for helpful discussions. Xin Peng and Bing Wang are supported by YSBR-001, NSFC-12431003 and a research fund from USTC and Hefei National Laboratory.  Zhenjian Wang is supported by NSFC-12301100.
    
    \section*{Notations and Conventions}
    Throughout this paper, we adhere to the following notations and conventions:
   \begin{enumerate}
        \item[$\bullet$] $f: M \to N$ denotes the torus bundle with an affine structure group. The fiber is the $(m-k)$-dimensional torus $T^{m-k}$, and $l: T^{m-k} \to M$ denotes a fiber embedding.
   	  \item[$\bullet$] $\hat{M}$ and $\hat{N}$ denote the finite 
        normal covering spaces of $M$ and $N$, respectively, constructed in Section \ref{sec: Homo}. 
        $\text{pr}_M: \hat{M} \to M$ and $\text{pr}_N: \hat{N} \to N$ denote the covering maps.     
        \item[$\bullet$] $\text{pr}_{\hat{N}}:\hat{M}\to \hat{N}$ denotes the torus fibration (see section~\ref{sec: Homo}), $\text{pr}_{T^{m-k}}:\mathbb{R}^{m-k}\to T^{m-k}$ denotes the standard covering map and $p_{T^{m-k}}:\hat{M}(\cong \hat{N}\times T^{m-k})\to T^{m-k}$ denotes the fiber component projection (see Section~\ref{sec: Comp}).
        \item[$\bullet$] For a covering map $p: \widetilde{X} \to X$, the deck transformation group is denoted by $\Deck(\widetilde{X}/X)$. The element $[0]$ denotes the identity of $T^{m-k}$.
        \item[$\bullet$] $h:\pi_{1}(M)\to H_{1}(M)$, $h':\pi_{1}(N)\to H_{1}(N)$ and $h'':\pi_{1}(T^{m-k})\to H_{1}(T^{m-k})$ denote the Hurewicz homomorphisms (see Section \ref{sec: Homo}).
        \item[$\bullet$] $\text{Pr}_M: \pi_1(\hat{M}) \to \pi_1(M)$ and $\text{Pr}_N: \pi_1(\hat{N}) \to \pi_1(N)$ denote the homomorphisms induced on fundamental groups by the covering maps.
        
        $\text{Pr}_{M}^{*}:\pi_{1}(M)\to \Deck(\hat{M}/M)$, $\text{Pr}_{N}^{*}:\pi_{1}(N)\to \Deck(\hat{N}/N)$ and $\text{Pr}_{\hat{N}\times\mathbb{R}^{m-k}/M}^{*}:\pi_{1}(M)\to \Deck(\hat{N}\times\mathbb{R}^{m-k}/M)$ denote the quotient homomorphisms. 
        
        $\text{Pr}_{\hat{M}/M}:\Deck(\hat{N}\times \mathbb{R}^{m-k}/\hat{M})\to \Deck(\hat{N}\times \mathbb{R}^{m-k}/M)$ and $(\text{Pr}_{T^{m-k}})_{*}:\Deck(\hat{N}\times \mathbb{R}^{m-k}/M)\to \Deck(\hat{M}/M)$ denote homomorphisms between deck transformation groups (see Section \ref{sec: Comp}).
\end{enumerate}
   
\section{Preliminaries}\label{sec: Prel}

   In this section, we review background material on fiber bundles, covering spaces, and obstruction theory, which will be used throughout the paper.
   
    \subsection{Homotopy theory}
    We begin by recalling several fundamental results in homotopy theory.
    \begin{theorem}[Covering Homotopy Theorem {\cite[Subsection 11.7]{Steenrod}}]\label{the:Covering Homotopy Theorem}	Let $B$ be a fiber bundle over a topological space $X^{'}$, and $X$ another topological space such that any open covering of $X$ can be reduced to a countable covering. Let $h:X\times[0,1]\to X^{'}$ be a homotopy from $h_{0}=h(\cdot,0)$ to $h_{1}=h(\cdot,1)$, and $H_{0}:X\to B$ a lift of $h_{0}$. Then there exists a lift $H:X\times[0,1]\to B$ of $h$ with $H(\cdot,0)=H_{0}$.

    The commutative diagram below illustrates this lifting property:
   \begin{equation}
       \xymatrix{
             X \ar[r]^{H_{0}} \ar[d]_{(\id_{X},0)}& B \ar[d]^{p}\\
      X\times[0,1] \ar@{-->}[ur]^{H} \ar[r]^-{h} &X'.
       }
   \end{equation}
\end{theorem}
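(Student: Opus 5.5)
The plan is the classical route to the homotopy lifting property: reduce to products via local triviality, handle the product case directly, and then glue local lifts along a subdivision of the time interval.

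\textbf{Product case.} If $B=X'\times F$ is trivial with $p$ the first projection, write $H_{0}=(h_{0},\phi)$ with $\phi\colon X\to F$. Then
\[
H(x,t)=\bigl(h(x,t),\phi(x)\bigr)
\]
is a lift of $h$ with $H(\cdot,0)=H_{0}$. More generally, the same formula in a chart $\varphi_{j}\colon p^{-1}(U_j)\to U_j\times F$ lifts $h$ on any set $A\times[a,b]$ with $h(A\times[a,b])\subseteq U_j$. It extends a prescribed lift on $A\times\{a\}$, since one just keeps the fibre coordinate constant in $t$.

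\textbf{Step 1: localize.} Choose a trivializing cover $\{U_j\}$ of $X'$. For each $x\in X$, compactness of $[0,1]$ and the tube lemma give an open neighbourhood $V_x$ and a partition $0=t_0<\dots<t_n=1$ with each $h(V_x\times[t_{i-1},t_i])$ contained in a single $U_j$. On $V_x\times[0,1]$ we then build a lift inductively in $i$ by the product case, each time starting from the lift already obtained at time $t_{i-1}$. This gives local lifts over the sets $V_x\times[0,1]$.

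\textbf{Step 2: reduce to a countable family.} By the hypothesis on $X$, the cover $\{V_x\}$ can be reduced to a countable subcover $\{V_r\}_{r\ge1}$. Following Steenrod's argument, we then pick functions $u_r\colon X\to[0,1]$ subordinate to this countable cover, with $\max_r u_r=1$ pointwise. Such functions exist because the spaces in Steenrod's setting are normal, which is implicit in the statement.

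\textbf{Step 3: glue by time reparametrization.} Define $\tau_0=0$ and $\tau_r=\max(u_1,\dots,u_r)$. Set $X_r=\{(x,t):t\le\tau_r(x)\}$, and write the projection $X_r\to X\times[0,1]$ as $(x,t)\mapsto(x,t)$. We build the lift inductively, so that $H_r$ is defined on $X_r$ and agrees with $H_{r-1}$ on $X_{r-1}$.

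The step from $H_{r-1}$ to $H_r$ is needed only over $\operatorname{supp}u_r\subseteq V_r$, which is exactly where $\tau_r>\tau_{r-1}$. There we use the strip-by-strip chart construction of Step 1 on $V_r$, now starting from time $\tau_{r-1}(x)$ instead of $0$. It therefore extends the existing values along each vertical segment $\{x\}\times[\tau_{r-1}(x),\tau_r(x)]$. Continuity follows because the extension depends continuously on $x$ and the starting time.

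\textbf{Main obstacle.} The real difficulty is the limiting step $H=\lim H_r$. Local finiteness of the cover is not assumed, so each point must lie in the interior of some $X_r$ with the later stages not altering it. I would handle this as Steenrod does, replacing the $u_r$ by a locally finite modification. Countability is what makes this possible: local finiteness lets the inductive construction stabilize near every point, and it also yields the continuity of the limit $H$.
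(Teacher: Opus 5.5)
Your outline follows the argument behind the cited result (Steenrod, \S 11): lift chart-wise on strips of $[0,1]$, pass to a countable subcover, and glue with $\tau_r=\max(u_1,\dots,u_r)$. Step 3 is sound in substance. Note, though, that $\tau_{r-1}(x)$ falls in different strips for different $x$. So on the $i$-th strip you should freeze the fibre coordinate at time $\max(\tau_{r-1}(x),t_{i-1})$, paste the finitely many closed pieces over $V_r$, and set $H_r=H_{r-1}$ off the closed set $\operatorname{supp}u_r$. The genuine gap is the limiting step. You name it but do not close it, and the justification you offer for closing it is not the right one.

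With only $\max_r u_r=1$ pointwise, $\bigcup_r H_r$ is defined everywhere. It is continuous at points $(x_0,t_0)$ with $t_0<1$: pick $R$ with $\tau_R(x_0)=1$; then $\tau_R>t_0$ near $x_0$, so $H=H_R$ on a neighbourhood. Near $(x_0,1)$, however, points may receive their values at stages $r\to\infty$, in charts over sets $V_r$ not containing $x_0$, and nothing in your construction controls them.

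Nor does countability by itself give local finiteness. Steenrod additionally assumes $X$ normal and locally compact; the statement as quoted in the paper drops both. It is local compactness together with the countability condition, via an exhaustion by compact sets, that yields a countable locally finite cover. Alternatively, a normal Hausdorff Lindel\"of space is regular and Lindel\"of, hence paracompact.

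The simplest repair avoids local finiteness entirely. Shrink $\{V_r\}$ to an open cover $\{W_r\}$ with $\overline{W_r}\subseteq V_r$. This shrinking is also what Step 2 tacitly needs, and it exists by paracompactness. Then choose $u_r$ with $u_r\equiv 1$ on $\overline{W_r}$ and $\operatorname{supp}u_r\subseteq V_r$. For $x\in W_R$ and every $r\ge R$ one has $\tau_r(x)=1$, so $W_R\times[0,1]\subseteq X_R$ and no later stage alters $H_R$ there. Hence $H=H_R$ on the open set $W_R\times[0,1]$, and continuity of $H$ follows at once.
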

    \begin{theorem}[See{\cite[Section 17]{Steenrod}}]\label{Thm2.2}
         Let $p:B\to X$ be a fiber bundle with fiber $F$. Then there is a long exact sequence of homotopy groups:
         \begin{equation}
         \xymatrix{
         \dots \ar[r]^{\partial_{i}} & \pi_{i}(F) \ar[r]^{l_{*}} & \pi_{i}(B) \ar[r]^{p_{*}} & \pi_{i}(X) \ar[r]^-{\partial_{i-1}} & \pi_{i-1}(F) \ar[r]^-{l_{*}} & \dots ~~~\text{for each $i\ge 1$}
         }.
         \end{equation}
    \end{theorem}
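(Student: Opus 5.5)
We derive the sequence from the long exact homotopy sequence of the pair $(B,F)$, where $F=p^{-1}(x_0)$ is the fiber over a basepoint $x_0\in X$ and $b_0\in F$ is a basepoint. Recall that for any pointed pair $(B,F,b_0)$ there is a natural long exact sequence
\begin{equation*}
\cdots\to\pi_i(F)\xrightarrow{l_*}\pi_i(B)\xrightarrow{j_*}\pi_i(B,F)\xrightarrow{\partial}\pi_{i-1}(F)\to\cdots .
\end{equation*}
This is purely formal, coming from the definitions of relative homotopy groups, and requires no bundle structure. The bundle structure enters only through one claim.

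\textbf{Key claim.} For every $i\ge 1$, the map $p_*:\pi_i(B,F,b_0)\to\pi_i(X,x_0)$ is a bijection. Granting this, we set
\begin{equation*}
\partial_{i-1}:=\partial\circ(p_*)^{-1},
\end{equation*}
and replace $\pi_i(B,F)$ by $\pi_i(X)$ in the sequence of the pair. Since $p\circ j$ is the map induced by $p$ on absolute groups, this yields exactly the sequence in Theorem~\ref{Thm2.2}. At the low end ($i=1$, with $\pi_0$ terms), exactness is read in the sense of pointed sets.

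\textbf{Surjectivity.} Represent a class in $\pi_i(X,x_0)$ by a map $h:(I^i,\partial I^i)\to(X,x_0)$. Write $I^i=I^{i-1}\times[0,1]$ and regard $h$ as a homotopy of maps $I^{i-1}\to X$. Let $J^{i-1}=\partial I^{i-1}\times I\cup I^{i-1}\times\{1\}$.
\begin{enumerate}[(a)]
\item Apply the Covering Homotopy Theorem (Theorem~\ref{the:Covering Homotopy Theorem}) in its relative form: a lift already prescribed on a subcomplex extends. Here we take the constant lift $b_0$ on $J^{i-1}$; equivalently, reparametrise $(I^{i-1}\times I, J^{i-1})\cong (I^{i-1}\times I, I^{i-1}\times\{0\})$ and apply the theorem with $H_0\equiv b_0$.
\item The resulting lift $H:I^i\to B$ satisfies $p\circ H=h$. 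It sends $J^{i-1}$ to $b_0$, and it sends the remaining face $I^{i-1}\times\{0\}$ into $p^{-1}(x_0)=F$.
\item Hence $H$ represents an element of $\pi_i(B,F,b_0)$ whose image under $p_*$ is $[h]$.
\end{enumerate}

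\textbf{Injectivity.} Suppose $p\circ H_0$ and $p\circ H_1$ are homotopic rel boundary via some $G:I^i\times I\to X$. We lift $G$ again by the relative Covering Homotopy Theorem, starting from the lift $H_0\cup H_1\cup(\text{constant})$ that is already given on the appropriate part of $\partial(I^i\times I)$. This produces a homotopy of pairs between $H_0$ and $H_1$. Alternatively, it suffices to show that the kernel is trivial and then use the group structure for $i\ge2$, handling $i=1$ separately with pointed sets.

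\textbf{Main obstacle.} The only real work is justifying the \emph{relative} version of the covering homotopy property, that is, lifting with a lift prescribed on $I^{k}\times\{0\}\cup\partial I^k\times I$. Theorem~\ref{the:Covering Homotopy Theorem} as stated prescribes the lift only at time $0$. I would handle this via the homeomorphism of pairs $(I^k\times I,\ I^k\times 0\cup\partial I^k\times I)\cong(I^k\times I,\ I^k\times 0)$, which transports the relative problem into the absolute one covered by the theorem. The cube $I^k$ is compact metric, so the countability hypothesis of the theorem holds. With this in hand, exactness at each spot is inherited directly from the sequence of the pair, and naturality of $\partial$ gives the stated maps.
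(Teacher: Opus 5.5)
Your proof is correct. It is the standard argument of the cited source (Steenrod, \S 17): take the homotopy sequence of the pair $(B,F)$ and replace $\pi_i(B,F,b_0)$ by $\pi_i(X,x_0)$ via the bijection $p_*$, which comes from the covering homotopy theorem. The paper quotes this result without reproducing a proof. One point is left implicit: in the injectivity step the prescribed set $I^i\times\{0,1\}\cup J^{i-1}\times I$ is the union of all faces of $I^{i+1}$ except $I^{i-1}\times\{0\}\times I$, so the same homeomorphism-of-pairs trick reduces that lifting problem to the absolute Covering Homotopy Theorem.
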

    The following result on the reduction of structure groups will be crucial in Section \ref{sec: Pri}: 
    
   \begin{theorem}[Reduction Theorem {\cite[Chapter 2, Section 7]{KN63}}]\label{Red Thm}Given a smooth fiber bundle $M$ over a connected base $N$ with a finite-dimensional structure Lie group $G$, let $\Gamma$ be a connection on $M$. Then the structure group $G$ can be reduced to the holonomy group of $\Gamma$.
   \end{theorem}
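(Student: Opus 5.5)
The statement is the Ambrose--Singer/Kobayashi--Nomizu reduction theorem, so I would prove it with the holonomy bundle construction, working at the level of the associated principal $G$-bundle. First, replace the fiber bundle $M\to N$ with structure group $G$ by its associated principal $G$-bundle $P\to N$. Then $M\cong P\times_G F$, and a connection $\Gamma$ on $M$ with structure group $G$ is by definition induced from a connection on $P$, which we still denote $\Gamma$. Reducing the structure group of $M$ to a subgroup $H<G$ is the same as reducing $P$ to a principal $H$-subbundle $Q\subseteq P$, since then $M\cong Q\times_H F$. So it suffices to produce such a $Q$ with group the holonomy group.

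Fix $u_0\in P$ over $x_0\in N$. Define the holonomy bundle
\[
Q:=\{u\in P:\ u \text{ is joined to } u_0 \text{ by a piecewise smooth horizontal curve}\},
\]
and let $\Phi(u_0):=\{a\in G: u_0a\in Q\}$ be the holonomy group at $u_0$. The key steps, in order, are as follows.

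\textbf{(i)} $\Phi(u_0)$ is a subgroup of $G$. This holds because horizontal curves are carried to horizontal curves by the right action, $R_a$, of $G$, and horizontal curves can be concatenated and reversed.

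\textbf{(ii)} $Q$ projects onto $N$. Any point of $N$ is joined to $x_0$ by a piecewise smooth curve, since $N$ is connected. The horizontal lift starting at $u_0$, which exists by ODE theory (or, topologically, by Theorem~\ref{the:Covering Homotopy Theorem}), lands in the fiber over that point.

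\textbf{(iii)} $Q$ is invariant under $\Phi(u_0)$, and $\Phi(u_0)$ acts simply transitively on each fiber $Q\cap\pi^{-1}(x)$. If $u,ua\in Q$ lie in the same fiber, then transporting back to $u_0$ by $R$-equivariance shows $a\in\Phi(u_0)$, up to conjugation by the element relating $u$ and $u_0$, which is trivial by the choice of base point.

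\textbf{(iv)} $\Phi(u_0)$ is a Lie subgroup and $Q$ is a smooth principal $\Phi(u_0)$-subbundle. For this, I would show that $\Phi(u_0)$ is arcwise connected in its identity component; Yamabe's theorem then makes the restricted holonomy group $\Phi^0(u_0)$ a connected Lie subgroup. Moreover, $\Phi(u_0)/\Phi^0(u_0)$ is countable, being a quotient of $\pi_1(N)$, which is countable for a manifold. Local triviality of $Q$ comes from local sections built by horizontally lifting radial segments in a normal coordinate ball around each point: the map $s(x)=$ the lift of the segment from the center to $x$ lands in $Q$ and is smooth. The trivializations $U\times\Phi(u_0)\to Q$, $(x,a)\mapsto s(x)a$, then have transition functions in $\Phi(u_0)$.

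Finally, $Q\hookrightarrow P$ is a reduced subbundle, so $M\cong Q\times_{\Phi(u_0)}F$, and the structure group of $M$ reduces to the holonomy group. Changing $u_0$ only conjugates $\Phi(u_0)$, so the reduction is well defined up to conjugacy. I expect step (iv) to be the main obstacle, namely the smoothness of the holonomy group and of $Q$. I would handle it via the Yamabe/Freudenthal lemma on arcwise-connected subgroups, as in Kobayashi--Nomizu, rather than reprove it.
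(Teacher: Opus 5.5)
Your proposal is correct and is essentially the holonomy-bundle proof of Kobayashi--Nomizu \cite{KN63} (Chapter II, Sections 4 and 7); the paper cites that proof and does not reproduce it. Two minor fixes: first, the Covering Homotopy Theorem yields only some lift, not a horizontal one, so global existence of horizontal lifts must come from the right-invariant ODE on $G$ in a local trivialization. Second, in step (iii) no conjugation arises at all, since for every $u\in Q$ the horizontal connection $u\sim u_0$ already gives $\Phi(u)=\Phi(u_0)$.
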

   An immediate consequence is:
   \begin{corollary}\label{Red Thm:cor}
     Every element of the minimal structure group of a smooth fiber bundle with a finite-dimensional structure Lie group can be realized in the structure group of the pullback bundle over a loop in $N$.
   \end{corollary}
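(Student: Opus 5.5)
The corollary follows from the Reduction Theorem (Theorem~\ref{Red Thm}) together with the standard description of holonomy by parallel transport along loops. Write $f:M\to N$ for the bundle with structure Lie group $G$, and fix a connection $\Gamma$ on the associated principal $G$-bundle $P\to N$. Such a connection exists because $N$ is paracompact: glue local flat connections with a partition of unity. Fix a base point $u_0\in P$ over $x_0\in N$. By Theorem~\ref{Red Thm}, the structure group reduces to the holonomy group $\Phi(u_0)<G$. The minimal structure group $G_{\min}$ is contained, up to conjugation, in every structure group to which $G$ reduces. Hence every element $g\in G_{\min}$ may be regarded, after conjugation, as an element of $\Phi(u_0)$.

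The next step is to read off what membership in $\Phi(u_0)$ means. By definition, $g\in\Phi(u_0)$ if and only if there is a piecewise smooth loop $\gamma:[0,1]\to N$ based at $x_0$ whose horizontal lift $\tilde\gamma$ starting at $u_0$ ends at $u_0 g$. Pull the bundle back along $\gamma$ (equivalently along the induced map $S^1\to N$) to get $\gamma^*M\to[0,1]$, with the endpoint fibres identified to give a bundle over $S^1$. The horizontal lift $\tilde\gamma$ trivializes $\gamma^*P$ over the open interval. Gluing the two ends over $x_0$ then requires exactly the transition function $g$. So the clutching function, which serves as a structure-group element of the pullback bundle over the loop, is $g$. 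This is precisely the assertion that $g$ is \emph{realized in the structure group of the pullback bundle over a loop in $N$}.

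I expect the main obstacle to be the step ``every element of the minimal structure group lies in the holonomy group,'' because minimality is only defined up to conjugacy and holonomy depends on the choice of $\Gamma$. I would handle this by applying the reduction to a reduced bundle. Reduce first to a bundle with structure group $G_{\min}$, choose $\Gamma$ as a $G_{\min}$-connection on that reduced bundle, and apply Theorem~\ref{Red Thm} again. This yields a further reduction to $\Phi(u_0)\subseteq G_{\min}$, and minimality then forces $\Phi(u_0)=G_{\min}$ up to conjugation. Finally, conjugating $g$ corresponds to changing the base point $u_0$ within its fibre, so the loop realization survives that conjugation.
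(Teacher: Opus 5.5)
Your proposal is correct and is essentially the argument the paper intends. The paper states the corollary as an immediate consequence of Theorem~\ref{Red Thm}, i.e.\ the structure group reduces to the holonomy group, and each holonomy element is the clutching element of the pullback along the loop whose horizontal lift produces it. Your third paragraph, where you choose the connection on an already-minimal reduction so that $\Phi(u_0)=G_{\min}$, supplies detail the paper leaves implicit. It should replace your first-paragraph assertion that $G_{\min}$ lies up to conjugacy in every reduction, which you have not justified and do not need.
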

   
    We also require the standard lifting criterion for covering spaces.

    \begin{proposition}[Lifting Criterion {\cite[Chapter 1 section 1.3]{Hatcher}}]\label{prop: lifting cri}
   Given a covering space $p:(\widetilde{X},\widetilde{x}_{0})\to(X,x_{0})$ and a map $F:(Y,y_{0})\to(X,x_{0})$ with $Y$ path-connected and locally path-connected. Then a lift $\widetilde{F}:(Y,y_{0})\to (\widetilde{X},\widetilde{x}_{0})$ of $F$ exists if and only if 
    $F_{*}(\pi_{1}(Y,y_{0}))\subseteq p_{*}(\pi_{1}(\widetilde{X},\widetilde{x}_{0}))$.      
    \end{proposition}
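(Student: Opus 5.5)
This is the standard lifting criterion for covering spaces, so the plan is to prove both directions directly from path lifting and homotopy lifting for the covering $p$. Throughout, recall that $p_*:\pi_1(\widetilde{X},\widetilde{x}_0)\to\pi_1(X,x_0)$ is injective; injectivity is not actually needed, but the homotopy lifting property behind it is.

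\emph{Necessity.} Suppose a lift $\widetilde{F}$ exists, so $p\circ\widetilde{F}=F$. Functoriality of $\pi_1$ gives
\[
F_*(\pi_1(Y,y_0))=p_*\widetilde{F}_*(\pi_1(Y,y_0))\subseteq p_*(\pi_1(\widetilde{X},\widetilde{x}_0)).
\]

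\emph{Sufficiency: construction.} For $y\in Y$, use path-connectedness to pick a path $\gamma$ from $y_0$ to $y$. By unique path lifting, $F\circ\gamma$ has a unique lift $\widetilde{F\circ\gamma}$ starting at $\widetilde{x}_0$. Define
\[
\widetilde{F}(y):=\widetilde{F\circ\gamma}(1).
\]
Then $p\circ\widetilde{F}=F$ and $\widetilde{F}(y_0)=\widetilde{x}_0$ hold by construction.

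\emph{Sufficiency: well-definedness.} Let $\gamma'$ be another path from $y_0$ to $y$. The loop $h_0=(F\circ\gamma')\cdot\overline{(F\circ\gamma)}$ represents a class in $F_*(\pi_1(Y,y_0))$. By hypothesis this class equals $p_*[\widetilde{h}]$ for some loop $\widetilde{h}$ at $\widetilde{x}_0$. So $h_0$ is homotopic to $p\circ\widetilde{h}$. Lifting this homotopy via the homotopy lifting property shows that the lift of $h_0$ at $\widetilde{x}_0$ is a closed loop. Hence the lift of $F\circ\gamma'$ and the lift of $F\circ\gamma$ end at the same point, and $\widetilde{F}$ is well-defined.

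\emph{Sufficiency: continuity.} This is the main obstacle, and it is where local path-connectedness is used. Fix $y$ and an open neighborhood $\widetilde{U}$ of $\widetilde{F}(y)$. Shrinking if necessary, take $U$ to be an evenly covered neighborhood of $F(y)$ and $\widetilde{U}$ the sheet over $U$ containing $\widetilde{F}(y)$, so that $p|_{\widetilde{U}}:\widetilde{U}\to U$ is a homeomorphism. Choose a path-connected neighborhood $V$ of $y$ with $F(V)\subseteq U$. For $y'\in V$, compute $\widetilde{F}(y')$ using $\gamma$ followed by a path $\eta$ in $V$ from $y$ to $y'$. The lift of $F\circ\eta$ starting at $\widetilde{F}(y)$ is $(p|_{\widetilde{U}})^{-1}\circ F\circ\eta$, which stays in $\widetilde{U}$. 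Therefore
\[
\widetilde{F}|_V=(p|_{\widetilde{U}})^{-1}\circ F|_V,
\]
which is continuous.
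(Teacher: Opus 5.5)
Your proof is correct and follows the standard argument of Hatcher's Proposition 1.33; the paper does not prove this statement and simply cites it from \cite{Hatcher}. You prove necessity by functoriality, and sufficiency by defining $\widetilde{F}$ via path lifting, checking well-definedness with homotopy lifting, and getting continuity from an evenly covered neighborhood plus local path-connectedness; note that the initial ``fix an open neighborhood $\widetilde{U}$ of $\widetilde{F}(y)$ and shrink'' step is unnecessary, since $\widetilde{F}|_V=(p|_{\widetilde{U}})^{-1}\circ F|_V$ on the open set $V$ already gives continuity at $y$.
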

   Next, we briefly review the theory of normal covering spaces, this property will be frequently used in Section \ref{sec: Comp}.
     \begin{definition}[See {\cite[Chapter 1 section 1.3]{Hatcher}}]A covering space $p:\widetilde{X}\to X$ is called \textbf{normal}, if for each $x\in X$ and any $\widetilde{x},\widetilde{x}'\in p^{-1}(x)$, there exists an $\alpha\in \Deck(\widetilde{X}/X)$ taking $\widetilde{x}$ to $\widetilde{x}'$. 
     \end{definition}  
     
    \begin{property}[See {\cite[Chapter 1 section 1.3]{Hatcher}}]\label{property: normal}
        \begin{enumerate}[(1)]
            \item Given a covering space $p:\widetilde{X}\to X$ over a path-connected, locally path-connected space $X$. If $\widetilde{X}$ is also path-connected,  then:
            \begin{enumerate}
                \item This covering space is normal if and only if $p_{*}(\pi_{1}(\widetilde{X}))$ is a normal subgroup of $\pi_{1}(X)$;
                \item If $\widetilde{X}$ is normal, then $\Deck(\widetilde{X}/X)\cong \frac{\pi_{1}(X)}{p_{*}(\pi_{1}(\widetilde{X}))}$.
            \end{enumerate}
            \item  Suppose a group $G$ acts freely and properly discontinuously on a path-connected and locally path-connected space $Y$. Then:
            \begin{enumerate}
                \item The quotient map $p:Y\to Y/G$, $p(y)=Gy$ is a normal covering;
                \item $G\cong\Deck(Y/(Y/G))\cong\frac{\pi_{1}(Y/G)}{p_{*}(\pi_{1}(Y))}$.
            \end{enumerate}
        \end{enumerate}
    \end{property}
\subsection{Obstruction Theory and Euler Class}
   
    In this subsection, we summarize the key ideas of obstruction theory needed to define the Euler class.
   
    Let $X$ be a CW complex and $Y$ a topological space. Suppose a map $f^{l}:X^{l}\to Y$ is defined on the $l$-skeleton. To extend $f^{l}$ to the $(l+1)$-skeleton, we consider each $(l+1)$-cell $e^{l+1}_{\alpha}:D^{l+1}_{\alpha}\to X$. The obstruction to extending over this cell is measured by the composition $f^{l}\circ e^{l+1}_{\alpha}|_{\partial D_{\alpha}^{l+1}}:\partial D^{l+1} \to Y$, which defines an element of $\pi_{l}(Y)$. 

     Let $X^{l+1}=\{e^{l+1}_{\alpha}\}_{\alpha\in \Lambda_{l+1}}$. The obstruction cochain $\mathcal{O}(f^{l})\in C^{l+1}(X;\pi_{l}(Y))$ is defined by
     \begin{equation}
         \mathcal{O}(f^{l})(e^{l+1}_{\alpha}):=[f^{l}\circ e^{l+1}_{\alpha}|_{\partial D^{l+1}_{\alpha}}]\in \pi_{l}(Y).
     \end{equation}

       \begin{lemma}[See \cite{Steenrod}]\label{lem2.8}The obstruction cochain $\mathcal{O}(f^{l})$ is a cocycle. Moreover, if $g^{l}:X^{l}\to Y$ is another map agreeing with $f^{l}$ on the $(l-1)$-skeleton, then $\mathcal{O}(f^{l})-\mathcal{O}(g^{l})$ is a coboundary. Conversely, for any coboundary $dc$, there exists a map $g^{l}:X^{l}\to Y$ such that $\mathcal{O}(f^{l})-\mathcal{O}(g^{l})=dc$.
       \end{lemma}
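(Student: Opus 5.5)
We prove the three assertions of Lemma~\ref{lem2.8} in turn, working cell by cell and using the identification of $\pi_{l}(Y)$-valued cellular cochains with homomorphisms $C_{l+1}(X)\to\pi_{l}(Y)$. Throughout we assume, as is implicit in obstruction theory, that $Y$ is $l$-simple, e.g.\ $\pi_1(Y)$ acts trivially on $\pi_l(Y)$ and $l\ge 1$, or $Y$ is simply connected. This makes $[f^{l}\circ e^{l+1}_{\alpha}|_{\partial D}]$ a well-defined element of $\pi_l(Y)$ independent of basepoints.

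\emph{Step 1: $\mathcal{O}(f^{l})$ is a cocycle.} Fix an $(l+2)$-cell $e^{l+2}_{\beta}$. We need $\sum_{\alpha}[e^{l+2}_{\beta}:e^{l+1}_{\alpha}]\,\mathcal{O}(f^{l})(e^{l+1}_{\alpha})=0$.
\begin{enumerate}[(a)]
\item First extend $f^{l}$ to $X^{l}\cup(\text{interiors of the }(l+1)\text{-cells})$ minus a small open ball in each $(l+1)$-cell. This is possible radially, since a punctured disk retracts onto its boundary.
\item Restricted to a small sphere around the puncture in $e^{l+1}_{\alpha}$, the extended map represents $\mathcal{O}(f^{l})(e^{l+1}_{\alpha})$.
\item Compose with the attaching map $\varphi_\beta:S^{l+1}\to X^{l+1}$. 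After a cellular/transversality homotopy, $\varphi_\beta$ hits each puncture in finitely many points with signs whose sum is the incidence number $[e^{l+2}_{\beta}:e^{l+1}_{\alpha}]$.
\item The composite map is defined on $S^{l+1}$ minus finitely many balls, so the sum of the classes on the boundary spheres is zero in $\pi_l(Y)$.
\end{enumerate}
Equivalently, the map factors through $\pi_{l+1}(X^{l+1},X^{l})\cong H_{l+1}(X^{l+1},X^{l})$, via the Hurewicz isomorphism in the $l$-simple setting. The cocycle condition is then $\mathcal{O}(f^l)\circ\partial=0$. This holds because $\partial e^{l+2}_\beta$ maps to $\pi_{l+1}(X^{l+1},X^l)$ as the image of $[\varphi_\beta]\in\pi_{l+1}(X^{l+1})$, and $f^l_*\circ\partial\circ j=0$ by exactness of the pair sequence. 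This homological formulation is the cleanest route, and we will write the proof that way.

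\emph{Step 2: the difference is a coboundary.} Since $f^{l}$ and $g^{l}$ agree on $X^{l-1}$, define the difference cochain $d(f^{l},g^{l})\in C^{l}(X;\pi_{l}(Y))$. On an $l$-cell $e^{l}_{\gamma}$, glue $f^{l}\circ e^{l}_{\gamma}$ and $g^{l}\circ e^{l}_{\gamma}$ along their common boundary to obtain a map $S^{l}\to Y$, and take its class. The standard computation on $\partial(D^{l+1}\times I)$ then gives
\[
\mathcal{O}(f^{l})-\mathcal{O}(g^{l})=\pm\,\delta\, d(f^{l},g^{l}).
\]
Concretely, apply the homotopy-addition argument of Step 1 to $X\times I$ with the map $f^l$ on $X\times 0$, $g^l$ on $X\times 1$, and the stationary map on $X^{l-1}\times I$. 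This yields the desired coboundary.

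\emph{Step 3: realizing any coboundary.} Given $c\in C^{l}(X;\pi_{l}(Y))$, construct $g^{l}$ as follows.
\begin{enumerate}[(a)]
\item Set $g^{l}=f^{l}$ on $X^{l-1}$, and outside a small ball in each $l$-cell.
\item Inside the small ball in $e^{l}_{\gamma}$, pinch off a sphere and insert a representative of $-c(e^{l}_{\gamma})$.
\end{enumerate}
By construction $d(f^{l},g^{l})=c$, up to sign convention, and Step 2 gives $\mathcal{O}(f^{l})-\mathcal{O}(g^{l})=dc$.

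The main obstacle is Step 1's sign and basepoint bookkeeping, which identifies the pinching degrees with the cellular incidence numbers. We handle it by citing the Hurewicz identification $\pi_{l+1}(X^{l+1},X^{l})\cong C_{l+1}(X)$ (relative Hurewicz, valid for $l\ge1$ up to the $\pi_1$-action, which is trivialized by simplicity) and naturality of the boundary maps, rather than by explicit degree computations.
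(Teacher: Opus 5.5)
Your proposal is correct and is the standard obstruction-theory argument that the paper's citation to Steenrod refers to; the paper gives no proof of its own. The three steps match that argument: the cocycle property from $\partial\circ j=0$ in the exact sequence of the pair $(X^{l+1},X^{l})$, the identity $\mathcal{O}(f^{l})-\mathcal{O}(g^{l})=\pm\delta\, d(f^{l},g^{l})$ from the cocycle property on $X\times I$, and realization of an arbitrary difference cochain by pinching inside the $l$-cells.

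One phrasing should be fixed. The map $\pi_{l+1}(X^{l+1},X^{l})\to H_{l+1}(X^{l+1},X^{l})$ is not an isomorphism unless $X^{l}$ is simply connected, and this fails in the case $l=1$ used for the Euler class, where $X^{1}$ is a graph. Relative Hurewicz identifies $H_{l+1}(X^{l+1},X^{l})$ with the quotient of $\pi_{l+1}(X^{l+1},X^{l})$ by the $\pi_1(X^{l})$-action. The $l$-simplicity of $Y$ does not trivialize that action; it only makes $f^{l}_{*}\circ\partial$ invariant under it. That invariance is exactly what you need for the composite to descend to $H_{l+1}(X^{l+1},X^{l})$, so the argument goes through once the isomorphism claim is replaced by this factorization.
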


       Thus, the obstruction cochain defines a cohomology class $[\mathcal{O}(f^{l})]\in H^{l+1}(X;\pi_{l}(Y))$, which vanishes if and only if $f^{l}$ can be extended on the $(l+1)$-skeleton. 

\begin{definition}
      The class $[\mathcal{O}(f^{l})]$ is called the $l$-skeleton obstruction class. 
\end{definition}

      We now apply this framework to the problem of constructing sections of fiber bundles. Let $p:P\to X$ be a fiber bundle over a CW complex $X$ with fiber $F$. Suppose $\sigma^{l}:X^{l}\to P$ is a section defined on the $l$-skeleton. For an $(l+1)$-cell $e^{l+1}_{\alpha}:D^{l+1}_{\alpha}\to X$, the composition $\sigma ^{l}\circ e^{l+1}_{\alpha}|_{\partial D^{l+1}_{\alpha}}$ gives a map into $P$ whose projection to $X$ is $e^{l+1}_{\alpha}|_{\partial D^{l+1}_{\alpha}}$.
      Using a local trivialization of the bundle over $D^{l+1}_{\alpha}$, this map can be projected to the fiber
      $F$, defining an element of $\pi_{l}(F)$, since the attaching map is null-homotopic in $X$. Thus, the obstruction to extending the section lies in $H^{l+1}(X;\pi_{l}(F))$.
      
      We now define the Euler class via obstruction theory. Consider a principal $S^{1}$-bundle $P$ over a CW complex $X$. Constructing a section skeleton-wise, the primary obstruction arises when extending from the 1-skeleton to the 2-skeleton.

       Suppose we have constructed the $l$-skeleton section $\sigma^{l}:X^{l}\to P$. We now want to extend it to an $(l+1)$-skeleton section using the Covering Homotopy Theorem~\ref{the:Covering Homotopy Theorem}:
        For each $(l+1)$-cell $e^{l+1}_{\alpha}:D^{l+1}_{\alpha}\to X$, we divide into two cases:
\begin{enumerate}[1.]
    \item When $l\ne1$, there is an extension $\sigma_{\alpha}^{l+1}:D^{l+1}_{\alpha}\to P$ of $(\sigma^{l}\circ e^{l+1}_{\alpha}|_{\partial D^{l+1}_{\alpha}})$;
    \item When $l=1$, there is an extension $\sigma_{\alpha}^{l+1}:D^{l+1}_{\alpha}\to P$ of $(\sigma^{l}\circ e^{l+1}_{\alpha}|_{\partial D^{l+1}_{\alpha}})$ if and only if $\im((\sigma^{l}\circ e^{l+1}_{\alpha}|_{\partial D^{l+1}_{\alpha}})_{*})=\{e\}$, where $(\sigma^{l}\circ e^{l+1}_{\alpha}|_{\partial D^{l+1}_{\alpha}})_{*}:\pi_{1}(\partial D^{l+1}_{\alpha})\to \pi_{1}(P)$ is the induced homomorphism on fundamental groups. 
\end{enumerate}

 \begin{definition} For a principal $S^{1}$-bundle $P\to X$, the 
       \textbf{Euler class} is the obstruction class $C_{1}(P)=[\mathcal{O}(\sigma^{1})]\in H^{2}(X;\mathbb{Z})$ to extending a section $\sigma^{1}:X^{1}\to P$ from the 1-skeleton.
 \end{definition}

     This definition generalizes to principal torus bundles:
 \begin{definition} For a principal $T^{m-k}$-bundle $P\to X$, the 
        \textbf{Euler class} is the obstruction class $C_{1}(P)=[\mathcal{O}(\sigma^{1})]\in H^{2}(X;\mathbb{Z}^{m-k})$ to extending a section $\sigma^{1}:X^{1}\to P$ from the 1-skeleton.
 \end{definition}
 
    The classification of principal bundles in the topological category is given by the following classical result.
    
\begin{theorem}[Classification of Principal Bundles {\cite[Section 19.3]{Steenrod}}]\label{Thm:Classification 1}
    Let $\mathfrak{B}\to X$ be an $(n+1)$-universal $G$-bundle for the group $G$, and $K$ an $n$-complex. Then the map
    \begin{equation}
           [g]\mapsto [g^{-1}(\mathfrak{B})]
    \end{equation}
    induces a bijection between the set of homotopy classes $[K,X]$ and the set of isomorphism classes of principal $G$-bundles over $K$.
\end{theorem}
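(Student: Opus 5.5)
The plan is to translate principal $G$-bundles over $K$ and their isomorphisms into sections of an auxiliary fiber bundle with highly connected fiber, and then use the obstruction theory recalled above. Recall that $\mathfrak{B}\to X$ being $(n+1)$-universal means that the total space $\mathfrak{B}$ satisfies $\pi_{i}(\mathfrak{B})=0$ for $0\le i\le n$. The basic device is the following. For a principal $G$-bundle $P\to K$, consider the associated bundle
\[
E_P:=P\times_{G}\mathfrak{B}\longrightarrow K ,
\]
whose fiber is $\mathfrak{B}$. Sections of $E_P$ correspond bijectively to $G$-equivariant maps $\Phi:P\to\mathfrak{B}$. Each such $\Phi$ covers a map $g:K\to X$ and induces an isomorphism $P\cong g^{-1}(\mathfrak{B})$. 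I would carry out the argument in four steps.

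\emph{Well-definedness.} Let $g_0\simeq g_1$ via $H:K\times[0,1]\to X$. Apply the Covering Homotopy Theorem~\ref{the:Covering Homotopy Theorem} to the bundle $H^{-1}(\mathfrak{B})$ over $K\times[0,1]$, in its principal (equivariant) form. This gives a bundle isomorphism $H^{-1}(\mathfrak{B})\cong g_0^{-1}(\mathfrak{B})\times[0,1]$. Restricting to $K\times\{1\}$ yields $g_1^{-1}(\mathfrak{B})\cong g_0^{-1}(\mathfrak{B})$, so the map $[g]\mapsto[g^{-1}(\mathfrak{B})]$ is well defined on homotopy classes.

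\emph{Surjectivity.} Given $P$, I will build a section of $E_P$ skeleton by skeleton over the $n$-complex $K$. On the $0$-skeleton any choice of points works, since $\mathfrak{B}$ is path-connected. By the discussion preceding the definition of the Euler class, the obstruction to extending a section from the $l$-skeleton to the $(l+1)$-skeleton lies in $H^{l+1}(K;\pi_{l}(\mathfrak{B}))$. Here $l\le n-1$, so $\pi_l(\mathfrak{B})=0$. In fact the obstruction vanishes already cell by cell: each boundary map $S^{l}\to\mathfrak{B}$ (after local trivialization) is null-homotopic. The resulting section gives an equivariant $\Phi$, hence a classifying map $g$ with $P\cong g^{-1}(\mathfrak{B})$. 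One should check that the obstruction description holds for the non-principal bundle $E_P$ with possibly nonsimple fiber. This causes no difficulty, because all the relevant homotopy groups vanish, so no local-coefficient subtleties arise.

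\emph{Injectivity.} This is the step I expect to be the main obstacle, and it is where the hypothesis ``$(n+1)$'' rather than ``$n$'' is used. Suppose $g_0^{-1}(\mathfrak{B})\cong g_1^{-1}(\mathfrak{B})$, and call this bundle $P$. The canonical maps to $\mathfrak{B}$ then give two equivariant maps $\Phi_0,\Phi_1:P\to\mathfrak{B}$, covering $g_0$ and $g_1$ respectively. These are two sections of $E_P$. Pull back to the bundle $E_{P\times[0,1]}$ over the $(n+1)$-complex $K\times[0,1]$, where we have a section on the subcomplex $K\times\{0,1\}$. The relative cells have the form $e\times(0,1)$ with $\dim\le n+1$. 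Extending a section over a relative $(l+1)$-cell meets an obstruction in $\pi_{l}(\mathfrak{B})$ with $l\le n$, and these groups vanish precisely by $(n+1)$-universality. The extended section is an equivariant map $P\times[0,1]\to\mathfrak{B}$. It covers a homotopy $K\times[0,1]\to X$ from $g_0$ to $g_1$, so $[g_0]=[g_1]$. The care needed here is in organising the relative obstruction argument, using Lemma~\ref{lem2.8} in its relative form, and in verifying that the induced map on base spaces is continuous. This holds because equivariant maps descend to $P/G=K$.
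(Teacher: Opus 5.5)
The paper gives no argument of its own here (it only cites Steenrod, Section 19.3), and your proposal is correct and is essentially Steenrod's proof. You identify equivariant maps $P\to\mathfrak{B}$ with cross-sections of $P\times_G\mathfrak{B}$, get existence by cell-by-cell extension over $K$ using $\pi_i(\mathfrak{B})=0$ for $i\le n-1$, and get uniqueness up to homotopy by extending over $K\times[0,1]$ relative to $K\times\{0,1\}$. That last step is exactly where $\pi_n(\mathfrak{B})=0$, i.e.\ $(n+1)$-universality, is needed.

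Two points are worth making explicit. Well-definedness uses the bundle-map (equivariant) version of the covering homotopy theorem, not the path-lifting form recorded as Theorem~\ref{the:Covering Homotopy Theorem}. Also, bundle isomorphisms must be taken over $\id_K$, as your injectivity step implicitly assumes.
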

       Let $X=K(\mathbb{Z}^{m-k},2)(\cong \prod_{i=1}^{m-k} K(\mathbb{Z},2)\cong \prod_{i=1}^{m-k}\mathbb{C}\mathbb{P}^{\infty})$ be an Eilenberg–MacLane space (see \cite[Section 4.2]{Hatcher}) and $K=N$. The universal $T^{m-k}$-bundle $\mathfrak{B}$ exists (indeed, $\mathfrak{B}\cong\prod_{i=1}^{m-k} S^{\infty}$), so Theorem \ref{Thm:Classification 1} applies. We identify the set of homotopy classes $[N,K(\mathbb{Z}^{m-k},2)]$ with a cohomology group via the following theorem.
\begin{theorem}[See {\cite[Section 4.3. Theorem 4.57]{Hatcher}}]
    There is a natural bijection
    \begin{equation}
        \begin{aligned}
            T:[X,K(G,n)]&\to H^{n}(X;G)\\
            [g]&\mapsto g^{*}(\alpha)
        \end{aligned}
    \end{equation} for any abelian group $G$ and certain distinguished class $\alpha\in H^{n}(K(G,n);G)$.
\end{theorem}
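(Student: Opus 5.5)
The statement is the representability of cohomology by Eilenberg–MacLane spaces. I would prove it by the obstruction theory recalled above (Lemma~\ref{lem2.8}), working with $X$ a CW complex, which covers the case $X=N$ used in the paper. First I fix the model and the class $\alpha$. Replacing $K(G,n)$ by a homotopy-equivalent CW complex, I may assume its $(n-1)$-skeleton is a point and its $n$-skeleton is a wedge of $n$-spheres. By Hurewicz, $H_{n}(K(G,n))\cong\pi_{n}(K(G,n))\cong G$, and $H_{n-1}=0$. The universal coefficient theorem then gives
\begin{equation}
H^{n}(K(G,n);G)\cong \mathrm{Hom}(G,G).
\end{equation}
I let $\alpha$ be the class corresponding to $\id_G$. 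Concretely, $\alpha$ is represented by the cellular cocycle sending each $n$-cell to the element of $\pi_n=G$ given by its characteristic map. Naturality of $T$ is immediate from functoriality of pullback, $(g\circ\phi)^{*}\alpha=\phi^{*}g^{*}\alpha$, and $T$ is well defined on homotopy classes by homotopy invariance.

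\textbf{Surjectivity.} Let $u\in H^{n}(X;G)$ be represented by a cellular cocycle $c\in C^{n}(X;G)$. I define $g$ on $X^{n}$ by sending $X^{n-1}$ to the basepoint. On each $n$-cell $e^{n}_\beta$, the map collapses the boundary and represents $c(e^{n}_\beta)\in G=\pi_n(K(G,n))$ on the resulting sphere. The obstruction cochain $\mathcal{O}(g^{n})$ to extending over an $(n+1)$-cell is the image of its attaching map in $\pi_n$. Via the cellular boundary formula this image equals $(\delta c)(e^{n+1})$, which is $0$ because $c$ is a cocycle. Hence $g$ extends to $X^{n+1}$. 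All higher obstructions lie in $H^{i+1}(X;\pi_i(K(G,n)))$ with $i>n$, and these groups are zero, so $g$ extends to all of $X$. By construction the cellular cochain $g^{\#}(\alpha)$ is $c$, so $T([g])=u$.

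\textbf{Injectivity.} Suppose $g_0^{*}\alpha=g_1^{*}\alpha$. By cellular approximation I may assume both maps send $X^{n-1}$ to the basepoint. I then try to extend $g_0\sqcup g_1$ from $X\times\partial I$ to $X\times I$, using the relative CW structure whose cells are the products $e\times(0,1)$. On $X^{n-1}\times I$ I use the constant map. Extending over the cells $e^{n}\times I$ is then unobstructed, since $\pi_{n}$ is the first nontrivial group. The choices made there change the next obstruction by an arbitrary coboundary, by Lemma~\ref{lem2.8}. The primary obstruction on the cells $e^{n}\times I$ is the difference cochain $d(g_0,g_1)=g_1^{\#}\alpha-g_0^{\#}\alpha\in C^{n}(X;G)$, and by hypothesis this cochain is a coboundary. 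So after readjusting on $X^{n-1}\times I$ the obstruction vanishes, and the remaining obstructions lie in groups with coefficients $\pi_i=0$ for $i>n$. This yields a homotopy $g_0\simeq g_1$.

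The main obstacle is the identification used in both directions. I must show that the obstruction and difference cochains, which are defined homotopically via classes in $\pi_n$, coincide with the cellular cochains $\delta c$ and $g_1^{\#}\alpha-g_0^{\#}\alpha$. This requires the Hurewicz isomorphism $\pi_n\cong H_n$ for the relevant spheres and wedges, together with the cellular boundary formula given by degrees. I would verify this compatibility carefully on a single cell first, and then assemble the result cell by cell.
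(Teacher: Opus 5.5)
The paper gives no proof of its own here; it quotes Hatcher's Theorem 4.57, and Hatcher's argument is structurally different from yours.

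\textbf{Comparison of routes.} Hatcher uses $K(G,n)\simeq\Omega K(G,n+1)$ to see that the Eilenberg--MacLane spaces form an $\Omega$-spectrum, so $X\mapsto\langle X,K(G,n)\rangle$ is a reduced cohomology theory on CW complexes. Its coefficients are those of $H^{*}(-;G)$, so the uniqueness theorem for cohomology theories satisfying the dimension axiom gives a natural isomorphism. Naturality (a Yoneda argument) then forces this isomorphism to be $[f]\mapsto f^{*}\alpha$, with $\alpha$ the image of the identity map of $K(G,n)$.

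You instead give the classical direct cellular proof. For surjectivity you realize a cocycle by a map; for injectivity you kill the difference cochain to build a homotopy.

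Hatcher's route buys generality: it works for any $\Omega$-spectrum, and it gives the group structure on $\langle X,K(G,n)\rangle$ and the additivity of $T$ for free. The price is the $\Omega$-spectrum theorem and a uniqueness theorem. Your route is elementary and treats the free homotopy classes $[X,K(G,n)]$ that Corollary~\ref{Cor:2.14} actually uses. It also matches the obstruction-theoretic description of the Euler class in Section~\ref{sec: Prel}.

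Its cost is exactly the compatibility you flag. To close it:
\begin{itemize}
\item Make $g$, $g_0$, $g_1$ cellular, so that $g(X^{n})\subset K^{n}$. This is possible because $\pi_n(K^{n})\to\pi_n(K)$ is onto. Then cellular cochains compute $g^{*}\alpha$, which is what you need to claim $g^{\#}\alpha=c$ ``by construction''.
\item For $n=1$, replace the Hurewicz isomorphism by the fact that any homomorphism from the free group $\pi_1$ of a wedge of circles to the abelian group $G$ factors through $H_1$.
\end{itemize}

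\textbf{A slip in the injectivity paragraph.} In the relative CW structure on $(X\times I,\,X\times\partial I)$, the cell $e^{n}\times(0,1)$ is $(n+1)$-dimensional and its boundary is an $n$-sphere. So extending over it is \emph{not} unobstructed; that is exactly where $d(g_0,g_1)\in C^{n}(X;G)$ sits.

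The unobstructed cells, and the source of your freedom, are the relative $n$-cells $e^{n-1}\times(0,1)$. Redefine the homotopy on them rel boundary by a cochain $b\in C^{n-1}(X;G)$, meaning an element of $\pi_n(K(G,n))$ on each such cell. This changes the obstruction on the $(n+1)$-cells by $\pm\delta b$. Choosing $b$ with $\pm\delta b=-d(g_0,g_1)$ kills the obstruction. The cells $e^{k}\times(0,1)$ with $k>n$ are then unobstructed, because $\pi_k(K(G,n))=0$.

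Your final sentence (``after readjusting on $X^{n-1}\times I$'') describes this correct readjustment, so only the indexing needs fixing. As written, though, the paragraph contradicts itself.
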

      When $G=\mathbb{Z}$ and $n=2$, the class $\alpha$ corresponds to the dual class of the exceptional divisor in $H_{2}(\mathbb{C}\mathbb{P}^{\infty};\mathbb{Z})$. The Euler class $C_{1}(g^{-1}(\mathfrak{B}))$ coincides with $g^{*}(\alpha)$ (see \cite[Chapter 4,5,9 and 14]{MS}).
      We therefore obtain:
     \begin{corollary}\label{Cor:2.14}
         The set of isomorphism classes of principal $T^{m-k}$-bundles over $N$ is in bijection with $H^{2}(N;\mathbb{Z}^{m-k})$. In particular, a principal $T^{m-k}$-bundle $M$ over $N$ is trivial if and only if its Euler class $C_{1}(M)$ vanishes.
     \end{corollary}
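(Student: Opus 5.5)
The statement to prove is Corollary~\ref{Cor:2.14}. The plan is to combine the two classification results just stated: Theorem~\ref{Thm:Classification 1} with $G=T^{m-k}$, and the natural bijection $T:[N,K(\mathbb{Z}^{m-k},2)]\to H^{2}(N;\mathbb{Z}^{m-k})$. I then identify the composite bijection with the Euler class $C_{1}$.

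First I will check that Theorem~\ref{Thm:Classification 1} applies. Since $N$ is a smooth closed manifold, it is homotopy equivalent to (indeed triangulable as) a finite $k$-complex. Take $n\ge k$. The bundle $\mathfrak{B}=\prod_{i=1}^{m-k}S^{\infty}\to\prod_{i=1}^{m-k}\mathbb{CP}^{\infty}$ is a principal $T^{m-k}$-bundle. Its total space is contractible, so all its homotopy groups vanish and it is $(n+1)$-universal for every $n$. Its base is a $K(\mathbb{Z}^{m-k},2)$, which follows from the long exact sequence of Theorem~\ref{Thm2.2}, since $\pi_{2}(B)\cong\pi_{1}(T^{m-k})=\mathbb{Z}^{m-k}$ and all other homotopy groups are zero. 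This gives a bijection between $[N,K(\mathbb{Z}^{m-k},2)]$ and isomorphism classes of principal $T^{m-k}$-bundles, via $[g]\mapsto[g^{-1}\mathfrak{B}]$. Composing its inverse with $T$ yields a bijection onto $H^{2}(N;\mathbb{Z}^{m-k})$, given by $M\mapsto g^{*}\alpha$, where $\alpha\in H^{2}(K(\mathbb{Z}^{m-k},2);\mathbb{Z}^{m-k})$ is the fundamental class.

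The key step, and the main obstacle, is showing $g^{*}\alpha=C_{1}(g^{-1}\mathfrak{B})$, so that the bijection is exactly the Euler class. I would argue via naturality of the obstruction class, reducing to the universal case.
\begin{enumerate}[(a)]
\item Naturality under pullback. The obstruction cocycle pulls back under cellular maps: a section of $\mathfrak{B}$ over the $1$-skeleton pulls back to a section of $g^{-1}\mathfrak{B}$ over the $1$-skeleton, and its obstruction cocycle is the pullback. Since the class is independent of choices by Lemma~\ref{lem2.8}, this gives $C_{1}(g^{-1}\mathfrak{B})=g^{*}C_{1}(\mathfrak{B})$.
\item Universal case. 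It suffices to check $C_{1}(\mathfrak{B})=\alpha$. Using the product structure, this reduces coordinatewise to $S^{\infty}\to\mathbb{CP}^{\infty}$, i.e.\ to the Hopf bundle $S^{3}\to\mathbb{CP}^{1}$ on the $2$-skeleton. Here the obstruction evaluated on the $2$-cell is the degree of the clutching map $\partial D^{2}\to S^{1}$, which is $\pm1$. This is a generator of $H^{2}(\mathbb{CP}^{\infty};\mathbb{Z})$, and the choice of $\alpha$ (equivalently, orientation conventions) makes it equal to $\alpha$. This is also the content cited from \cite{MS}.
\end{enumerate}

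The final claim follows from the bijection. Under the bijection the trivial bundle $N\times T^{m-k}$ corresponds to the constant map, whose pullback class is $0$. Directly, the trivial bundle has a global section, so every obstruction vanishes. Injectivity of the bijection then gives that $C_{1}(M)=0$ if and only if $M$ is isomorphic to the trivial bundle. If smooth rather than topological triviality is wanted, I would add that a topological isomorphism of smooth principal bundles can be smoothed by approximating the continuous section by a smooth one. This is standard, but it should be mentioned.
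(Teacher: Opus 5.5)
Your proposal is correct and follows the paper's route. You apply Theorem~\ref{Thm:Classification 1} to the universal bundle $\prod S^{\infty}\to K(\mathbb{Z}^{m-k},2)$, compose with the bijection $[N,K(\mathbb{Z}^{m-k},2)]\cong H^{2}(N;\mathbb{Z}^{m-k})$, and identify $g^{*}\alpha$ with $C_{1}(g^{-1}\mathfrak{B})$. The differences are that you sketch that last identification (naturality of the obstruction cocycle plus the Hopf-bundle computation) where the paper simply cites Milnor--Stasheff, and you settle the smooth-versus-topological issue by approximating sections, whereas the paper uses the appendix's Theorem~\ref{Thm: A.1}.
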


    \begin{remark}
        When $N$ is smooth, all principal $T^{m-k}$-bundles over $N$ in Corollary~\ref{Cor:2.14} can be taken in the smooth category, see \cite[Section 4.3 in Preliminaries]{Raghunathan} and Theorem \ref{Thm: A.1} in appendix.
    \end{remark}
   
	\section{Homotopic Exact Sequence of Torus Bundles}\label{sec: Homo}
        In this section, we analyze the homotopy exact sequences of torus bundles under the assumptions of Theorem~\ref{the: Structure Theorem}. We construct the covering spaces $\hat{M}$ and $\hat{N}$ and establish their key properties.
    
	\begin{theorem}\label{Thm3.1}
		Let $f:M\to N$ be a torus bundle with fiber $T^{m-k}$, where $M$ and $N$ are closed manifolds satisfying condition (\ref{eq:difference}). Then the following hold:
        \begin{enumerate}[(1)]
		\item The boundary maps $ \partial_{i}: \pi_{i+1}(N)\to \pi_{i}(T^{m-k})$ are trivial for all $i\ge1$. Consequently, the long homotopy exact sequence induced by $f$ yields isomorphisms $f_{*}:\pi_{i}(M)\cong \pi_{i}(N) $ for $ i\ge2 $, and a short exact sequence (\ref{se: short exact sequence (2)}) of fundamental groups;
		\item $l_{*}(\pi_{1}(T^{m-k}))\cap [\pi_{1}(M),\pi_{1}(M)]=\{e\} $;
		\item The image $l_{*}(\pi_{1}(T^{m-k}))$ lies in the center of $\pi_{1}(M)$.
		\end{enumerate}
	\end{theorem}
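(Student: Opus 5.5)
Everything rests on abelianizing the tail of the long exact sequence of Theorem~\ref{Thm2.2} and counting ranks. Write $r=m-k$ and start from
\[
\pi_2(N)\xrightarrow{\partial_1}\pi_1(T^r)\xrightarrow{l_*}\pi_1(M)\xrightarrow{f_*}\pi_1(N)\to\pi_0(T^r)=0 .
\]
Because $f_*$ is surjective, so is the induced map $H_1(M)\to H_1(N)$. Its kernel is generated by the image of $\bar l_*:H_1(T^r)\to H_1(M)$. The reason is that $H_1(N)=\pi_1(N)/[\pi_1(N),\pi_1(N)]$, $f_*$ maps commutators onto commutators, and $\ker f_*=\im l_*$.

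Tensoring with $\mathbb{Q}$ then gives the inequality
\[
b_1(M)-b_1(N)=\rank \bar l_*(H_1(T^r))\le r .
\]
Condition~(\ref{eq:difference}) forces equality, so $\bar l_*\otimes\mathbb{Q}$ is injective. Since $H_1(T^r)=\mathbb{Z}^r$ is torsion free, $\bar l_*$ itself is injective on $H_1(T^r)\cong\pi_1(T^r)$.

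Next consider the composite $h\circ l_*=\bar l_*\circ h''$, where $h''$ is an isomorphism. It is injective, so $l_*$ is injective, and $\ker(h\circ l_*)=0$. This gives (2), because $\ker h=[\pi_1(M),\pi_1(M)]$ and hence $l_*(\pi_1(T^r))\cap[\pi_1(M),\pi_1(M)]=\{e\}$. Injectivity of $l_*$ also gives $\partial_1=0$, by exactness at $\pi_1(T^r)$.

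For $i\ge2$ we have $\partial_i:\pi_{i+1}(N)\to\pi_i(T^r)=0$, so these maps vanish trivially. The long exact sequence then splits into the isomorphisms $f_*:\pi_i(M)\cong\pi_i(N)$ for $i\ge2$, using $\partial_1=0$ at the $i=2$ end, together with the short exact sequence (\ref{se: short exact sequence (2)}). This proves (1).

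For (3), take $g\in\pi_1(M)$ and $a\in l_*(\pi_1(T^r))$. Normality of $\ker f_*=\im l_*$ puts $gag^{-1}$ in $\im l_*$, so the commutator $gag^{-1}a^{-1}$ lies in $\im l_*$. It also lies in $[\pi_1(M),\pi_1(M)]$. By (2) it is trivial, so $a$ commutes with $g$ and (3) follows.

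The only delicate point is the identification $\ker(H_1(M)\to H_1(N))=\im\bar l_*$, which is right-exactness of abelianization. This identification is what turns the rank count into an equality. Everything else is formal.
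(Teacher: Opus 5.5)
Your proof is correct and follows essentially the same route as the paper. Both arguments show that $\Ker(\bar f_*)$ is the image of the fiber's first homology, which the paper writes as $l_*(\pi_1(T^{m-k}))/\bigl(l_*(\pi_1(T^{m-k}))\cap[\pi_1(M),\pi_1(M)]\bigr)$; both then use the Betti number condition to force rank $m-k$, and both get (3) from the normality of $\Ker(f_*)$ together with (2). Your phrasing through the injectivity of $h\circ l_*=\bar l_*\circ h''$, with the torsion-freeness of $\mathbb{Z}^{m-k}$ made explicit, is a slightly cleaner way to get both $\partial_1=0$ and (2) than the paper's ``forces equality throughout.''
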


\begin{proof}
	Consider the commutative diagram induced by the Hurewicz homomorphisms:
    \begin{equation}
       \xymatrix{
       \pi_{2}(N) \ar[r]^-{\partial_{1}} & \pi_{1}(T^{m-k}) \ar[r]^-{l_{*}} \ar@{=}[d]^{h''} &\pi_{1}(M) \ar[r]^{f_{*}} \ar[d]^{h} &\pi_{1}(N) \ar[r] \ar[d]^{h'} &0\\
         & H_{1}(T^{m-k}) \ar[r]^-{\bar{l}_{*}} & H_{1}(M) \ar[r]^{\bar{f}_{*}} \ar@{=}[d] & H_{1}(N) \ar@{=}[d] \ar[r]&0\\
         & & \frac{\pi_{1}(M)}{[\pi_{1}(M),\pi_{1}(M)]} & \frac{\pi_{1}(N)}{[\pi_{1}(N),\pi_{1}(N)]}&.
         }\label{big diagram}
    \end{equation}
  We first show that $f_{*}^{-1}([\pi_{1}(N),\pi_{1}(N)])=\Ker(f_{*})\cdot[\pi_{1}(M),\pi_{1}(M)]$. 
   \begin{enumerate}[(a')]
       \item $(\Ker(f_{*})\cdot[\pi_{1}(M),\pi_{1}(M)]\subseteq f_{*}^{-1}([\pi_{1}(N),\pi_{1}(N)]))$: 
       \begin{equation}
       \begin{aligned}
             &f_{*}(\Ker(f_{*})\cdot[\pi_{1}(M),\pi_{1}(M)])=f_{*}(\Ker(f_{*}))\cdot f_{*}([\pi_{1}(M),\pi_{1}(M)])\\
             =&[f_{*}(\pi_{1}(M)),f_{*}(\pi_{1}(M))]=[\pi_{1}(N),\pi_{1}(N)];
       \end{aligned}    
       \end{equation}
       \item $(f_{*}^{-1}([\pi_{1}(N),\pi_{1}(N)])\subseteq \Ker(f_{*})\cdot[\pi_{1}(M),\pi_{1}(M)])$: For each $a\in(f_{*}^{-1}([\pi_{1}(N),\pi_{1}(N)])$, we write $f_{*}(a)=[\text{b}_{1},\text{b}_{1}']\dots[\text{b}_{l},\text{b}_{l}']$. By surjectivity of $f_{*}$, we can find some $a_{1},a_{1}',\dots, a_{l},a_{l}'$ such that $f_{*}(a_{i})=\text{b}_{i},f_{*}(a_{i}')=\text{b}_{i}'$.
       Then $f_{*}([a_{1},a_{1}']\dots[a_{l},a_{l}']\cdot a^{-1})=e$, hence exists a $t\in \Ker(f_{*})$ such that $a=t\cdot [a_{1},a_{1}']\dots[a_{l},a_{l}']$. Therefore, $(f_{*}^{-1}([\pi_{1}(N),\pi_{1}(N)])\subseteq \Ker(f_{*})\cdot[\pi_{1}(M),\pi_{1}(M)])$.
   \end{enumerate}
   \bigskip
    Using the surjectivity of $\bar{f}_{*}$ and $h$, we next compute the kernel of $\bar{f}_{*}$:
  \begin{equation}
	\begin{aligned}
		\Ker(\bar{f}_{*})
        &\cong\frac{\Ker(\bar{f}_{*}\circ h) }{\Ker(h)}
        =\frac{\Ker(h^{'}\circ f_{*}) }{[\pi_{1}(M),\pi_{1}(M)]}\\
		&=\frac{f_{*}^{-1}([\pi_{1}(N),\pi_{1}(N)])}{[\pi_{1}(M),\pi_{1}(M)]}
		=\frac{\Ker(f_{*})\cdot[\pi_{1}(M),\pi_{1}(M)] }{[\pi_{1}(M),\pi_{1}(M)]}\\
		&\cong\frac{\Ker(f_{*})}{\Ker(f_{*})\cap [\pi_{1}(M),\pi_{1}(M)]}
		 =\frac{l_{*}(\pi_{1}(T^{m-k}))}{l_{*}(\pi_{1}(T^{m-k}))\cap [\pi_{1}(M),\pi_{1}(M)]}.
	\end{aligned}\label{eqt3.2}
\end{equation}
The short exact sequence 	
         \begin{equation}
         \xymatrix{
         0 \ar[r] & \Ker(\bar{f}_{*}) \ar[r] & H_{1}(M) \ar[r]^{\bar{f}_{*}} & H_{1}(N) \ar[r] & 0 
         }
         \end{equation}
implies that
\begin{equation}
	\rank(\Ker(\bar{f}_{*}))=\rank(H_{1}(M))-\rank(H_{1}(N))=m-k=\rank(\pi_{1}(T^{m-k})). \label{eqt3.4}
\end{equation}
Combining equalities (\ref{eqt3.2}) and (\ref{eqt3.4}), we obtain the inequality:
\begin{equation}
    \rank(\Ker(\bar{f}_{*}))=m-k\ge \rank(l_{*}(\pi_{1}(T^{m-k})))\ge \rank(\Ker(\bar{f}_{*})),
\end{equation}
which forces equality throughout. Consequently, we have isomorphisms:
\begin{equation}
	\pi_{1}(T^{m-k})\cong l_{*}(\pi_{1}(T^{m-k}))\cong \Ker(\bar{f}_{*}),\label{eqt3.6}
\end{equation}
and two isomorphic maps: 
$ l_{*}:\pi_{1}(T^{m-k})\to l_{*}(\pi_{1}(T^{m-k}))$, $ \text{quot}: l_{*}(\pi_{1}(T^{m-k}))\to \frac{l_{*}(\pi_{1}(T^{m-k}))}{l_{*}(\pi_{1}(T^{m-k}))\cap [\pi_{1}(M),\pi_{1}(M)]}.$

This implies that the image of $\partial_{1}$ is trivial, and the subgroups $l_{*}(\pi_{1}(T^{m-k}))$ and $[\pi_{1}(M),\pi_{1}(M)]$ intersect only at the identity element $e$.
For $ i>1 $, since $ \pi_{i}(T^{m-k}) $ are trivial, we also have $ \im(\partial_{i}) =\{e\}$. This establishes (1) and (2).
 
To prove (3), let $a\in l_{*}(\pi_{1}(T^{m-k}))$ and $b\in \pi_{1}(M)$. Since $ l_{*}(\pi_{1}(T^{m-k}))=\Ker(f_{*}) $ is normal in $ \pi_{1}(M)$, the commutator $[a,b]=a(ba^{-1}b^{-1})$ lies in $l_{*}(\pi_{1}(T^{m-k}))$. By (2), $[a,b]\in l_{*}(\pi_{1}(T^{m-k}))\cap [\pi_{1}(M),\pi_{1}(M)]=\{e\}$. Hence, $a$ commutes with $b$, and $l_{*}(\pi_{1}(T^{m-k}))$ is central.
\end{proof}

We next give an example that satisfies condition (1) of Theorem \ref{Thm3.1}, but fails to satisfy conditions (2),(3), and (\ref{eq:difference}).
\begin{example}
    Let $N_{0}:=S^{3}/\mathbb{Z}_{3}$ be a lens space and consider a representation $\rho:\mathbb{Z}_{3}\to \Aut (T^{3})$ defined by 
    
    \begin{equation}
        \rho(\sigma^{j})=\begin{pmatrix}
                   0&1&0\\ 
                   0&0&1\\ 
                   1&0&0
                    \end{pmatrix}^{j}.
    \end{equation}
Then $M_{0}:=\frac{S^{3}\times T^{3}}{\left \langle(\sigma,~\rho(\sigma))\right \rangle}$ is a torus bundle over $N_{0}$ with structure group $T^{3}\rtimes \Aut (T^{3})$. We verify that the pair $(M_{0},N_{0})$ satisfies condition (1) of Theorem \ref{Thm3.1}, but fails to satisfy condition (2),(3), and (\ref{eq:difference}).
\begin{enumerate}[(a)]
    \item Condition (1) holds because $\pi_{2}(S^{3}/\mathbb{Z}_{3})=\{e\}$ and $\pi_{i}(T^{3})=\{e\}$ for $ i\ge 2$;
    \item Condition (3) fails: The fundamental group is a semidirect product $\pi_{1}(M_{0})=\pi_{1}(T^{3})\rtimes\pi_{1}(N_{0})$ with group law
    \begin{equation}
        (\sigma^{j},t)\cdot(\sigma^{j'},t')=(\sigma^{j+j'},t+\rho(\sigma^{j})(t'))~~~~~\text{for any $j,j'\in \mathbb{Z}$, $t,t \in \mathbb{Z}^{3}$.}
    \end{equation}
    According to Proposition \ref{prop: lifting cri}, by lifting $\rho(\sigma)$ to the universal cover, we obtain
    \begin{equation}
        \pi_{1}(M_{0})\cong \Deck(S^{3}\times \mathbb{R}^{3}/M_{0})=\{(\sigma^{j},\widetilde{\rho(\sigma)}^{j}+\vec{a})|~j\in\mathbb{Z},\vec{a}\in\mathbb{Z}^{3}\},
    \end{equation}
    where $\widetilde{\rho(\sigma)}\in \Aut (\mathbb{Z}^{3})$. The commutator subgroup is
     \begin{equation}
         \begin{aligned}
             [\pi_{1}(M_{0}),\pi_{1}(M_{0})]&\cong \Span \{(\id_{S^{3}},\id_{\mathbb{R}^{3}}+(\widetilde{\rho(\sigma)}-\id)(1,0,0)),\id_{\mathbb{R}^{3}}+(\id_{S^{3}},(\widetilde{\rho(\sigma)}^{2}-\id)(1,0,0))\}\\
             &=\{(\id_{S^{3}},\id_{\mathbb{R}^{3}}+(a,b,c))|~(a,b,c)\in\mathbb{Z}^{3},a+b+c=0\},
         \end{aligned}
     \end{equation}and $H_{1}(M_{0})\cong \mathbb{Z}\oplus\mathbb{Z}_{3}$.
     Hence, $\text{b}_{1}(M_{0})-\text{b}_{1}(N_{0})=1<3$, and 
     \begin{equation}
         l_{*}(\pi_{1}(T^{3}))\cap [\pi_{1}(M_{0}),\pi_{1}(M_{0})]\cong\{(\id_{S^{3}},\id_{\mathbb{R}^{3}}+(a,b,c))|(a,b,c)\in\mathbb{Z}^{3},a+b+c=0\}
     \end{equation} is nontrivial.
\end{enumerate}
\end{example}

\begin{remark}
	In the proof of Theorem \ref{Thm3.1}, the isomorphism (\ref{eqt3.6}) implies that the sequence (\ref{se: short exact sequence (3)}) in homology is exact.
\end{remark}

\begin{lemma}\label{lem3.4}
	Under the assumptions of Theorem \ref{Thm3.1}, if the exact sequence (\ref{se: short exact sequence (3)}) in homology splits, then the sequence (\ref{se: short exact sequence (2)}) in homotopy also splits.
\end{lemma}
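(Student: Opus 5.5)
We construct a retraction $r:\pi_{1}(M)\to\pi_{1}(T^{m-k})$ with $r\circ l_{*}=\id$; for a short exact sequence this is equivalent to splitting. Such a retraction exists at the homology level and will be transported to homotopy through the Hurewicz maps in diagram (\ref{big diagram}).

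First we record that the homotopy sequence (\ref{se: short exact sequence (2)}) is short exact. By Theorem~\ref{Thm3.1}(1), $\partial_{1}$ is trivial, so $l_{*}$ is injective. The surjectivity of $f_{*}$ comes from the long exact sequence of Theorem~\ref{Thm2.2} together with the connectedness of the fiber. By the Remark after the Example, the homology sequence (\ref{se: short exact sequence (3)}) is also exact.

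Next, suppose (\ref{se: short exact sequence (3)}) splits. Then there is a homomorphism $\bar{r}:H_{1}(M)\to H_{1}(T^{m-k})$ with $\bar{r}\circ\bar{l}_{*}=\id_{H_{1}(T^{m-k})}$. We define
\begin{equation}
r:=(h'')^{-1}\circ\bar{r}\circ h:\pi_{1}(M)\to\pi_{1}(T^{m-k}).
\end{equation}
This makes sense because $h''$ is an isomorphism, as $\pi_{1}(T^{m-k})$ is abelian. By naturality of the Hurewicz homomorphism, $h\circ l_{*}=\bar{l}_{*}\circ h''$. Therefore
\begin{equation}
r\circ l_{*}=(h'')^{-1}\circ\bar{r}\circ\bar{l}_{*}\circ h''=\id_{\pi_{1}(T^{m-k})}.
\end{equation}
So $l_{*}$ admits a left inverse.

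Finally, we convert the retraction into a splitting. Since $\pi_{1}(M)$ need not be abelian, a left inverse alone does not automatically give a direct product decomposition. Here Theorem~\ref{Thm3.1}(3) is used: $l_{*}(\pi_{1}(T^{m-k}))$ lies in the center of $\pi_{1}(M)$. Consider
\begin{equation}
\phi:\pi_{1}(M)\to\pi_{1}(T^{m-k})\times\pi_{1}(N),\qquad \phi(a)=(r(a),f_{*}(a)).
\end{equation}
$\phi$ is a homomorphism. It is injective: if $f_{*}(a)=e$, then $a=l_{*}(t)$ for some $t$, and $r(a)=t$ vanishes only if $a=e$. It is surjective: given $(t,b)$, pick $a$ with $f_{*}(a)=b$; then $a\cdot l_{*}\bigl(t\,r(a)^{-1}\bigr)$ maps to $(t,b)$.

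Hence $\pi_{1}(M)\cong\pi_{1}(T^{m-k})\times\pi_{1}(N)$ compatibly with $l_{*}$ and $f_{*}$, and a section $s:\pi_{1}(N)\to\pi_{1}(M)$ is given by $b\mapsto\phi^{-1}(0,b)$. This is the splitting of (\ref{se: short exact sequence (2)}).

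The main point of care is this last step: checking that the retraction yields a genuine splitting in the nonabelian setting. The centrality from Theorem~\ref{Thm3.1}(3) makes it routine, and only the elementary verifications of injectivity and surjectivity of $\phi$ remain.
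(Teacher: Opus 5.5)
Your argument is correct, but it takes the dual route to the paper's.

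\textbf{Paper's route.} The paper takes a section $j$ of $\bar{f}_{*}$, sets $G:=h^{-1}(\im(j))$, and checks that $\pi_{1}(M)$ is the internal direct product $l_{*}(\pi_{1}(T^{m-k}))\times G$. Trivial intersection uses Theorem~\ref{Thm3.1}(2), since an element of the intersection lies in $\Ker(h)=[\pi_{1}(M),\pi_{1}(M)]$. Commutation uses Theorem~\ref{Thm3.1}(3). The inverse of the isomorphism $f_{*}|_{G}\colon G\to\pi_{1}(N)$ is then the splitting.

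\textbf{Your route.} You take a retraction $\bar{r}$ of $\bar{l}_{*}$, transport it to $r$ by naturality of the Hurewicz map, and show $(r,f_{*})$ is an isomorphism. Note that $\Ker(r)=h^{-1}(\Ker(\bar{r}))$ is exactly the paper's $G$ when $\Ker(\bar{r})=\im(j)$.

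\textbf{Comparison.} Your route is more economical. It uses only exactness of the homotopy sequence at $\pi_{1}(M)$ and surjectivity of $f_{*}$, both of which hold for any bundle with connected fiber. Injectivity of $l_{*}$ already follows from $r\circ l_{*}=\id$. So Theorem~\ref{Thm3.1} and condition~(\ref{eq:difference}) are not actually needed: split injectivity of $\bar{l}_{*}$ alone forces $\pi_{1}(M)\cong\pi_{1}(T^{m-k})\times\pi_{1}(N)$. The paper's route has the advantage of describing the complement explicitly, as the Hurewicz preimage of a subgroup of $H_{1}(M)$ containing the commutator subgroup. That is the form it reuses for $K=h^{-1}(K_{1})\times\im(l_{*})$ when constructing $\hat{M}$.

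\textbf{A correction to your commentary.} It is not true that, in the nonabelian setting, ``a left inverse alone does not automatically give a direct product decomposition.'' A retraction onto the kernel always does; it is a section of the quotient map that only yields a semidirect product. Your own check that $\phi$ is bijective never uses Theorem~\ref{Thm3.1}(3). It uses only that $\pi_{1}(T^{m-k})$ is abelian, and even that disappears if you use $a\cdot l_{*}(r(a)^{-1}t)$ in the surjectivity step. Centrality of $l_{*}(\pi_{1}(T^{m-k}))$ is a consequence of your isomorphism, not an input.
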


\begin{proof}
	Let $ j: H_{1}(N)\to  H_{1}(M) $ be a right inverse of $ \bar{f}_{*} $. Then $ H_{1} (M)= \im(j) \oplus \im(\bar{l}_{*})$.
    
    Let $G:=h^{-1}(\im(j))$. By the correspondence theorem in group theory, $G$ is a normal subgroup of $\pi_{1}(M)$ containing the commutator subgroup $[\pi_{1}(M),\pi_{1}(M)]$. We claim that \begin{equation}
        \pi_{1}(M)=l_{*}(\pi_{1}(T^{m-k}))\times G.
    \end{equation}

    \begin{enumerate}[(1)]
        \item[$\bullet$] Intersection of Subgroups:
        If $a\in l_{*}(\pi_{1}(T^{m-k}))\cap G$, then $h(a)\in \im(\bar{l}_{*})\cap \im(j)=\{e\}$, so $a\in \Ker(h)= [\pi_{1}(M),\pi_{1}(M)]$. By Theorem \ref{Thm3.1} (2), $a=e$;
        \item[$\bullet$] Commutativity: According to Theorem \ref{Thm3.1} (3), every element of $l_{*}(\pi_{1}(T^{m-k}))$ commutes with every element of $G$, so the product is well-defined;
        \item[$\bullet$] Decomposition of $\pi_{1}(M)$:
        For any element $b\in\pi_{1}(M)$, let $h(\text{b}_{1})$ be the projection of $h(b)$ onto $\im(\bar{l}_{*}) $. Then $\text{b}_{1}^{-1}\cdot b\in G$, so $b=\text{b}_{1}\cdot(\text{b}_{1}^{-1}\cdot b)\in l_{*}(\pi_{1}(T^{m-k}))\cdot G$.
    \end{enumerate}
     By the fundamental theorem of group homomorphisms, the restriction $f_{*}|_{G}:G\to\pi_{1}(N)$ is an isomorphism. It follows that the short homotopy exact sequence (\ref{se: short exact sequence (2)}) splits. 
\end{proof}

    If Theorem \ref{the:Topological Splitting Theorem} is assumed, then Corollary \ref{cor 1.3} follows immediately from Lemma \ref{lem3.4}. \\

We now construct the covering spaces $\hat{M}$ and $\hat{N}$ described in Theorem~\ref{the: Structure Theorem}. Let $\text{Tor}(H_{1}(N))$ denote the torsion subgroup of $H_{1}(N)$, and choose a torsion‑free subgroup $\text{TF}(H_{1}(N))$ such that
\begin{equation}
    H_{1}(N)=\text{TF}(H_{1}(N))\oplus \text{Tor}(H_{1}(N)).
\end{equation} 
Since $\text{TF}(H_{1}(N))$ is free abelian, the short exact sequence:
        \begin{equation}
         \xymatrix{
         0 \ar[r] & \Ker(\text{proj}\circ\bar{f}_{*}) \ar[r] & H_{1}(M) \ar[r]^--{\text{proj}\circ\bar{f}_{*}} & \text{TF}(H_{1}(N)) \ar[r] & 0 
         }
         \end{equation}
splits, where $\text{proj}:H_{1}(N)\to \text{TF}(H_{1}(N))$ is a projection. Choose a right-inverse of $\text{proj}\circ\bar{f}_{*}$, and let $K_{1}\subset H_{1}(M)$ be its image. 

Let $K:=h^{-1}(K_{1})\times \im(l_{*})$ (The direct sum decomposition follows by the same argument as in Lemma \ref{lem3.4}.) and $K':=(h')^{-1}(\text{TF}(H_{1}(N)))$. Then:
\begin{equation}
	f_{*}^{-1}(K')=f_{*}^{-1}(h'^{-1}(\text{TF}(H_{1}(N))))=h^{-1}(\bar{f}_{*}^{-1}(\text{TF}(H_{1}(N))))=h^{-1}(K_{1}\times \im(\bar{l}_{*}))=K.
\end{equation}
The direct sum decomposition yields $h^{-1}(K_{1})\cong K^{'}$. Moreover, $K'$ is a normal subgroup of $\pi_{1}(N)$ of finite index, and $K$ is also a normal subgroup of $\pi_{1}(M)$ of the same index (We will next provide the proof in Proposition \ref{prop3.5}(2).). By covering space theory, there exist normal covering spaces $\hat{N}\to N$ and $\hat{M}\to M$ with $\pi_{1}(\hat{N})\cong K'$ and $\pi_{1}(\hat{M})\cong K$, respectively.

The following proposition describes the relationship between these covering spaces.

\begin{proposition}\label{prop3.5}
    The covering spaces $\hat{M}$ and $\hat{N}$ satisfy the following properties:
    \begin{enumerate}[(1)]
         \item The space $\hat{M}$ is also a $T^{m-k}$-bundle over $\hat{N}$ with an affine structure group, and the diagram
         \begin{equation}
             \xymatrix{
             \hat{M} \ar[r]^{\text{pr}_{M}} \ar[d]^{\text{pr}_{\hat{N}}}& M \ar[d]^{f}\\
            \hat{N} \ar[r]^{\text{pr}_{N}}& N
             }
         \end{equation}
          commutes;
        \item $\Deck(\hat{M}/M)\cong \Deck(\hat{N}/N)\cong \text{Tor}(H_{1}(N))$;
        \item The commutator subgroups satisfy $[\pi_{1}(N),\pi_{1}(N)]\subseteq \text{Pr}_{N}(\pi_{1}(\hat{N}))$ and $[\pi_{1}(M),\pi_{1}(M)]\subseteq h^{-1}(K_{1})$;
        \item The homotopy and homology sequences (\ref{se: short exact sequence (2)}) and (\ref{se: short exact sequence (3)}) split for the torus bundle $\hat{M}\to\hat{N}$. 
    \end{enumerate}
\end{proposition}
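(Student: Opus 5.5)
The plan is to handle the four items essentially in the order (2), (3), (1), (4). Everything is group theory on the diagram (\ref{big diagram}), together with Theorem~\ref{Thm3.1} and the identity $f_{*}^{-1}(K')=K$ established just before the proposition.

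\emph{Items (2) and (3).} Since $H_{1}(N)$ is abelian and $h'$ is surjective, $K'=(h')^{-1}(\text{TF}(H_{1}(N)))$ is normal in $\pi_{1}(N)$ and contains $\Ker(h')=[\pi_{1}(N),\pi_{1}(N)]$. This gives the first half of (3). It also gives
\[
\pi_{1}(N)/K'\cong H_{1}(N)/\text{TF}(H_{1}(N))\cong\text{Tor}(H_{1}(N)),
\]
which is finite because $N$ is closed, so $H_{1}(N)$ is finitely generated. The same argument applies to $K=f_{*}^{-1}(K')$: it is a preimage of a normal subgroup under the surjection $f_{*}$, hence normal, and $f_{*}$ induces an isomorphism $\pi_{1}(M)/K\cong\pi_{1}(N)/K'$. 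Property~\ref{property: normal}(1) then gives (2).

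For the second half of (3), I will show $h^{-1}(K_{1})\supseteq\Ker(h)=[\pi_{1}(M),\pi_{1}(M)]$; this holds because $K_{1}$ is a subgroup of $H_{1}(M)$ and so contains $0$. This also shows that $h^{-1}(K_{1})$ is normal, which justifies the direct product $K=h^{-1}(K_{1})\times\im(l_{*})$ by the argument of Lemma~\ref{lem3.4}, using Theorem~\ref{Thm3.1}(2),(3).

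\emph{Item (1).} The composite $f\circ\text{pr}_{M}:\hat{M}\to N$ induces on $\pi_{1}$ a map with image $f_{*}(K)=K'=(\text{pr}_{N})_{*}\pi_{1}(\hat{N})$. By the lifting criterion (Proposition~\ref{prop: lifting cri}) it lifts to a map $\text{pr}_{\hat{N}}:\hat{M}\to\hat{N}$ making the square commute. To see that $\text{pr}_{\hat{N}}$ is a $T^{m-k}$-bundle, I would compare $\hat{M}$ with the pullback $\text{pr}_{N}^{*}M$. This pullback is a torus bundle over $\hat{N}$ with affine structure group, since transition functions pull back. It is also a covering of $M$ of degree $|\text{Tor}(H_{1}(N))|$, because it is the pullback of the covering $\text{pr}_{N}$ along $f$.

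The fiber of $\text{pr}_{N}^{*}M$ is connected and $\hat{N}$ is connected, so $\text{pr}_{N}^{*}M$ is connected, and its $\pi_{1}$ maps into $f_{*}^{-1}(K')=K$. Comparing degrees, its image is exactly $K$. Hence $\text{pr}_{N}^{*}M\cong\hat{M}$ as coverings of $M$, and under this identification the bundle projection is $\text{pr}_{\hat{N}}$. I expect this identification, together with checking the connectedness and degree counts, to be the main (mild) obstacle.

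\emph{Item (4).} The homotopy exact sequence of $\hat{M}\to\hat{N}$ maps into that of $M\to N$. The fiber is unchanged, and $\pi_{1}(\hat{M})=K=h^{-1}(K_{1})\times\im(l_{*})$ with $f_{*}$ restricting to an isomorphism $h^{-1}(K_{1})\cong K'=\pi_{1}(\hat{N})$. The boundary map vanishes, because the sequence injects into that of $M$, where Theorem~\ref{Thm3.1}(1) applies. So the sequence $0\to\pi_{1}(T^{m-k})\to K\to K'\to0$ is exact, and $h^{-1}(K_{1})$ gives a splitting.

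Abelianizing gives the homology statement. We have $H_{1}(\hat{M})=K/[K,K]\cong H_{1}(\hat{N})\oplus\mathbb{Z}^{m-k}$, because $[K,K]=[h^{-1}(K_{1}),h^{-1}(K_{1})]$ lies in the first factor. The fiber inclusion is $\bar{l}_{*}$ onto the second summand, and $\bar{f}_{*}$ is projection onto the first, so (\ref{se: short exact sequence (3)}) splits for $\hat{M}\to\hat{N}$.
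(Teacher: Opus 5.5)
Your proof is correct. For items (2) and (3) it is the same group-theoretic argument as the paper's. The genuine difference is in item (1).

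The paper also identifies $\hat{M}$ with the pullback $\text{pr}_{N}^{*}(M)$. It proves that $(\text{pr}_{N}^{*})_{*}$ maps $\pi_{1}(\text{pr}_{N}^{*}(M))$ \emph{onto} $K$ by a diagram chase. For $a\in K$, surjectivity of $(f^{*})_{*}$ gives $x$ with $f_{*}((\text{pr}_{N}^{*})_{*}(x))=f_{*}(a)$. The discrepancy lies in $\Ker(f_{*})=\im(l_{*})$, which is hit because $l$ factors through the fiber inclusion of the pullback.

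You prove only the inclusion into $K$ and close the gap by counting: $\text{pr}_{N}^{*}(M)$ is a connected covering of $M$ of degree $|\text{Tor}(H_{1}(N))|=[\pi_{1}(M):K]$. Your route is shorter, and it makes explicit the connectedness of the pullback that the paper uses tacitly. However, it needs the index to be finite and needs (2) to be proved before (1). The paper's chase works for an arbitrary connected covering of $N$.

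In (4) you are more careful than the paper. The paper only records an abstract isomorphism $\pi_{1}(\hat{M})\cong\pi_{1}(T^{m-k})\times\pi_{1}(\hat{N})$, and writes $h^{-1}(K_{1})\cong K_{1}$ where $K'$ is meant. You check three things:
\begin{itemize}
\item the boundary map for $\hat{M}\to\hat{N}$ vanishes;
\item $f_{*}$ restricts to an isomorphism $h^{-1}(K_{1})\to K'$, so the splitting is compatible with the bundle maps;
\item the homology splitting follows by abelianizing.
\end{itemize}

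One caveat you inherit from the paper's setup concerns the identity $f_{*}^{-1}(K')=h^{-1}(K_{1})\times\im(l_{*})$ that you quote. It requires $\bar{f}_{*}(K_{1})\subseteq\text{TF}(H_{1}(N))$, and an arbitrary right inverse of $\text{proj}\circ\bar{f}_{*}$ need not satisfy this. The fix is to take $K_{1}$ to be the image of a lift of the inclusion $\text{TF}(H_{1}(N))\hookrightarrow H_{1}(N)$ through $\bar{f}_{*}$. Such a lift exists because $\text{TF}(H_{1}(N))$ is free, and with this choice both arguments go through.
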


\begin{proof}
    \begin{enumerate}[(1)]
        \item 
        We prove that $\hat{M}$ is isomorphic to the pullback bundle $\text{pr}_{N}^{*}(M)$ using the universal property.
        
       Consider the diagram:
        \begin{equation}
             \xymatrix{
             \text{pr}_{N}^{*}(M) \ar[r]^-{\text{pr}_{N}^{*}} \ar[d]^{f^{*}}& M \ar[d]^{f}\\
            \hat{N} \ar[r]^-{\text{pr}_{N}}& N
             }
         \end{equation}
        Note that $\text{pr}_{N}^{*}(M)$ is a covering space over $M$. We then show that it is isomorphic to $\hat{M}$ as covering spaces of $M$ by demonstrating that their fundamental groups coincide, each being equal to $K\subseteq \pi_{1}(M)$.
        
         From the homotopy exact sequence of the pullback bundle and the covering map, we obtain the following diagram of fundamental groups:
         
         \begin{equation}
               \xymatrix{
                 \pi_{1}(T^{m-k}) \ar[r]^-{\hat{l_{*}}} \ar[dr]_{l_{*}}   &\pi_{1}(\text{pr}_{N}^{*}(M)) \ar[d]^{(\text{pr}_{N}^{*})_{*}} \ar[r]^-{(f^{*})_{*}}& \pi_{1}(\hat{N}) \ar[d]^{\text{Pr}_{N}}\\
                  & \pi_{1}(M) \ar[r]_-{f_{*}} &\pi_{1}(N).
                 }
         \end{equation}
        By definition, $\text{Pr}_{N}(\pi_{1}(\hat{N}))=K'$. Therefore, 
        \begin{equation}
            f_{*}(\im(\text{pr}_{N}^{*})_{*})=f_{*}\circ (\text{pr}_{N}^{*})_{*}(\pi_{1}(\text{pr}_{N}^{*}(M)))=\text{Pr}_{N}\circ(f^{*})_{*}(\pi_{1}(\text{pr}_{N}^{*}(M)))=K'.
        \end{equation}
        This implies $\im(\text{pr}_{N}^{*})_{*} \subseteq f_{*}^{-1}(K')=K$. Hence, the induced map $(\text{pr}_{N}^{*})_{*}:\pi_{1}(\text{pr}_{N}^{*}(M))\to K$ is well-defined.

         We claim that $(\text{pr}_{N}^{*})_{*}:\pi_{1}(\text{pr}_{N}^{*}(M))\to K$ is an isomorphism: 
         \begin{itemize}
             \item Surjectivity: Let $a\in K$, Since $(f^{*})_{*}$ is surjective and $f_{*}(a)\in K'$, there exists $x\in\pi_{1}(\text{pr}_{N}^{*}(M))$ such that $f_{*}\circ (\text{pr}_{N}^{*})_{*}(x)=\text{Pr}_{N}\circ(f^{*})_{*}(x)=f_{*}(a)$. Then $a\cdot ( \text{pr}_{N}^{*})_{*}(x)^{-1}\in \Ker(f_{*})=\im(l_{*})$. Consequently, we can write $a=(\text{pr}_{N}^{*})_{*}(\hat{l_{*}}(t)\cdot x)$ for some $t\in \pi_{1}(T^{m-k})$.
             \item  Injectivity: Since $\text{pr}^{*}$ is a covering map, $(\text{pr}_{N}^{*})_{*}:\pi_{1}(\text{pr}_{N}^{*}(M))\to \pi_{1}(M)$ is injective.
         \end{itemize}

        By Proposition \ref{prop: lifting cri}, we get an isomorphism of covering spaces $\text{pr}_{N}^{*}(M)\cong \hat{M}$;
        \bigskip
        \item There are natural isomorphisms: 
        \begin{equation}
        \begin{aligned}
            \Deck(\hat{M}/M)&\cong \frac{\pi_{1}(M)}{K}\cong\frac{\pi_{1}(M)/l_{*}(\pi_{1}(T^{m-k}))}{K/l_{*}(\pi_{1}(T^{m-k}))}\cong\frac{\pi_{1}(N)}{K'}\cong \Deck(\hat{N}/N),\\
            \Deck(\hat{N}/N)&\cong \frac{\pi_{1}(N)}{K'}\cong \frac{H_{1}(N)}{\text{TF}(H_{1}(N)) }\cong \text{Tor}(H_{1}(N));\\
        \end{aligned}     
        \end{equation}
        \bigskip
        \item Observe that 
        \begin{equation}
            [\pi_{1}(N),\pi_{1}(N)]=h'^{-1}(0)\subseteq h'^{-1}(\text{TF}(H_{1}(N)))=K'.
        \end{equation} Similarly, we have 
        \begin{equation}
            [\pi_{1}(M),\pi_{1}(M)]=h^{-1}(0)\subseteq h^{-1}(K_{1}).
        \end{equation} Hence both inclusions hold;
        \bigskip
        \item Since $\pi_{1}(\hat{M})\cong K=\im(l_{*})\times h^{-1}(K_{1}) \cong \im(l_{*})\times K_{1}\cong \pi_{1}(T^{m-k})\times \pi_{1}(\hat{N})$, the sequence splits.
    \end{enumerate}
\end{proof}

\begin{remark}
	Note that the first Betti number of $\hat{N}$ may be greater than $\text{b}_{1}(N)$, and $H_{1}(\hat{N})$ may still contain nontrivial torsion elements.

    To illustrate this, we consider a Baumslag–Solitar group 
    \begin{equation}
        \text{BS}(1,n):=\left \langle a,b|a^{-1}ba=b^{n} \right \rangle ~~~~(n\ge 3),
    \end{equation}
    which is finitely presented. By a standard realization theorem (see e.g.\cite[Chapter 5, Chapter 8]{GompfStipsicz}),
    there exists a closed $4$-dimensional manifold $N$ with $\pi_{1}(N)\cong \text{BS}(1,n)$.

    We briefly recall some basic properties of Baumslag–Solitar groups:
    \begin{property}[See~{\cite[Chapter 5]{BS}}]\label{BS group}The following are true:
    \begin{enumerate}[(1)]
        \item Every element of $\text{BS}(1,n)$ can be written in the form $a^{l}(a^{l'}ba^{-l'})^{s}$ with $l,l',s\in\mathbb{Z}$ (here $l$ is unique);
        \item There is a short exact sequence
        \begin{equation}
            \xymatrix{
            0\ar[r]&\mathbb{Z}[\frac{1}{n}]\ar[r]&\text{BS}(1,n) \ar[r]^-{q}& \mathbb{Z}\ar[r]&0
            },
        \end{equation}
        where $q(a^{l}(a^{l'}ba^{-l'})^{s})=l$ and $\mathbb{Z}[\frac{1}{n}]:=\{a^{l'}b^{s}a^{-l'}|~l',s\in \mathbb{Z}\}$ is the localization of the integer ring $\mathbb{Z}$ inside the rational number $\frac{1}{n}$;
        \item Every finite-index subgroup $G$ of $\text{BS}(1,n)$ is generated by two elements of the form
        \begin{equation}
            \left \langle a^{l_{1}}(a^{l'}ba^{-l'})^{s_{1}},b^{s_{2}}\right \rangle~~(l_{1},l',s_{1},s_{2} \in\mathbb{Z},l_{1}>0),
        \end{equation}
       and is isomorphic to a Baumslag-Solitar group $\text{BS}(1,n')$ with $n'\ge 3$;
        \item For $n\ge 3$, the abelianization of $\text{BS}(1,n)$ is isomorphic to $\mathbb{Z}\oplus\mathbb{Z}_{n-1}$.
    \end{enumerate}
    \end{property}
    Now let $\hat{N}$ be any finite‑sheeted covering space of $N$. Then $\pi_{1}(\hat{N})$ is a finite‑index subgroup of $\pi_{1}(N)\cong \text{BS}(1,n)$; by Property~\ref{BS group}(3) it is itself a Baumslag–Solitar group. Consequently, by Property~\ref{BS group}(4), the torsion subgroup $\text{Tor}(H_{1}(\hat{N}))$ is nontrivial. This shows that even after passing to finite covers, the torsion part of the first homology of the base remains nontrivial.
\end{remark}

In Section \ref{sec: Comp}, we will establish a finer relationship between these covering spaces.

\section{The Principal Bundle Structure and Topological Splitting under the Betti Number Condition}\label{sec: Pri}

    In this section, we prove part (1) of Theorem \ref{the: Structure Theorem}, establishing that the torus bundle $M$ is principal under the Betti number condition~(\ref{eq:difference}). The core of the argument lies in showing that the structure group of the bundle reduces to the torus $T^{m-k}$ itself. Having established this principal bundle structure, we then turn to the question of topological triviality. Using obstruction theory, we compute the Euler class of the principal torus bundle appearing in Theorem \ref{the:Topological Splitting Theorem} and complete its proof.

    By Corollary \ref{Red Thm:cor}, it suffices to show that for every loop $\gamma: S^{1}\to N$, the structure group of the pullback bundle $F_{\gamma}:=\gamma^{*}(M)$ admits a reduction to the torus $T^{m-k}$. We begin by describing the topology of such pullback bundles.
       
	 \begin{property} Let $\gamma: S^{1}\to N$ be a loop and $F_{\gamma}:=\gamma^{*}(M)$ the pullback torus bundle over $S^{1}$. Then:
	 	\begin{enumerate}[(1)]
	 		\item $F_{\gamma}$ is bundle isomorphic to the \textbf{mapping torus} $(T^{m-k}\times [0,1])/(z,0)\sim(g_{\gamma}(z),1)$, where $g_{\gamma}\in T^{m-k}\rtimes \Aut (T^{m-k})$.	  
	 		\item  The universal cover of $F_{\gamma}$ is $\mathbb{R}^{m-k+1}$, with deck transformation group
	 		\begin{equation}\label{eqn 4.1}
	 			\Deck(\mathbb{R}^{m-k+1}/F_{\gamma})=\Span\{(\id_{\mathbb{R}^{m-k+1}},e_{1}),(\id_{\mathbb{R}^{m-k+1}},e_{2}),\dots,(\id_{\mathbb{R}^{m-k+1}},e_{m-k}),(\Aff(g_{\gamma}),e_{m-k+1}+\Tran(g_{\gamma}))\},
	 		\end{equation} where $e_{1},\dots,e_{m-k+1}$ are the standard basis vectors of $\mathbb{R}^{m-k+1}$, $\Aff(g_{\gamma})\in \text{GL}(m-k,\mathbb{Z})$ denotes the linear part of $g_{\gamma}$, and $\Tran(g_{\gamma})\in \mathbb{R}^{m-k}\times \{0\}$ is a lift of the translation part of $g_{\gamma}$;
 		\item The generators $(\id_{\mathbb{R}^{m-k+1}},e_{1})$, $(\id_{\mathbb{R}^{m-k+1}},e_{2})$, $\dots$, $(\id_{\mathbb{R}^{m-k+1}},e_{m-k})$ correspond to a basis for $\pi_{1}(T^{m-k})$ of the fiber, while $(\Aff(g_{\gamma}),e_{m-k+1}+\Tran(g_{\gamma}))$ corresponds to a generator of $\pi_{1}$ of the base $S^{1}$.
	 	\end{enumerate}
	 \end{property}

\begin{proof}
    \begin{enumerate}[(1)]
        \item The pullback of the fiber bundle $F_{\gamma}$ via the quotient map $[0,1]\to S^{1}$ is trivial over $[0,1]$, which yields a fiber-preserving surjection $[0,1]\times T^{m-k}\to F_{\gamma}$. The identification of the fibers over $0$ and $1$ is given by an element $g_{\gamma}$ of the affine structure group;
        \bigskip
        \item By part (1), $T^{m-k}\times\mathbb{R}$ is a normal covering space over $F_{\gamma}$, with 
        \begin{equation}
	 	\Deck(T^{m-k}\times\mathbb{R}/F_{\gamma})=\Span\{(g_{\gamma},e_{m-k+1})\},
	    \end{equation}where $(g_{\gamma},e_{m-k+1})(t,r)=(g_{\gamma}(t),r+e_{m-k+1})$ for      $\forall t\in T^{m-k}$ and $\forall r\in\mathbb{R}$.
        
        Using Proposition \ref{prop: lifting cri}, we lift this action to the universal cover over $\mathbb{R}^{m-k+1}$ of $T^{m-k}\times\mathbb{R}$ and obtain a deck transformation group $\tau\in\Deck(\mathbb{R}^{m-k+1}/F_{\gamma})$. We next show that $\tau$ can be written as the form $(\Aff(g_{\gamma}),e_{m-k+1}+\Tran(g_{\gamma}))$.

        Consider the following diagram:
         \begin{equation}
               \xymatrix{
                    \mathbb{R}^{m-k+1}  \ar[d]^{\tau} \ar[rr]^{\text{pr}_{T^{m-k}}\times \id_{\mathbb{R}}}&& T^{m-k}\times \mathbb{R} \ar[d]_{(g_{\gamma},e_{m-k+1})} \ar[drr]^{\text{pr}_{F_{\gamma}}}&&\\
                   \mathbb{R}^{m-k+1} \ar[rr]_{\text{pr}_{T^{m-k}}\times \id_{\mathbb{R}}} && T^{m-k}\times \mathbb{R} \ar[rr]_{\text{pr}_{F_{\gamma}}}&& F_{\gamma},
                 }
         \end{equation}where $\text{pr}_{F_{\gamma}}:T^{m-k}\times \mathbb{R}\to F_{\gamma}$ is the covering map.

          Since $g_{\gamma}\in T^{m-k}\rtimes \Aut (T^{m-k})$, we can write $g_{\gamma}(t)$ as $\Aff(g_{\gamma})t+\vec{a}$, where $\Aff(g_{\gamma})\in \Aut(T^{m-k})\cong GL(m-k,\mathbb{Z})$ and $\vec{a}\in T^{m-k}$. For each $(r_{1},x_{1}),(r_{2},x_{2})\in \mathbb{R}^{m-k}\times \mathbb{R}$, we have:
          \begin{equation}
          \begin{aligned}
                &(\Aff(g_{\gamma})(\text{pr}_{T^{m-k}}(r_{1}))+x_{1}+\vec{a}+e_{m-k+1})\\
                +&(\Aff(g_{\gamma})(\text{pr}_{T^{m-k}}(r_{2}))+x_{2}+\vec{a}+e_{m-k+1})\\
                -&(\Aff(g_{\gamma})(\text{pr}_{T^{m-k}}(r_{1}+r_{2}))+x_{1}+x_{2}+\vec{a}+e_{m-k+1})\\
                -&\vec{a}-e_{m-k+1}=[0]\in T^{m-k}\times\mathbb{R}.
          \end{aligned}
          \end{equation}
          Using the lifting property of $\tau$ and the group homomorphism property of $\text{pr}_{T^{m-k}}\times \id_{\mathbb{R}}$, we obtain 
          \begin{equation}\label{eqn 4.4}
              \tau(r_{1},x_{1})+\tau(r_{2},x_{2})-\tau(r_{1}+r_{2},x_{1}+x_{2})-\Tran(g_{\gamma})-e_{m-k+1}\in \mathbb{Z}^{m-k},
          \end{equation}where $\Tran(g_{\gamma})$ is a lift of $\vec{a}$.
            By the continuity of $\tau$, the left-hand side of (\ref{eqn 4.4}) takes a constant value in $\mathbb{Z}^{m-k}$. Without loss of generality, we may assume that this constant is zero, i.e.,
          \begin{equation}
              \tau(r_{1},x_{1})+\tau(r_{2},x_{2})-\tau(r_{1}+r_{2},x_{1}+x_{2})-\Tran(g_{\gamma})-e_{m-k+1}=0.
          \end{equation} Then the function $\tau-\Tran(g_{\gamma})-e_{m-k+1}$ is linear on $\mathbb{R}^{m-k}$, hence $\tau-\Tran(g_{\gamma})-e_{m-k+1}\in \Aut(\mathbb{R}^{m-k+1})$, we can rewrite $\tau(\cdot)=A(\cdot)+\Tran(g_{\gamma})+e_{m-k+1}$ for some $A\in GL(m-k+1,\mathbb{Z})$. Comparing the linear parts of $\tau$ and $(g_{\gamma},e_{m-k+1})$, we conclude that $A=\Aff(g_{\gamma})$.

         We next verify that the relation (\ref{eqn 4.1}) holds.
          Observe that the set on the right side of (\ref{eqn 4.1}) is contained in the set on the left side. It remains to prove the reverse inclusion.
          For each $\alpha'\in \Deck(\mathbb{R}^{m-k+1}/F_{\gamma})$, we have:
          \begin{equation}
              \text{pr}_{F_{\gamma}}\circ(\text{pr}_{T^{m-k}}\times \id_{\mathbb{R}})\circ \alpha'=\text{pr}_{F_{\gamma}}\circ(\text{pr}_{T^{m-k}}\times \id_{\mathbb{R}}).
          \end{equation}
        By the normality of the covering map $\text{pr}_{F_{\gamma}}$, there exists $\alpha\in \Deck(T^{m-k}\times\mathbb{R}/F_{\gamma})$ such that:
           \begin{equation}
               \alpha\circ(\text{pr}_{T^{m-k}}\times \id_{\mathbb{R}})=(\text{pr}_{T^{m-k}}\times \id_{\mathbb{R}})\circ \alpha'.
           \end{equation}
          Write $\alpha$ as $(g_{\gamma},e_{m-k+1})^{l}$ for some $l\in\mathbb{Z}$. Then:
          \begin{equation}
              \alpha\circ(\text{pr}_{T^{m-k}}\times \id_{\mathbb{R}})=(\text{pr}_{T^{m-k}}\times \id_{\mathbb{R}})\circ (\Aff(g_{\gamma}),e_{m-k+1}+\Tran(g_{\gamma}))^{l}.
          \end{equation}
        By an argument analogous to the one above, we obtain:
        \begin{equation}
            \alpha'-(\Aff(g_{\gamma}),e_{m-k+1}+\Tran(g_{\gamma}))^{l}=\sum_{i=1}^{m-k}k_{i}e_{i}
        \end{equation}for some integers $k_{i}\in\mathbb{Z}$.
       Hence,
       \begin{equation}
            \alpha'=\displaystyle\prod_{i=1}^{m-k}(\id_{\mathbb{R}^{m-k+1}},e_{i})^{k_{i}}\circ(\Aff(g_{\gamma}),e_{m-k+1}+\Tran(g_{\gamma}))^{l}.
        \end{equation} Therefore,
        \begin{equation}
            \alpha'\in \Span\{(\id_{\mathbb{R}^{m-k+1}},e_{1}),(\id_{\mathbb{R}^{m-k+1}},e_{2}),\dots,(\id_{\mathbb{R}^{m-k+1}},e_{m-k}),(\Aff(g_{\gamma}),e_{m-k+1}+\Tran(g_{\gamma}))\};
        \end{equation}
        \bigskip
        \item By Property \ref{property: normal} and Theorem \ref{Thm3.1}, we consider the following two commuting short exact sequences:
        \begin{equation}
            \xymatrix{
            0\ar[r]&\Deck(\mathbb{R}^{m-k+1}/T^{m-k}\times \mathbb{R})\ar[r]\ar@{=}[d]&\Deck(\mathbb{R}^{m-k+1}/F_{\gamma})\ar[r]\ar@{=}[d]&\Deck(T^{m-k}\times \mathbb{R}/F_{\gamma})\ar[r]\ar@{=}[d]&0\\
            0\ar[r]&\pi_{1}(T^{m-k})\ar[r]&\pi_{1}(F_{\gamma})\ar[r]&\pi_{1}(S^{1})\cong \frac{\pi_{1}(F_{\gamma})}{\pi_{1}(T^{m-k})}\ar[r]&0.
            }
        \end{equation}   
        The projection of the path from $0$ to $(e_{m-k+1}+\Tran(g_{\gamma}))$ in $\mathbb{R}^{m-k+1}$ descends to a loop in $F_{\gamma}$ that generates $\pi_{1}(S^{1})$. The remaining generators $(\id,e_{1}),(\id,e_{2}),\dots,(\id,e_{m-k})$ of $\Deck(\mathbb{R}^{m-k+1}/F_{\gamma})$ clearly correspond to the generators of $\pi_{1}(T^{m-k})$. 
    \end{enumerate}
\end{proof}
  
	The next step is to prove that the linear part $\Aff(g_{\gamma})$ equals the identity, which implies that the structure group of $F_{\gamma}$ reduces to pure translations.

     We first check that $\pi_{1}(F_{\gamma})$ is abelian by computing its commutator subgroup.
     \begin{proposition}
         $[\pi_{1}(F_{\gamma}),\pi_{1}(F_{\gamma})]=\{e\}$.
     \end{proposition}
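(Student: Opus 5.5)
The plan is to transfer the information from Theorem~\ref{Thm3.1}, which applies to $M\to N$, to the pullback $F_{\gamma}=\gamma^{*}(M)$. Note that $F_{\gamma}$ itself need not satisfy condition~(\ref{eq:difference}), so we cannot apply Theorem~\ref{Thm3.1} to it directly. Instead we use the bundle map $\bar{\gamma}:F_{\gamma}\to M$ covering $\gamma$. This map restricts to the identity on fibers, so on fundamental groups it gives a commutative diagram of short exact sequences (the bottom row being exact by Theorem~\ref{Thm3.1}(1)):
\begin{equation*}
\xymatrix{
0\ar[r]&\pi_{1}(T^{m-k})\ar[r]\ar@{=}[d]&\pi_{1}(F_{\gamma})\ar[r]\ar[d]^{\bar{\gamma}_{*}}&\pi_{1}(S^{1})\ar[r]\ar[d]^{\gamma_{*}}&0\\
0\ar[r]&\pi_{1}(T^{m-k})\ar[r]^-{l_{*}}&\pi_{1}(M)\ar[r]^{f_{*}}&\pi_{1}(N)\ar[r]&0.
}
\end{equation*}

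By the Property just proved, $\pi_{1}(F_{\gamma})$ is generated by the fiber translations $t_{i}=(\id,e_{i})$ for $1\le i\le m-k$, together with $\tau=(\Aff(g_{\gamma}),e_{m-k+1}+\Tran(g_{\gamma}))$. The fiber generators commute with one another. Since $\tau$ is the only other generator, the commutator subgroup is the normal closure of the commutators $[\tau,t_{i}]$, so it suffices to show $[\tau,t_{i}]=e$ for every $i$. A direct computation in $\Deck(\mathbb{R}^{m-k+1}/F_{\gamma})$ gives
\begin{equation*}
\tau t_{i}\tau^{-1}=(\id,\Aff(g_{\gamma})e_{i}),
\qquad\text{hence}\qquad
[\tau,t_{i}]=(\id,(\Aff(g_{\gamma})-\id)e_{i}).
\end{equation*}
In particular $[\tau,t_{i}]$ lies in the fiber subgroup $\pi_{1}(T^{m-k})\subset\pi_{1}(F_{\gamma})$.

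Now apply $\bar{\gamma}_{*}$. Since $\bar{\gamma}_{*}$ is a homomorphism, it sends commutators to commutators, so
\begin{equation*}
\bar{\gamma}_{*}([\tau,t_{i}])\in l_{*}(\pi_{1}(T^{m-k}))\cap[\pi_{1}(M),\pi_{1}(M)]=\{e\}
\end{equation*}
by Theorem~\ref{Thm3.1}(2). Equivalently, we could use Theorem~\ref{Thm3.1}(3): $\bar{\gamma}_{*}(t_{i})$ lies in the center of $\pi_{1}(M)$, so it commutes with $\bar{\gamma}_{*}(\tau)$.

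The main point to verify is that $\bar{\gamma}_{*}$ is injective on the fiber subgroup. This follows from the diagram: $\bar{\gamma}_{*}$ restricted to $\pi_{1}(T^{m-k})$ equals $l_{*}$, which is injective by Theorem~\ref{Thm3.1}(1). Some care is needed with basepoints, and with the identification of the fiber of $F_{\gamma}$ over the basepoint with the fiber of $M$ over $\gamma(0)$; we handle this by choosing basepoints compatibly with the pullback construction. Injectivity then forces $[\tau,t_{i}]=e$, that is, $(\Aff(g_{\gamma})-\id)e_{i}=0$ for all $i$.

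Hence all generators of $\pi_{1}(F_{\gamma})$ commute, and $[\pi_{1}(F_{\gamma}),\pi_{1}(F_{\gamma})]=\{e\}$. As a byproduct, the argument already yields $\Aff(g_{\gamma})=\id$, which is the next step in the paper's plan.
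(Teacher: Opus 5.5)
Your argument is correct and uses the same mechanism as the paper: push commutators forward along the bundle map $F_{\gamma}\to M$, land in $l_{*}(\pi_{1}(T^{m-k}))\cap[\pi_{1}(M),\pi_{1}(M)]=\{e\}$ by Theorem~\ref{Thm3.1}(2), and pull back using injectivity of $l_{*}$ on the fiber subgroup. The paper differs only in placing the whole commutator subgroup inside the fiber subgroup abstractly, since its image in the abelian group $\pi_{1}(S^{1})$ is trivial, whereas you use the explicit deck-group presentation to reduce to the generator commutators $[\tau,t_{i}]=(\id,(\Aff(g_{\gamma})-\id)e_{i})$, which also gives $\Aff(g_{\gamma})=\id$ immediately.
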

      \begin{proof}
      By Theorem \ref{Thm2.2} and Theorem \ref{Thm3.1}, we consider the following diagram
     \begin{equation}
        \xymatrix{
        0 \ar[r]& \pi_{1}(T^{m-k}) \ar[r]^{l'_{*}}\ar@{=}[d]& \pi_{1}(F_{\gamma}) \ar[r]^{\pi^{*}} \ar[d]^{\gamma^{*}}& \pi_{1}(S^{1}) \ar[r]\ar[d]^{\gamma_{*}}& 0\\
        0 \ar[r]& \pi_{1}(T^{m-k}) \ar[r]_{l_{*}}& \pi_{1}(M) \ar[r]_{f_{*}}& \pi_{1}(N) \ar[r]& 0.
        }
    \end{equation}
    Since $\pi_{1}(S^{1})\cong \mathbb{Z}$ is abelian, we have
    \begin{equation}
        \pi^{*}([\pi_{1}(F_{\gamma}),\pi_{1}(F_{\gamma})])=
    [\pi^{*}(\pi_{1}(F_{\gamma})),\pi^{*}(\pi_{1}(F_{\gamma}))]=\{e\}.
    \end{equation}
    By the exactness of the sequence, we obtain 
    \begin{equation}
         [\pi_{1}(F_{\gamma}),\pi_{1}(F_{\gamma})]\subseteq l'_{*}(\pi_{1}(T^{m-k})).
    \end{equation} 
    Since $l_{*}'$ is injective, its inverse $(l'_{*})^{-1}:\im(l_{*}') \to \pi_{1}(T^{m-k})$ is a well-defined injection. We restrict this inverse to the subgroup $[\pi_{1}(F_{\gamma}),\pi_{1}(F_{\gamma})]\subseteq\im(l_{*}')$.
 
    By the commutativity of the diagram, 
    \begin{equation}
        \gamma^{*}([\pi_{1}(F_{\gamma}),\pi_{1}(F_{\gamma})])\subseteq\pi_{1}(T^{m-k}).
    \end{equation} However, 
     \begin{equation}
        \gamma^{*}([\pi_{1}(F_{\gamma}),\pi_{1}(F_{\gamma})])=[\gamma^{*}(\pi_{1}(F_{\gamma})),\gamma^{*}(\pi_{1}(F_{\gamma}))]\subseteq [\pi_{1}(M),\pi_{1}(M)],
     \end{equation}so
    \begin{equation}
        \gamma^{*}([\pi_{1}(F_{\gamma}),\pi_{1}(F_{\gamma})])\subseteq [\pi_{1}(M),\pi_{1}(M)]\cap \text{Im}(l_{*})=\{e\}.
    \end{equation}
    Since $\gamma^{*}|_{[\pi_{1}(F_{\gamma}),\pi_{1}(F_{\gamma})]}=l_{*}\circ (l'_{*})^{-1}$, the restriction $\gamma^{*}|_{[\pi_{1}(F_{\gamma}),\pi_{1}(F_{\gamma})]}$ is also injective. Consequently, 
    \begin{equation}
         [\pi_{1}(F_{\gamma}),\pi_{1}(F_{\gamma})]=\{e\}.
    \end{equation}
    \end{proof}
  
    By Property~\ref{property: normal}, $\pi_{1}(F_{\gamma})\cong \Deck(\mathbb{R}^{m-k+1}/F_{\gamma})$, hence, $\Deck(\mathbb{R}^{m-k+1}/F_{\gamma})$ is also abelian. This commutativity implies that in the deck transformation group, we have:
	\begin{equation}
		(\id,e_{i})\cdot(\Aff(g_{\gamma}),e_{m-k+1}+\Tran(g_{\gamma}))=(\Aff(g_{\gamma}),e_{m-k+1}+\Tran(g_{\gamma}))\cdot(\id,e_{i}).
	\end{equation}
	Computing both sides shows that $\Aff(g_{\gamma})(e_{i})=e_{i}$ for each $i\in \{1,\dots,m-k\}$. Hence, $\Aff(g_{\gamma})=\id$, and the structure group of $F_{\gamma}$ reduces to $T^{m-k}$.
    
    This completes the proof of Theorem~\ref{the: Structure Theorem}(1).\\

With the principal bundle structure secured, we now address the topological splitting question—namely, when such a bundle is actually trivial. The following proof of Theorem~\ref{the:Topological Splitting Theorem} provides a complete characterization via obstruction theory. We show that under the splitting condition of the homotopy exact sequence, the Euler class vanishes, implying the triviality of the principal torus bundle.
\begin{proof of theorem 1.2} 
     We begin by constructing a cellular decomposition of the base manifold $N$ suitable for obstruction theory. By Whitehead's theorem, any compact manifold is homotopy equivalent to a finite-dimensional CW complex. Let $K=\displaystyle\bigcup_{i=0}^{k}K^{i}$ be a cell complex on $N$ constructed as follows:
     \begin{enumerate}
         \item [$\bullet$] $K^{0}$ is a finite set of points in $N$;
         \item [$\bullet$] For each $i\ge0$, the $(i+1)$-skeleton $K^{i+1}$ is obtained from $K^{i}\displaystyle\coprod_{\alpha\in J_{i+1}}D_{\alpha}^{i+1}$ by identifying $(\alpha,x)$ with $\varphi_{\alpha}(x)$ for $x\in S^{i}\cong\partial D^{i+1}$, where $\varphi_{\alpha}:S^{i}\to K^{i}$ are attaching maps;
        \item [$\bullet$] The index sets $J_{i}=\{\alpha_{1}^{i},\dots,\alpha_{|J_{i}|}^{i}\}$ are finite.
     \end{enumerate}
	 Denote $\rho_{\alpha}^{(i)}:D_{\alpha}^{i}\to K^{i}$ the characteristic maps embedding the cells into the complex, with  $\rho_{\alpha}^{(i)}|_{\partial D_{\alpha}^{i}}=\varphi_{\alpha}$.
     
     We prove that the Euler class $C_{1}(M)$ vanishes using the definition. Let $\sigma_{\text{old}}^{(1)}: K^{1}\to M$ be a section on the $1$-cells of $N$. It suffices to construct a new section $\sigma_{\text{new}}^{(1)}: K^{1}\to M$ on the $1$-cells of $N$ satisfying: 
     
     \begin{enumerate}[(a)]
         \item $\sigma_{\text{new}}^{(1)}|_{K^{0}}=\sigma_{\text{old}}^{(1)}|_{K^{0}}$;
         \item $\sigma_{\text{new}}^{(1)}$ can be extended to $K^{2}$.
         \end{enumerate}

	The section $\sigma_{\text{old}}^{(1)}$ may not have the correct homotopical properties. Let $(l_{*},\zeta):\pi_{1}(T^{m-k})\times \pi_{1}(N)\to \pi_{1}(M)$ be the splitting map from sequence (\ref{se: short exact sequence (2)}). The image of $ (\sigma_{\text{old}}^{(1)})_{*}$ may not lie in $\im(\zeta)$; we need to modify $\sigma_{\text{old}}^{(1)}$ as follows:
    \begin{enumerate}
        \item [$\circ$] Since $ K^{1} $ is a graph, it is homotopy equivalent to $\displaystyle\bigvee_{i=1}^{q}S_{i}^{1}$, so $\pi_{1}(K^{1})$ is a finitely generated free group. Let $\{c_{1},c_{2},\dots,c_{q}\}$ be generators of $\pi_{1}(K^{1})$, each corresponding to a circle in the wedge;
        \item [$\circ$] Let $f_1:K^{1}\to \displaystyle\bigvee_{i=1}^{q}S_{i}^{1}$ and $ f_2:\displaystyle\bigvee_{i=1}^{q}S_{i}^{1}\to  K^{1}$ be homotopy equivalences;
        \item [$\circ$] For each $j=1,2,\dots,q$, choose $ g_{j}:S_{j}^{1}\to T^{m-k}$ such that: 
        \begin{equation}
     	g_{j}+l_{*}^{-1}\circ(\id_{\pi_{1}(M)}-\zeta\circ f_{*})(\sigma_{\text{old}}^{(1)}\circ f_2|_{S_{j}^{1}})=[0].
        \end{equation}
        Then $g_{j}\cdot(\sigma_{\text{old}}^{(1)}\circ f_2|_{S_{j}^{1}})$ lies in $\im(\zeta)$ with $g_{j}$ maps the basepoint of $S^{1}$ to the identity of $T^{m-k}$. Combine these to form $g:\displaystyle\bigvee_{i=1}^{q}S_{i}^{1}\to T^{m-k}$ and define the corrected section: 
        \begin{equation}
     	\sigma_{\text{new}}^{(1)}:=(g\circ f_1) \cdot\sigma_{\text{old}}^{(1)};
         \end{equation}
      \item [$\circ$] Next, we verify that $\im((\sigma_{\text{new}}^{(1)})_{*})$ lies in $\im(\zeta)$.
     For each generator $c_{i}$:
     \begin{equation}
         \sigma_{\text{new}}^{(1)}(c_{i})=(g\circ f_1)(c_{i}) \cdot\sigma_{\text{old}}^{(1)}(c_{i})\sim g_{i} \cdot(\sigma_{\text{old}}^{(1)}\circ f_2|_{S_{i}^{1}}),
     \end{equation}
     which lies in $\im(\zeta)$. Hence, $\im((\sigma_{\text{new}}^{(1)})_{*})\subseteq \im(\zeta)$.
    \end{enumerate}
    
	It is obvious that (a) holds. We next prove (b): For each $2$-cell $e_{\alpha}^{2}$ with characteristic map $\rho_{\alpha}^{(2)}:D_{\alpha}^{2}\to K^{2}$:
     \begin{enumerate}
        \item [$\circ$] Let $\text{proj}_{2}: S^{1}\times[0,1]\to D^{2}$ be the standard quotient map with $\text{proj}_{2}(S^{1}\times\{0\})=\{0\}$;
        \item [$\circ$]  Define $h_{\alpha}^{(2)}:=\rho_{\alpha}^{(2)}\circ \text{proj}_{2}$. Then the restriction $h_{\alpha}^{(2)}(\cdot,1)$ has a lift $\sigma_{\text{new}}^{(1)}\circ \varphi_{\alpha}^{(2)}$;
        \item [$\circ$] By Covering Homotopy Theorem \ref{the:Covering Homotopy Theorem}, there exists a lift $H_{\alpha}^{(2)}$ of $h_{\alpha}^{(2)}$ with $H_{\alpha}^{(2)}(\cdot,0)$ lying in the fiber over $\rho_{\alpha}^{(2)}(0)$. Since $H_{\alpha}^{(2)}(\cdot,0)\in \im(l_{*})\cap \im((\sigma_{\text{new}}^{(1)})_{*})=\{e\}$, the loop $H_{\alpha}^{(2)}(\cdot,0)$ is contractible;
        \item [$\circ$] By the injectivity of $l_{*}$ , there exists a homotopy $ g_{\alpha}^{(2)}:S^{1}\times [0,1]\to T^{m-k}$ from $H_{\alpha}^{(2)}(\cdot,0)$ to a constant map $g_{\alpha}^{(2)}(\cdot,0)\cdot H_{\alpha}^{(2)}(\cdot,0)$;
        \item [$\circ$] Define the extended section over the $2$-cell by quotienting: 
        \begin{equation}
            \widetilde{\rho}_{\alpha}^{(2)}:D_{\alpha}^{2}\to M,~~ \widetilde{\rho}_{\alpha}^{(2)}=\text{the quotient of}~g_{\alpha}^{(2)}\cdot H_{\alpha}^{(2)}.
        \end{equation}
        \end{enumerate}
        
	Define the $2$-skeleton section as: \[\sigma^{(2)}:=
	 \begin{cases}
	 	\sigma_{\text{new}}^{(1)}(x)& x\in K^{1};\\
	 	\widetilde{\rho}_{\alpha}^{(2)}(x)& x\in D_{\alpha}^{2}~.
	 \end{cases} \] After identifying points via the attaching maps, we obtain a section $\sigma^{(2)}: K^{2}\to M$ such that $\sigma^{(2)}|_{K^{1}}=\sigma_{\text{new}}^{(1)}$.

    Our construction yields the vanishing of the primary obstruction $[\mathcal{O}(\sigma_{\text{new}}^{(1)})]\in H^{2}(N;\mathbb{Z}^{m-k})$. By Lemma~\ref{lem2.8}, $[\mathcal{O}(\sigma_{\text{old}}^{(1)})]=0$.
    It follows that $C_{1}(M)=0$, and by Corollary \ref{Cor:2.14}, the principal $T^{m-k}$-bundle $M\to N$ is trivial. This completes the proof of Theorem~\ref{the:Topological Splitting Theorem}.    
     \end{proof of theorem 1.2}
     
\section{Classification of Torus Bundles via Deck Transformations}\label{sec: Comp}
    In this section, we classify all torus bundles $M$ appearing in Theorem~\ref{the: Structure Theorem} by analyzing the deck transformation groups of the covering spaces constructed in Section \ref{sec: Homo}. We establish the explicit structure of these groups and prove the converse statement (see Theorem~\ref{Thm 5.7}).
	
    Following the results of Section \ref{sec: Homo} and Theorem~\ref{the:Topological Splitting Theorem}, we obtain finite normal covering spaces $\hat{M}$ of $M$ and $\hat{N}$ of $N$ such that $\hat{M}$ is diffeomorphic to $\hat{N}\times T^{m-k}$. We now analyze the deck transformation group $\Deck(\hat{M}/M)$.

  \begin{theorem}
      Under the assumption of Theorem~\ref{the: Structure Theorem}, 
      the deck transformation group $\Deck(\hat{M}/M)$ is isomorphic to the group consisting of all transformations of the form
    	 \begin{equation}
    	 	\Deck(\hat{M}/M)=\{(\alpha,\text{pr}_{T^{m-k}}\circ T_{\alpha}+\id_{T^{m-k}})|~\alpha \in \Deck(\hat{N}/N)\},
    	 	\end{equation}  
            where the maps $T_{\alpha}:\hat{N}\to\mathbb{R}^{m-k}$ are smooth and satisfy the following cocycle and commutativity conditions for all $\alpha,\beta\in \Deck(\hat{N}/N)$ and $x\in\hat{N}$:
    	 	\begin{equation}\label{group conditions}
            \begin{aligned}
               \text{pr}_{T^{m-k}}\circ T_{\alpha\cdot\beta}(x)&=\text{pr}_{T^{m-k}}\circ T_{\alpha}(\beta(x))+\text{pr}_{T^{m-k}}\circ T_{\beta}(x)~~~(\text{cocycle condition});\\
               T_{\alpha}(\beta(x))-T_{\alpha}(x)&=T_{\beta}(\alpha(x))-T_{\beta}(x)~~~(\text{commutativity condition}).	
            \end{aligned}
    	 	\end{equation}    
  \end{theorem}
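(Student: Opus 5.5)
We work throughout in the trivialization $\hat{M}\cong\hat{N}\times T^{m-k}$ given by Theorem~\ref{the:Topological Splitting Theorem}, applied via Proposition~\ref{prop3.5}(4). We choose this diffeomorphism to be an isomorphism of principal $T^{m-k}$-bundles. This is possible because $\hat{M}=\text{pr}_N^{*}(M)$ carries the pulled-back principal action from Theorem~\ref{the: Structure Theorem}(1), and Corollary~\ref{Cor:2.14} trivializes it as a principal bundle. The $T^{m-k}$-action on $M$ lifts to $\hat{M}$ and commutes with the covering map $\text{pr}_M$. It follows that every deck transformation $\Phi\in\Deck(\hat{M}/M)$ is $T^{m-k}$-equivariant. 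Indeed, for $t\in T^{m-k}$ the map $\Phi\circ t\circ\Phi^{-1}\circ t^{-1}$ is a deck transformation, and it depends continuously on $t$. Since $\Deck(\hat{M}/M)$ is discrete and $T^{m-k}$ is connected, this map equals its value at $t=[0]$, namely the identity.

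The first step is to pin down the shape of $\Phi$. Since $\text{pr}_M$ covers $\text{pr}_N$ (Proposition~\ref{prop3.5}(1)), $\Phi$ maps fibers of $\text{pr}_{\hat N}$ to fibers. Hence $\Phi$ induces a map $\alpha$ on $\hat N$ with $\text{pr}_N\circ\alpha=\text{pr}_N$, so $\alpha\in\Deck(\hat N/N)$. Equivariance then forces
\[
\Phi(x,t)=(\alpha(x),\,t+\phi_\alpha(x))
\]
for some smooth $\phi_\alpha:\hat N\to T^{m-k}$. The assignment $\Phi\mapsto\alpha$ is a homomorphism. It is injective: if $\alpha=\id$, then $\Phi$ preserves every fiber and commutes with translations, so $\phi_\alpha$ takes values in the discrete set $\{t : \text{pr}_M(x,t)=\text{pr}_M(x,[0])\}\cap$ fiber. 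This set is trivial because $\text{pr}_M$ is injective on fibers, by the pullback description. The homomorphism is surjective since both groups have the same order by Proposition~\ref{prop3.5}(2).

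The second step lifts $\phi_\alpha$ to a map $T_\alpha:\hat N\to\mathbb{R}^{m-k}$ with $\phi_\alpha=\text{pr}_{T^{m-k}}\circ T_\alpha$. By Proposition~\ref{prop: lifting cri}, this lift exists if and only if $(\phi_\alpha)_*$ kills $\pi_1(\hat N)$. I expect this step to be the main obstacle, and I plan to use Theorem~\ref{Thm3.1}(3) together with the direct-product structure $\pi_1(\hat M)\cong\pi_1(T^{m-k})\times\pi_1(\hat N)$. The section $\hat N\times\{[0]\}$ represents the factor $h^{-1}(K_1)$. Its image under $\Phi$ is the section $x\mapsto(\alpha(x),\phi_\alpha(x))$, whose class in $\pi_1(\hat M)$ differs from $h^{-1}(K_1)$ by $(\phi_\alpha)_*$. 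Conjugation in $\pi_1(M)$ preserves $h^{-1}(K_1)$, because that subgroup contains the commutator subgroup and is therefore normal. We conclude that $(\phi_\alpha)_*$ vanishes on $\pi_1(\hat N)$, possibly after re-choosing the trivialization so that $\hat N\times\{[0]\}$ realizes $h^{-1}(K_1)$. If this normality argument runs into trouble, the fallback is to check it directly against the homology splitting of Lemma~\ref{lem3.4}.

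The last step derives the two identities. The cocycle condition follows from $\Phi_{\alpha\beta}=\Phi_\alpha\circ\Phi_\beta$: composing the maps gives $\phi_{\alpha\beta}(x)=\phi_\alpha(\beta(x))+\phi_\beta(x)$. The commutativity condition comes from the fact that $\Deck(\hat N/N)\cong\text{Tor}(H_1(N))$ is abelian, so $\Phi_\alpha\Phi_\beta=\Phi_\beta\Phi_\alpha$. Expanding both sides gives the identity modulo $\mathbb{Z}^{m-k}$. The resulting integer difference is constant in $x$ by continuity. We then show this constant vanishes: $\Phi_\alpha$ and $\Phi_\beta$ lift to commuting deck transformations of $\hat N\times\mathbb{R}^{m-k}$ over $M$, because the elements of $\pi_1(M)$ lying above $l_*(\pi_1(T^{m-k}))$ commute with it by Theorem~\ref{Thm3.1}(3). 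Alternatively, we renormalize the lifts $T_\alpha$ by integer constants to absorb the difference.
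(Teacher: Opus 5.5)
Up through the cocycle condition your outline follows the paper (Lemmas~\ref{lem5.2}--\ref{lem5.4}). The one variation is fine: you check the lifting criterion for $\phi_\alpha$ directly, whereas the paper lifts $\Deck(\hat M/M)$ to the normal cover $\hat N\times\mathbb{R}^{m-k}\to M$ (Lemma~\ref{lem6.4}). Both rest on the normality of $h^{-1}(K_1)$ and on $h^{-1}(K_1)\cap\im(l_*)=\{e\}$.

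The genuine gap is the commutativity condition. Put $c_{\alpha,\beta}:=T_\beta(\alpha(x))+T_\alpha(x)-T_\alpha(\beta(x))-T_\beta(x)\in\mathbb{Z}^{m-k}$.
\begin{itemize}
\item Your fallback, renormalizing the lifts by integer constants, cannot work. Replacing $T_\alpha,T_\beta$ by $T_\alpha+\vec n_\alpha,\,T_\beta+\vec n_\beta$ leaves $c_{\alpha,\beta}$ unchanged, because each constant enters once with each sign.
\item Your main argument gives an insufficient reason. Because $\alpha\beta=\beta\alpha$, the commutator of the lifts $\tilde\Phi_\alpha(x,r)=(\alpha(x),r+T_\alpha(x))$ and $\tilde\Phi_\beta$ is automatically a fiber translation, namely by $\pm c_{\alpha,\beta}$. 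Centrality of $l_*(\pi_1(T^{m-k}))$ (Theorem~\ref{Thm3.1}(3)) only adds that this translation does not depend on which lifts you chose. It does not make it zero: a central extension with abelian quotient need not be abelian (the integer Heisenberg group is a central extension of $\mathbb{Z}^2$ by $\mathbb{Z}$).
\end{itemize}

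You need one further input, and you already have two candidates.
\begin{itemize}
\item[(a)] As in the paper's Lemma~\ref{lem5.6}: $\Deck(\hat N\times\mathbb{R}^{m-k}/M)\cong\pi_1(M)/h^{-1}(K_1)$ is abelian because $[\pi_1(M),\pi_1(M)]\subseteq h^{-1}(K_1)$ (Proposition~\ref{prop3.5}(3)). This is the very fact you used for normality in Step~2. So the lifts commute and $c_{\alpha,\beta}=0$.
\item[(b)] Use finiteness of $\Deck(\hat N/N)$. If $\alpha^{L_1}=\beta^{L_2}=\id$, evaluate the identity defining $c_{\alpha,\beta}$ at $\alpha^{j}\beta^{l}(x_0)$ and sum over $0\le j<L_1$, $0\le l<L_2$. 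Using $\alpha\beta=\beta\alpha$, both differences telescope, giving $L_1L_2\,c_{\alpha,\beta}=0$, exactly as in the proof of Theorem~\ref{Thm 5.7}(2).
\end{itemize}

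A smaller point concerns Step~2: the re-choice of trivialization is in general necessary, not just possible. Twisting a good trivialization by $\lambda:\hat N\to T^{m-k}$ replaces $\phi_\alpha$ by $\phi_\alpha+\lambda\circ\alpha-\lambda$, which fails to lift as soon as $\lambda_*\circ\alpha_*\neq\lambda_*$. So you should say how the re-choice is made. The horizontal subgroup of an arbitrary principal trivialization is the graph, over $h^{-1}(K_1)\cong\pi_1(\hat N)$, of a homomorphism $\mu:\pi_1(\hat N)\to\mathbb{Z}^{m-k}$. Twisting by a map $\lambda$ with $\lambda_*=\pm\mu$ (sign per your convention) moves it onto $h^{-1}(K_1)$. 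Such a $\lambda$ exists because $T^{m-k}$ is a $K(\mathbb{Z}^{m-k},1)$.
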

  To further analyze the structure of $\Deck(\hat{M}/M)$, we first establish its compatibility with the global torus action.
    \begin{lemma}\label{lem5.2}
        The global torus action on $\hat{M}$ commutes with every element of $\Deck(\hat{M}/M)$.
    \end{lemma}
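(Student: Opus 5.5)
The plan is to work in the pullback model of $\hat{M}$ from Proposition~\ref{prop3.5}(1), where both the torus action and the deck transformations have explicit formulas. By Proposition~\ref{prop3.5}(1), $\hat{M}\cong \text{pr}_{N}^{*}(M)=\{(x,p)\in \hat{N}\times M \mid \text{pr}_{N}(x)=f(p)\}$, with covering map $\text{pr}_{M}(x,p)=p$. By Theorem~\ref{the: Structure Theorem}(1), $M$ is a principal $T^{m-k}$-bundle with a smooth free $T^{m-k}$-action preserving the fibers of $f$. The pullback therefore carries the induced principal action $t\cdot(x,p):=(x,t\cdot p)$, which is well defined because $f(t\cdot p)=f(p)$. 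This is the global torus action on $\hat{M}$.

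First I will check that the diffeomorphism $\hat{M}\cong \hat{N}\times T^{m-k}$ from Theorem~\ref{the:Topological Splitting Theorem} can be chosen as an isomorphism of principal bundles. That theorem was proved by showing $C_{1}(\hat{M})=0$ and applying Corollary~\ref{Cor:2.14}, so the trivialization comes from a global section $s$ via $(x,t)\mapsto t\cdot s(x)$. This map is $T^{m-k}$-equivariant. Hence the action by translation in the $T^{m-k}$ factor of $\hat{N}\times T^{m-k}$ agrees with the pulled-back action above, and it suffices to prove the lemma in the pullback model.

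Next I will describe $\Deck(\hat{M}/M)$ explicitly. For each $\alpha\in\Deck(\hat{N}/N)$, define $\Phi_{\alpha}(x,p):=(\alpha(x),p)$. This lies in $\text{pr}_{N}^{*}(M)$ because $\text{pr}_{N}\circ\alpha=\text{pr}_{N}$, and it satisfies $\text{pr}_{M}\circ\Phi_{\alpha}=\text{pr}_{M}$, so $\Phi_{\alpha}$ is a deck transformation. The assignment $\alpha\mapsto\Phi_{\alpha}$ is an injective homomorphism. By Proposition~\ref{prop3.5}(2), $\Deck(\hat{M}/M)$ and $\Deck(\hat{N}/N)$ are finite groups of the same order. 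Therefore every deck transformation of $\hat{M}/M$ equals $\Phi_{\alpha}$ for some $\alpha$. The commutation is then immediate:
\begin{equation*}
\Phi_{\alpha}(t\cdot(x,p))=(\alpha(x),t\cdot p)=t\cdot\Phi_{\alpha}(x,p).
\end{equation*}

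The step I expect to be the main obstacle is the identification of the two torus actions, namely that the product splitting respects the principal action. If that identification fails, I would fall back on a covering-space argument. For a fixed deck transformation $\phi$ and $t\in T^{m-k}$, the maps $\phi\circ t$ and $t\circ\phi$ both lift the same map $t:M\to M$ through $\text{pr}_{M}$. As $t$ varies along a path from $[0]$, these two lifts vary continuously, and at $t=[0]$ they agree. By uniqueness of lifts they agree at one point, and hence everywhere, for all $t$, using that $T^{m-k}$ is connected.
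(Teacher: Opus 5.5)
Your argument is correct, but it takes a different route from the paper.

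\textbf{The paper's route.} The paper never identifies the deck transformations. It uses $\text{pr}_M(g\cdot z)=g\cdot\text{pr}_M(z)$ to show that, for $g\in T^{m-k}$, the conjugate $\text{Ad}(g)(\hat\alpha)=g^{-1}\circ\hat\alpha\circ g$ again covers the identity of $M$. Hence $g\mapsto\text{Ad}(g)$ is a continuous homomorphism from the connected group $T^{m-k}$ into the discrete group $\Aut(\Deck(\hat M/M))$. It is therefore trivial, and $\text{Ad}(g)(\hat\alpha)=\hat\alpha$. This is essentially your fallback path-lifting argument.

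\textbf{Your route.} You pass to the fiber-product model $\text{pr}_N^*(M)$ of Proposition~\ref{prop3.5}(1) and exhibit the deck transformations $\Phi_\alpha=(\alpha,\id_M)$. The order count from Proposition~\ref{prop3.5}(2) shows there are no others; so does the simpler observation that the $\Phi_\alpha$ already act transitively on the fibers of $\text{pr}_M$. Commutation is then tautological, because the torus acts only on the $M$-coordinate.

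\textbf{What your route buys.} It gives more than the lemma. Write points of $\hat M$ as $t\cdot s(x)$ for the global section $s$. Since $\Phi_\alpha(s(x))$ lies in the fiber over $\alpha(x)$, it equals $T'_\alpha(x)\cdot s(\alpha(x))$. Hence $\Phi_\alpha$ becomes $(x,t)\mapsto(\alpha(x),t+T'_\alpha(x))$ in product coordinates, and $\alpha\mapsto\Phi_\alpha$ inverts $De$. This already contains Lemmas~\ref{lem5.3} and~\ref{lem5.4}.

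Your route also makes explicit a point the paper uses silently. The splitting $\hat M\cong\hat N\times T^{m-k}$ from Theorem~\ref{the:Topological Splitting Theorem} comes from Corollary~\ref{Cor:2.14}, so it is an isomorphism of principal bundles and hence $T^{m-k}$-equivariant. This is what lets $\text{pr}_M$ intertwine the product action with the action on $M$. The paper's computation needs exactly this fact.

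\textbf{What the paper's route buys.} It is more economical and more general. It needs only that equivariance, connectedness of the acting group, and discreteness of the deck group. It uses neither the fiber-product model nor any index count.

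\textbf{A small logical remark.} Your fallback is not actually independent of the identification step you worried about. Saying that $\phi\circ t$ and $t\circ\phi$ both cover $t$ on $M$ is precisely the equivariance of $\text{pr}_M$. This is harmless here, because, as you showed, the identification does hold.
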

    \begin{proof}
        For any $g\in T^{m-k}$ and $\hat{\alpha}\in \Deck(\hat{M}/M)$, consider the conjugation:
        \begin{equation}
            \text{Ad}(g)(\hat{\alpha}):=g^{-1}\circ\hat{\alpha}\circ g.
        \end{equation}
        Let $\text{pr}_{M}:\hat{M}\to M$ be the covering map. For any $(y,g')\in \hat{N}\times T^{m-k}\cong\hat{M}$:
        \begin{equation}
        \begin{aligned}
             \text{pr}_{M}(\text{Ad}(g)(\hat{\alpha})(y,g'))&=\text{pr}_{M}(g^{-1}\circ\hat{\alpha}\circ g(y,g'))\\
             &=g^{-1}\cdot \text{pr}_{M}(\hat{\alpha}\circ g(y,g'))\\
             &=g^{-1}\cdot \text{pr}_{M}(g(y,g'))\\
             &=g^{-1}\cdot g\cdot \text{pr}_{M}(y,g')=\text{pr}_{M}(y,g').
        \end{aligned}  
        \end{equation}

   By the continuity of $\text{Ad}(g)(\hat{\alpha})$ and the connectedness of $\hat{N}\times T^{m-k}$, it follows that
   $\text{Ad}(g)(\hat{\alpha})\in \Deck(\hat{M}/M)$. Thus, we obtain a continuous homomorphism:
   \begin{equation}
       \text{Ad}: T^{m-k}\to \Aut (\Deck(\hat{M}/M)),~~~\text{Ad}(g):(\hat{\alpha}\mapsto g^{-1}\circ\hat{\alpha}\circ g).
   \end{equation}
 Since $\Aut (\Deck(\hat{M}/M))$ is discrete, $\text{Ad}$ must be constant. Evaluating at the identity $g=[0]$ gives $\text{Ad}(g)(\hat{\alpha})=\hat{\alpha}$ for all $g\in T^{m-k}$. Therefore,
   \begin{equation}
       g\circ\hat{\alpha}=g\circ \text{Ad}(g)(\hat{\alpha})=g\circ g^{-1}\circ\hat{\alpha}\circ g=\hat{\alpha}\circ g~,
   \end{equation}
   which shows that the actions commute.
   \end{proof}
   
   We now exploit the product structure $\hat{M}\cong \hat{N}\times T^{m-k}$.
    \begin{lemma}\label{lem5.3}
    The deck transformation $\hat{\alpha}$ has the following form:
    \begin{equation}
        \hat{\alpha}(x,g)=(\alpha(x),T'_{\hat{\alpha}}(x)+g),~~~\forall x\in\hat{N},g\in T^{m-k},
    \end{equation}
       where $\alpha\in \Deck(\hat{N}/N)$ and $T'_{\hat{\alpha}}:\hat{N}\to T^{m-k}$ is a smooth map.
    \end{lemma}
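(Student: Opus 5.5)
We need two facts: $\hat{\alpha}$ covers a deck transformation $\alpha$ of $\hat{N}/N$, and $\hat\alpha$ is equivariant for the torus action. The product form then follows by evaluating on the slice $\hat N\times\{[0]\}$.

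\textbf{Step 1: $\hat\alpha$ covers some $\alpha\in\Deck(\hat N/N)$.} By Proposition~\ref{prop3.5}(1), $\hat M$ is the pullback $\text{pr}_N^{*}(M)$ and $f\circ\text{pr}_M=\text{pr}_N\circ\text{pr}_{\hat N}$. Since $\text{pr}_M\circ\hat\alpha=\text{pr}_M$, the map $\hat\alpha$ sends each fiber $\text{pr}_{\hat N}^{-1}(x)$ into a fiber lying over the same point $\text{pr}_N(x)$ of $N$. Fibers are connected and the fibers of $\text{pr}_N$ are discrete, so $\hat\alpha$ maps the fiber over $x$ into a single fiber $\text{pr}_{\hat N}^{-1}(\alpha(x))$. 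This defines a map $\alpha:\hat N\to\hat N$ with $\text{pr}_{\hat N}\circ\hat\alpha=\alpha\circ\text{pr}_{\hat N}$ and $\text{pr}_N\circ\alpha=\text{pr}_N$.

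To see that $\alpha$ is smooth, write $\alpha=\text{pr}_{\hat N}\circ\hat\alpha\circ s$, where $s$ is the section $x\mapsto(x,[0])$ of the product $\hat N\times T^{m-k}$. Applying the same construction to $\hat\alpha^{-1}$ gives an inverse of $\alpha$, so $\alpha$ is a diffeomorphism. Hence $\alpha\in\Deck(\hat N/N)$.

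\textbf{Step 2: write $\hat\alpha$ on the slice.} Under the identification $\hat M\cong\hat N\times T^{m-k}$ from Theorem~\ref{the:Topological Splitting Theorem}, set
\[
T'_{\hat\alpha}(x):=p_{T^{m-k}}\bigl(\hat\alpha(x,[0])\bigr).
\]
This is smooth, being a composition of smooth maps. By Step 1, $\hat\alpha(x,[0])=(\alpha(x),T'_{\hat\alpha}(x))$.

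\textbf{Step 3: use equivariance.} Lemma~\ref{lem5.2} says $\hat\alpha$ commutes with the global $T^{m-k}$-action. Provided that action is translation in the second factor, we get
\[
\hat\alpha(x,g)=\hat\alpha\bigl(g\cdot(x,[0])\bigr)=g\cdot\hat\alpha(x,[0])=(\alpha(x),T'_{\hat\alpha}(x)+g).
\]

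\textbf{Main obstacle.} The delicate point is the proviso in Step 3: the trivialization $\hat M\cong\hat N\times T^{m-k}$ must be $T^{m-k}$-equivariant, with the principal action becoming translation in the second factor. This holds because the trivialization in Theorem~\ref{the:Topological Splitting Theorem} is obtained from a global section $\sigma$ of the principal bundle via $(x,g)\mapsto g\cdot\sigma(x)$ (Corollary~\ref{Cor:2.14}), smoothed as in the Remark after Corollary~\ref{Cor:2.14}. I would also check that the principal action on $\hat M$ is the lift of the one on $M$, so that Lemma~\ref{lem5.2} applies to it. That is true because $\hat M$ is the pullback bundle of Proposition~\ref{prop3.5}(1), and the pullback of a principal bundle inherits the action.
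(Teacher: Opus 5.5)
Your proposal is correct and follows essentially the paper's argument. The paper also defines $\alpha$ and $T'_{\hat{\alpha}}$ from the slice $\hat{\alpha}(x,[0])$ and uses Lemma~\ref{lem5.2} to get $\hat{\alpha}(x,g)=(\alpha(x),T'_{\hat{\alpha}}(x)+g)$. It then shows $\alpha\in\Deck(\hat{N}/N)$ using $\hat{\alpha}^{-1}$ and the square $f\circ\text{pr}_M=\text{pr}_N\circ\text{pr}_{\hat{N}}$. Your connectedness argument for fiber preservation is a harmless alternative to getting it from equivariance, and your remark that the trivialization $\hat{M}\cong\hat{N}\times T^{m-k}$ must be $T^{m-k}$-equivariant (coming from a global section) makes explicit an assumption the paper uses without stating it.
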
 
    \begin{proof}
        Let $\hat{\alpha}(x,[0])=(\alpha(x),T'_{\hat{\alpha}}(x))$. Since $\hat{\alpha}$ commutes with the $T^{m-k}$-action by Lemma \ref{lem5.2}, we have:   
        \begin{equation}
        \hat{\alpha}(x,g)=g\circ\hat{\alpha}(x,[0])=(\alpha(x),T'_{\hat{\alpha}}(x)+g).
    \end{equation}
    
    Next, we verify that the first projection $\alpha$ must be a deck transformation on $\hat{N}$.

    Similarly, we can write $\hat{\alpha}^{-1}(x,g)$ as $(\beta(x),T'_{\hat{\alpha}^{-1}}(x)+g)$. The identity $\hat{\alpha}^{-1}\circ\hat{\alpha}(x,g)=(x,g)$ yields $\beta\circ\alpha(x)=x$, hence $\alpha$ is a diffeomorphism.
    
    To see that $\alpha\in \Deck(\hat{N}/N)$, we verify that $\text{pr}_{N}\circ\alpha=\text{pr}_{N}$. A direct calculation gives
    \begin{equation}
    \begin{aligned}
        \text{pr}_{N}(\alpha(x))&=\text{pr}_{N}\circ \text{pr}_{\hat{N}}(\alpha(x),T'_{\hat{\alpha}}(x))\\
        &=f\circ \text{pr}_{M}(\alpha(x),T'_{\hat{\alpha}}(x))\\
        &=f\circ \text{pr}_{M}\circ\hat{\alpha}(x,[0])\\
        &=f\circ \text{pr}_{M}(x,[0])\\
        &=\text{pr}_{N}\circ \text{pr}_{\hat{N}}(x,[0])=\text{pr}_{N}(x),
    \end{aligned}
    \end{equation}
   where $\text{pr}_{\hat{N}}:\hat{M}\to\hat{N}$ is the first component projection. Thus, $\alpha\in \Deck(\hat{N}/N)$.
    \end{proof}
    
	The assignment $\hat{\alpha}\mapsto\alpha$ defines a group homomorphism:
     \begin{equation}
         De:\Deck(\hat{M}/M)\to \Deck(\hat{N}/N).
     \end{equation}
    \begin{lemma}\label{lem5.4}
        The homomorphism $De$ is an isomorphism.
    \end{lemma}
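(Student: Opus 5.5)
We prove that $De$ is a bijective homomorphism, using the two facts already available: the group orders agree, and the covering $\hat{M}\to M$ is the pullback of $M\to N$ along $\text{pr}_{N}$.

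\emph{Homomorphism.} For $\hat{\alpha},\hat{\beta}\in\Deck(\hat{M}/M)$, Lemma~\ref{lem5.3} gives
\[
\hat{\alpha}\circ\hat{\beta}(x,g)=\bigl(\alpha(\beta(x)),\,T'_{\hat{\alpha}}(\beta(x))+T'_{\hat{\beta}}(x)+g\bigr).
\]
The first component is $\alpha\circ\beta$, so $De(\hat{\alpha}\circ\hat{\beta})=De(\hat{\alpha})\circ De(\hat{\beta})$.

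\emph{Injectivity.} Suppose $De(\hat{\alpha})=\id_{\hat{N}}$. Then $\hat{\alpha}(x,g)=(x,T'_{\hat{\alpha}}(x)+g)$, so $\hat{\alpha}$ preserves every torus fiber of $\text{pr}_{\hat{N}}$. By Proposition~\ref{prop3.5}(1), $\hat{M}\cong\text{pr}_{N}^{*}(M)$. Hence $\text{pr}_{M}$ restricted to a fiber $\{x\}\times T^{m-k}$ is a diffeomorphism onto the fiber of $M$ over $\text{pr}_{N}(x)$. A point of the pullback is a pair $(x,p)$ with $\text{pr}_{N}(x)=f(p)$, so $\text{pr}_{M}$ is injective on each fiber.

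Now $\text{pr}_{M}(x,T'_{\hat{\alpha}}(x)+g)=\text{pr}_{M}(x,g)$, and both points lie in the same fiber. Fiberwise injectivity forces $T'_{\hat{\alpha}}(x)=[0]$ for all $x$, so $\hat{\alpha}=\id$.

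\emph{Surjectivity.} By Proposition~\ref{prop3.5}(2), $\Deck(\hat{M}/M)\cong\Deck(\hat{N}/N)\cong\text{Tor}(H_{1}(N))$, and this group is finite. An injective homomorphism between finite groups of the same order is bijective, so $De$ is an isomorphism.

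As a cross-check, or if one prefers not to rely on counting, there is a direct lift. Given $\alpha\in\Deck(\hat{N}/N)$, the pullback description provides $\hat{\alpha}(x,p)=(\alpha(x),p)$. This is a deck transformation of $\hat{M}$ over $M$ with $De(\hat{\alpha})=\alpha$.

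The only delicate point is injectivity. It needs care in identifying the abstract product structure $\hat{M}\cong\hat{N}\times T^{m-k}$ from Theorem~\ref{the:Topological Splitting Theorem} with the pullback structure from Proposition~\ref{prop3.5}(1), so that "the fiber over $x$" means the same thing in both descriptions. Both are $T^{m-k}$-bundles over $\hat{N}$ with the same projection $\text{pr}_{\hat{N}}$. A trivialization of principal bundles preserves fibers, so the fibers agree and the argument goes through.
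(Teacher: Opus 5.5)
Your proof is correct, but it divides the work the opposite way from the paper. The paper proves only surjectivity directly. For $\alpha_0\in\Deck(\hat N/N)$ and $y_0\in\hat N$, the points $\text{pr}_M(y_0,[0])$ and $\text{pr}_M(\alpha_0(y_0),[0])$ lie in the same fiber of $f$, hence in the same $T^{m-k}$-orbit. Normality of $\hat M\to M$ then gives a deck transformation carrying $(y_0,[0])$ into the fiber over $\alpha_0(y_0)$. Injectivity follows from the equality of orders in Proposition~\ref{prop3.5}(2).

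You instead prove injectivity directly, from the fiberwise injectivity of $\text{pr}_M$ given by the pullback description in Proposition~\ref{prop3.5}(1). You then get surjectivity either from the same count or from the explicit lift $(x,p)\mapsto(\alpha(x),p)$.

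The paper's route needs only three ingredients: the commutative square $f\circ\text{pr}_M=\text{pr}_N\circ\text{pr}_{\hat N}$, normality, and the principal orbit structure. It does, however, rely on the index computation behind Proposition~\ref{prop3.5}(2). It also tacitly uses that two deck transformations of the connected space $\hat N$ which agree at one point coincide.

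Your route uses the stronger pullback property. In exchange, your injectivity argument together with your explicit lift gives bijectivity with no counting at all. This makes the lemma independent of Proposition~\ref{prop3.5}(2) and writes $De^{-1}$ down explicitly.

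Your closing concern is the right one to check: the trivialization from Theorem~\ref{the:Topological Splitting Theorem} must be compatible with the pullback fibration. It is, because that trivialization is a principal bundle isomorphism over $\id_{\hat N}$, so it preserves the fibers of $\text{pr}_{\hat N}$.
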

    \begin{proof}
        We show that $De$ is surjective.
 For any $\alpha_{0}\in \Deck(\hat{N}/N)$ and $y_{0}\in \hat{N}$, the principal bundle structure ensures that 
    \begin{equation}
        f\circ \text{pr}_{M}(y_{0},[0])=\text{pr}_{N}(y_{0})=\text{pr}_{N}(\alpha_{0}(y_{0}))=f\circ \text{pr}_{M}(\alpha_{0}(y_{0}),[0])
    \end{equation}   
    Thus, $\text{pr}_{M}(y_{0},[0])$ lies in the same $T^{m-k}$-orbit as $\text{pr}_{M}(\alpha_{0}(y_{0}),[0])$. The normality of the covering space implies that there exists $\hat{\alpha}_{0}\in \Deck(\hat{M}/M)$ such that 
    \begin{equation}
        \hat{\alpha}_{0}(y_{0},[0])=(\alpha_{0}(y_{0}),T'_{\hat{\alpha}_{0}}(y_{0})).
    \end{equation}
    Thus, $De$ is surjective. Together with the isomorphism $\Deck(\hat{N}/N)\cong \Deck(\hat{M}/M)$ in Proposition~\ref{prop3.5}(2), it follows that $De$ is an isomorphism.
    \end{proof}
    
     Denote $T'_{\alpha}:=T'_{De^{-1}(\alpha)}$. We obtain
    \begin{equation}
        \Deck(\hat{M}/M)=\{(\alpha,T'_{\alpha}+\id_{T^{m-k}})|~\alpha\in \Deck(\hat{N}/N)\}.
    \end{equation}

  By Proposition~\ref{prop3.5}(3), the image of $\pi_{1}(\hat{N}\times \mathbb{R}^{m-k})$ under the covering map is the normal subgroup $h^{-1}(K_{1})$ in $\pi_{1}(M)$. So, it follows from Property \ref{property: normal} that $\hat{N}\times \mathbb{R}^{m-k}$ is a normal covering space of $M$.

\begin{lemma}\label{lem6.4}
    There exists a family of smooth maps $T_{\alpha}:\hat{N}\to \mathbb{R}^{m-k}$ indexed by $\alpha\in \Deck(\hat{N}/N)$ such that
    \begin{equation}\label{eqn 5.14}
          \Deck(\hat{N}\times \mathbb{R}^{m-k}/M)=\{(\alpha,T_{\alpha}+\vec{a}+\id_{\mathbb{R}^{m-k}})|~\alpha\in \Deck(\hat{N}/N),\vec{a}\in \mathbb{Z}^{m-k}\},
    \end{equation}
    and $\text{pr}_{T^{m-k}}\circ T_{\alpha}=T_{\alpha}'$.
\end{lemma}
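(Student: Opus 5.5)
The plan is to lift the already-known description of $\Deck(\hat{M}/M)$ through the covering $\id_{\hat{N}}\times\text{pr}_{T^{m-k}}:\hat{N}\times\mathbb{R}^{m-k}\to\hat{N}\times T^{m-k}\cong\hat{M}$. Just before the lemma we showed that $\hat{N}\times\mathbb{R}^{m-k}\to M$ is a normal covering. Hence we have a short exact sequence
\[
0\to \Deck(\hat{N}\times\mathbb{R}^{m-k}/\hat{M})\to \Deck(\hat{N}\times\mathbb{R}^{m-k}/M)\xrightarrow{(\text{pr}_{T^{m-k}})_{*}}\Deck(\hat{M}/M)\to 0 .
\]
The kernel $\Deck(\hat{N}\times\mathbb{R}^{m-k}/\hat{M})$ consists exactly of the translations $(\id_{\hat{N}},\vec{a}+\id_{\mathbb{R}^{m-k}})$ with $\vec{a}\in\mathbb{Z}^{m-k}$. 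It therefore suffices to produce, for each $\alpha\in\Deck(\hat{N}/N)$, a single lift of $(\alpha,T'_\alpha+\id_{T^{m-k}})$ of the form $(\alpha,T_\alpha+\id_{\mathbb{R}^{m-k}})$. Every other lift then differs from it by an element of the kernel, which gives the set in (\ref{eqn 5.14}).

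The existence of a lift follows from the lifting criterion, Proposition \ref{prop: lifting cri}. Apply it to the composite $\hat{\alpha}\circ(\id\times\text{pr}_{T^{m-k}})$, where $\hat{\alpha}=De^{-1}(\alpha)$. The image of $\pi_1(\hat{N}\times\mathbb{R}^{m-k})$ under the covering is $h^{-1}(K_1)$, so the criterion reduces to checking $\hat{\alpha}_*(h^{-1}(K_1))\subseteq h^{-1}(K_1)$ inside $\pi_1(M)$. This holds because $h^{-1}(K_1)$ is normal in $\pi_1(M)$ by Proposition~\ref{prop3.5}(3), and a deck transformation acts on fundamental group images by conjugation.

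Next I determine the shape of an arbitrary lift $\tilde\alpha$. Write $\tilde\alpha(x,r)=(\alpha_1(x,r),\Phi(x,r))$. Projecting to $\hat{M}$ gives $\alpha_1(x,r)=\alpha(x)$ and
\[
\text{pr}_{T^{m-k}}\Phi(x,r)=T'_\alpha(x)+\text{pr}_{T^{m-k}}(r).
\]
Hence $\Phi(x,r)-r$ is a continuous map into $\text{pr}_{T^{m-k}}^{-1}(T'_\alpha(x))$. For fixed $x$ it is independent of $r$, since $\mathbb{R}^{m-k}$ is connected and the fiber is discrete. Setting $T_\alpha(x):=\Phi(x,0)$ gives $\tilde\alpha=(\alpha,T_\alpha+\id_{\mathbb{R}^{m-k}})$. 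By construction $\text{pr}_{T^{m-k}}\circ T_\alpha=T'_\alpha$, and smoothness of $T_\alpha$ comes from smoothness of $\tilde\alpha$, which is a local lift of the smooth map $\hat{\alpha}$.

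Conversely, every element of the right-hand side of (\ref{eqn 5.14}) covers $(\alpha,T'_\alpha+\id_{T^{m-k}})\in\Deck(\hat{M}/M)$ and is a diffeomorphism. It is therefore a deck transformation over $M$, and the two sets coincide.

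The step I expect to be the main obstacle is the lifting verification. One must ensure the basepoint conventions are consistent, so that $\hat{\alpha}_*$ really maps the subgroup $h^{-1}(K_1)$ into itself and not merely into a conjugate. This is where the normality in Proposition~\ref{prop3.5}(3) is essential. The remaining steps are connectedness and discreteness arguments.
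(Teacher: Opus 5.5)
Your argument is correct, and it takes a somewhat different route from the paper's. The paper re-runs the argument of Lemmas~\ref{lem5.2}--\ref{lem5.4} on $\hat{N}\times\mathbb{R}^{m-k}$ itself. It shows that deck transformations over $M$ commute with the $\mathbb{R}^{m-k}$-action, and writes each one as $(\alpha,T''+\vec{a}+\id_{\mathbb{R}^{m-k}})$. It then uses normality of $\hat{N}\times\mathbb{R}^{m-k}\to M$ to show that all integer translates of a given deck transformation are again deck transformations. Finally, it compares orbits in $\hat{M}$ to extract $\text{pr}_{T^{m-k}}\circ T_{\alpha}=T'_{\alpha}$.

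You instead work through the tower $\hat{N}\times\mathbb{R}^{m-k}\to\hat{M}\to M$. The lifting criterion, together with normality of $h^{-1}(K_1)$, lifts each $De^{-1}(\alpha)$. The already-established form from Lemma~\ref{lem5.3}, plus discreteness of the fibres of $\text{pr}_{T^{m-k}}$, forces every lift to be $(\alpha,T_{\alpha}+\id_{\mathbb{R}^{m-k}})$. The $\vec{a}$-ambiguity is then exactly the kernel $\Deck(\hat{N}\times\mathbb{R}^{m-k}/\hat{M})\cong\mathbb{Z}^{m-k}$.

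What your route buys is that it makes explicit two points the paper's ``analogous to Lemmas~\ref{lem5.2}--\ref{lem5.4}'' passes over:
\begin{enumerate}
\item The map to $\Deck(\hat{N}/N)$ is no longer injective. For fixed $\alpha$, the $\mathbb{R}^{m-k}$-components of different deck transformations differ only by constants in $\mathbb{Z}^{m-k}$, so $T_{\alpha}$ genuinely depends only on $\alpha$.
\item Every $\alpha$ actually occurs. This surjectivity is presupposed in the paper's reverse-inclusion step.
\end{enumerate}
In addition, $\text{pr}_{T^{m-k}}\circ T_{\alpha}=T'_{\alpha}$ holds by construction in your argument. You also never need the analogue of Lemma~\ref{lem5.2} for the infinite discrete group $\Deck(\hat{N}\times\mathbb{R}^{m-k}/M)$.

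The paper's route has the merit of staying within the same ``deck transformations commute with the torus action'' framework used for $\hat{M}$, without invoking the lifting criterion. Both arguments rest in the same way on the normality of $\hat{N}\times\mathbb{R}^{m-k}\to M$ asserted just before the lemma.
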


 \begin{proof}
     Using the induced $\mathbb{R}^{m-k}$-action on the normal covering space $\hat{N}\times \mathbb{R}^{m-k}$ and an argument analogous to those in Lemmas \ref{lem5.2}-\ref{lem5.4}, we can decompose any deck transformation $\hat{\alpha}'\in \Deck(\hat{N}\times\mathbb{R}^{m-k}/M)$ as $(\alpha,T''+\vec{a}+\id_{\mathbb{R}^{m-k}})$, where $\alpha \in \Deck(\hat{N}/N)$, $\vec{a} \in \mathbb{Z}^{m-k}$, and a map $T'':\hat{N}\to\mathbb{R}^{m-k}$. Denoting the map $T''$ by $T_{\alpha}$, we obtain the inclusion
     \begin{equation}
          \Deck(\hat{N}\times \mathbb{R}^{m-k}/M)\subseteq\{(\alpha,T_{\alpha}+\vec{a}+\id_{\mathbb{R}^{m-k}})|~\alpha\in \Deck(\hat{N}/N), T_{\alpha}:\hat{N}\to \mathbb{R}^{m-k},\vec{a}\in \mathbb{Z}^{m-k}\}.
     \end{equation}
     
     We next verify that
     \begin{equation}\label{eqn 5.16}
                \Deck(\hat{N}\times\mathbb{R}^{m-k}/M)\supseteq\{(\alpha,T_{\alpha}+\vec{a}+\id_{\mathbb{R}^{m-k}})|~\alpha\in \Deck(\hat{N}/N), T_{\alpha}:\hat{N}\to \mathbb{R}^{m-k},\vec{a}\in \mathbb{Z}^{m-k}\}.
             \end{equation} 
             
    For each $(y_{0},r_{0})\in \hat{N}\times\mathbb{R}^{m-k}$ and $(\alpha,T_{\alpha}+\vec{a}+\id_{\mathbb{R}^{m-k}})$ with $(\alpha,T_{\alpha}+\vec{a'}+\id_{\mathbb{R}^{m-k}})\in \Deck(\hat{N}\times \mathbb{R}^{m-k}/M)$ for some $\vec{a'}\in \mathbb{Z}^{m-k}$:
           \begin{equation}
           \label{eqn 5.17}
           \begin{aligned}
               &\quad \text{pr}_{M}\circ(\id_{\hat{N}}\times \text{pr}_{T^{m-k}})((\alpha,T_{\alpha}+\vec{a}+\id_{\mathbb{R}^{m-k}})\cdot
               (y_{0},r_{0}))\\
               &=\text{pr}_{M}(\alpha(y_{0}),\text{pr}_{T^{m-k}}(T_{\alpha}(y_{0})+\vec{a}+r_{0}))
               =\text{pr}_{M}(\alpha(y_{0}),\text{pr}_{T^{m-k}}(T_{\alpha}(y_{0})+\vec{a'}+r_{0}))\\
               &=\text{pr}_{M}\circ(\id_{\hat{N}}\times \text{pr}_{T^{m-k}})((\alpha,T_{\alpha}+\vec{a'}+\id_{\mathbb{R}^{m-k}})\cdot
               (y_{0},r_{0}))\\
               &=\text{pr}_{M}\circ(\id_{\hat{N}}\times \text{pr}_{T^{m-k}})(y_{0},r_{0}).
           \end{aligned}  
           \end{equation}

       By the normality of the covering space, the points $(\alpha,T_{\alpha}+\vec{a}+\id_{\mathbb{R}^{m-k}})\cdot (y_{0},r_{0})$ and $(y_{0},r_{0})$ lie in the same $\Deck(\hat{N}\times \mathbb{R}^{m-k}/M)$-orbit, hence (\ref{eqn 5.16}) holds.

       Equality (\ref{eqn 5.17}) implies that the images of the two points under $(\id_{\hat{N}}\times \text{pr}_{T^{m-k}})$ belong to the same $\Deck(\hat{M}/M)$-orbit. Consequently, there exists $\alpha_{0}\in \Deck(\hat{M}/M)$ such that 
               \begin{equation}
                   (\id_{\hat{N}}\times \text{pr}_{T^{m-k}})((\alpha,T_{\alpha}+\vec{a}+\id_{\mathbb{R}^{m-k}})\cdot
               (y_{0},r_{0}))=\alpha_{0}\circ(\id_{\hat{N}}\times \text{pr}_{T^{m-k}})(y_{0},r_{0}).
               \end{equation}
       Comparing the first components gives $De(\alpha_{0})=\alpha$; comparing the second components yields $\text{pr}_{T^{m-k}}\circ T_{\alpha}=T_{\alpha}'$.
 \end{proof}

 It is straightforward to verify that $\{T_{\alpha}\}_{\alpha\in \Deck(\hat{N}/N)}$ satisfies the cocycle condition (\ref{group conditions}). However, to get the commutativity condition, we need the following lemma:
  \begin{lemma}\label{lem5.6} The deck transformation group $\Deck(\hat{N}\times \mathbb{R}^{m-k}/M)$ is abelian.
  \end{lemma}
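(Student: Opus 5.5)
We identify $\Deck(\hat{N}\times\mathbb{R}^{m-k}/M)$ with a quotient of $\pi_{1}(M)$ and show that this quotient is abelian. By Proposition~\ref{prop3.5}(3) and Property~\ref{property: normal}, the covering $\hat{N}\times\mathbb{R}^{m-k}\to M$ is normal. The image of its fundamental group in $\pi_{1}(M)$ is the subgroup $\text{Pr}(\pi_{1}(\hat{N}\times\mathbb{R}^{m-k}))=h^{-1}(K_{1})$. Hence
\begin{equation*}
\Deck(\hat{N}\times\mathbb{R}^{m-k}/M)\cong \frac{\pi_{1}(M)}{h^{-1}(K_{1})}.
\end{equation*}
It therefore suffices to prove that $[\pi_{1}(M),\pi_{1}(M)]\subseteq h^{-1}(K_{1})$. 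A quotient of a group by a normal subgroup is abelian exactly when that subgroup contains the commutator subgroup.

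The required inclusion is Proposition~\ref{prop3.5}(3): since $[\pi_{1}(M),\pi_{1}(M)]=h^{-1}(0)$ and $0\in K_{1}$, we get $[\pi_{1}(M),\pi_{1}(M)]\subseteq h^{-1}(K_{1})$. Equivalently, $\pi_{1}(M)/h^{-1}(K_{1})\cong H_{1}(M)/K_{1}$. Here $h$ is the abelianization map, it is surjective, and its kernel is contained in $h^{-1}(K_{1})$. The group $H_{1}(M)/K_{1}$ is a quotient of an abelian group, so it is abelian.

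The main point to check is the identification of the covering group of $\hat{N}\times\mathbb{R}^{m-k}\to M$ with $h^{-1}(K_{1})$. The composite covering $\hat{N}\times\mathbb{R}^{m-k}\to\hat{M}\to M$ has image $(\text{pr}_{M})_{*}$ applied to the image of $\pi_{1}(\hat{N}\times\mathbb{R}^{m-k})\cong\pi_{1}(\hat{N})$ in $\pi_{1}(\hat{M})\cong\pi_{1}(\hat{N})\times\pi_{1}(T^{m-k})$, which is the $\pi_{1}(\hat{N})$-factor.

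One must confirm that under the identification $\hat{M}\cong\hat{N}\times T^{m-k}$ this factor corresponds to $h^{-1}(K_{1})$, rather than to another complement of $\im(l_{*})$ in $K$. If the trivialization from Theorem~\ref{the:Topological Splitting Theorem} can be chosen compatibly, we are done. This is the step I expect to be the main obstacle.

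If the trivialization cannot be chosen compatibly, I would argue directly instead. Any complement $C$ of $\im(l_{*})$ in $K=\im(l_{*})\times h^{-1}(K_{1})$ is normal in $\pi_{1}(M)$, since the covering is normal, and it contains $[K,K]=[h^{-1}(K_{1}),h^{-1}(K_{1})]$. I would then use that $\im(l_{*})$ is central and meets $[\pi_{1}(M),\pi_{1}(M)]$ trivially (Theorem~\ref{Thm3.1}(2),(3)). With this, I would show that every commutator $[a,b]\in[\pi_{1}(M),\pi_{1}(M)]\subseteq K$ has trivial $\im(l_{*})$-component in the decomposition $K=\im(l_{*})\times C$. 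The aim is to adjust $C$ by a homomorphism $h^{-1}(K_{1})\to\im(l_{*})$, which is always realizable by a fiberwise torus twist of the trivialization, so that $C\supseteq[\pi_{1}(M),\pi_{1}(M)]$. Abelianness then follows as above.
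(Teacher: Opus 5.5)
Your main line is the paper's proof: identify $\Deck(\hat N\times\mathbb{R}^{m-k}/M)$ with $\pi_1(M)/h^{-1}(K_1)$ and apply Proposition~\ref{prop3.5}(3), i.e.\ $[\pi_1(M),\pi_1(M)]\subseteq h^{-1}(K_1)$. The paper takes for granted that the image of $\pi_1(\hat N\times\mathbb{R}^{m-k})$ is $h^{-1}(K_1)$, so your worry is legitimate, and the fiberwise twist $(x,g)\mapsto(x,g+\phi(x))$ you mention at the end settles it, because every homomorphism $\pi_1(\hat N)\to\mathbb{Z}^{m-k}$ is induced by some smooth $\phi:\hat N\to T^{m-k}$.

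Do not rely on the intermediate claims in your fallback. A general complement $\{c\,\psi(c)\}$ of $\im(l_*)$ in $K$, with $\psi:h^{-1}(K_1)\to\im(l_*)$ a homomorphism, is normal in $\pi_1(M)$ only if $\psi$ kills $[\pi_1(M),h^{-1}(K_1)]$, and it contains $[\pi_1(M),\pi_1(M)]$ only if $\psi$ kills that subgroup. So for an arbitrary trivialization you can claim neither normality nor abelianness; the twist has to be chosen first.
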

  \begin{proof}
  
      By Property \ref{property: normal}, we have the commutative diagram:
      
      \begin{equation}
          \xymatrix{
          0\ar[dr]&&0\ar[d]&0\ar[d]&\\
          &\pi_{1}(\hat{M})\cong\pi_{1}(\hat{N})\times\pi_{1}(T^{m-k})\ar[dr]^{\text{Pr}_{M}}&\pi_{1}(T^{m-k})\ar[d]^{l_{*}}\ar@{=}[r]&\Deck(\hat{N}\times \mathbb{R}^{m-k}/\hat{M})\ar[d]^{\text{Pr}_{\hat{M}/M}}&\\
          0\ar[r]&\pi_{1}(\hat{N}\times\mathbb{R}^{m-k})\ar[r]_-{\text{Pr}_{\hat{N}\times\mathbb{R}^{m-k}/M}}\ar@{=}[d]&\pi_{1}(M)\ar[dr]^{\text{Pr}_{M}^{*}}\ar[r]^-{\text{Pr}_{\hat{N}\times\mathbb{R}^{m-k}/M}^{*}}\ar[d]^{f_{*}}&\Deck(\hat{N}\times \mathbb{R}^{m-k}/M)\ar[d]^{(\text{Pr}_{T^{m-k}})_{*}}\ar[r]&0\\
          0\ar[r]&\pi_{1}(\hat{N})\ar[r]_-{\text{Pr}_{N}}&\pi_{1}(N)\ar[r]_-{\text{Pr}_{N}^{*}}\ar[d]&\Deck(\hat{M}/M)\cong \Deck(\hat{N}/N)\ar[dr]\ar[r]\ar[d]&0\\
          &&0&0&0.
          }
      \end{equation}

        We observe that
        \begin{equation}          
            \text{Pr}_{M}=\text{Pr}_{\hat{N}\times \mathbb{R}^{m-k}/M}\circ \text{Pr}_{\hat{N}}+l_{*}\circ \text{Pr}_{T^{m-k}}.      
        \end{equation}
        By Proposition~\ref{prop3.5}(3), $[\pi_{1}(M),\pi_{1}(M)]\subseteq h^{-1}(K_{1})=\im(\text{Pr}_{\hat{N}\times\mathbb{R}^{m-k}/M})$. It follows that
        \begin{equation}
        \begin{aligned}
              \quad[\Deck(\hat{N}\times \mathbb{R}^{m-k}/M),\Deck(\hat{N}\times \mathbb{R}^{m-k}/M)]
              &=\text{Pr}^{*}_{\hat{N}\times\mathbb{R}^{m-k}/M}([\pi_{1}(M),\pi_{1}(M)])\\
              &\subseteq \text{Pr}^{*}_{\hat{N}\times\mathbb{R}^{m-k}/M}(\im(\text{Pr}_{\hat{N}\times\mathbb{R}^{m-k}/M}))=\{e\}.
              \end{aligned}    
              \end{equation}
              Hence, $\Deck(\hat{N}\times \mathbb{R}^{m-k}/M)$ is abelian.
        \end{proof}
  
    By Lemma \ref{lem5.6}, the transformations commute:
     \begin{equation}
         (\alpha,T_{\alpha}+\vec{a}+\id_{\mathbb{R}^{m-k}})\cdot (\beta,T_{\beta}+\vec{b}+\id_{\mathbb{R}^{m-k}})=(\beta,T_{\beta}+\vec{b}+\id_{\mathbb{R}^{m-k}})\cdot (\alpha,T_{\alpha}+\vec{a}+\id_{\mathbb{R}^{m-k}})
     \end{equation}
        This implies the commutativity condition:
         \begin{equation}
              T_{\alpha}(\beta(x))-T_{\alpha}(x)=T_{\beta}(\alpha(x))-T_{\beta}(x),~~\forall \alpha,\beta\in\Deck(\hat{N}/N),x\in \hat{N}.
         \end{equation}
         \\

   Having described the structure of deck transformation groups for bundles arising from Theorem~\ref{the: Structure Theorem}, we now prove the converse: given the data $(N,\hat{N},\{T_{\alpha}\}_{\alpha\in \Deck(\hat{N}/N)})$ satisfying the stated hypotheses, the resulting manifold $M$ is a principal torus bundle satisfying the Betti number equality.

    \begin{theorem}\label{Thm 5.7}Let $N^{k}$ be a closed manifold and $\hat{N}$ a finite normal covering space of $N$ with an abelian deck transformation group $\Deck(\hat{N}/N)$. Suppose we are given a family of smooth maps $T_{\alpha}:\hat{N}\to \mathbb{R}^{m-k}$ indexed by $\alpha\in \Deck(\hat{N}/N)$
    satisfying for all $ \alpha,\beta\in \Deck(\hat{N}/N)$ and $x\in \hat{N}$:
           \begin{equation}\label{eqn 5.24}    
               \text{pr}_{T^{m-k}}\circ T_{\alpha\cdot\beta}(x)=\text{pr}_{T^{m-k}}\circ T_{\alpha}(\beta(x))+\text{pr}_{T^{m-k}}\circ T_{\beta}(x)~~~(\text{cocycle condition}).	 
    	 	\end{equation} 
       Then, the quotient $m$-dimensional manifold 
       \begin{equation}
           M:=(\hat{N}\times T^{m-k})/\{(\alpha,\text{pr}_{T^{m-k}}\circ T_{\alpha}+\id_{T^{m-k}})|~\alpha\in \Deck(\hat{N}/N)\}
       \end{equation} satisfies the following:
         \begin{enumerate}[(1)]
             \item $M$ is a principal $T^{m-k}$-bundle over $N$ with a smooth $T^{m-k}$-action;
             \item The family of smooth maps $\{T_{\alpha}\}_{\alpha\in \Deck(\hat{N}/N)}$ satisfies the following commutativity condition:
             \begin{equation}
                   T_{\alpha}(\beta(x))-T_{\alpha}(x)=T_{\beta}(\alpha(x))-T_{\beta}(x)~~~(\text{commutativity condition}) 
             \end{equation} for all $ \alpha,\beta\in \Deck(\hat{N}/N)$ and $x\in \hat{N}$;
             \item The covering space $\hat{N}\times\mathbb{R}^{m-k}$ of $M$ is normal, and the group $\Deck(\hat{N}\times\mathbb{R}^{m-k}/M)$ is abelian;
             \item The homotopy and homology short exact sequences (\ref{se: short exact sequence (2)}) and (\ref{se: short exact sequence (3)}) hold;
             \item $M$ and $N$ satisfy the Betti number condition (\ref{eq:difference}).
         \end{enumerate}         
    \end{theorem}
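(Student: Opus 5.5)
The plan is to work on the intermediate cover $\hat{N}\times\mathbb{R}^{m-k}$ throughout. For each $\alpha\in\Deck(\hat{N}/N)$ and $\vec{a}\in\mathbb{Z}^{m-k}$ set
$$\tilde\alpha_{\vec a}:=(\alpha,\,T_\alpha+\vec a+\id_{\mathbb{R}^{m-k}}),$$
and let $\Gamma$ be the group they generate. The first check is that $\Gamma$ is a group. The cocycle condition (\ref{eqn 5.24}) says that $T_{\alpha\beta}-T_\alpha\circ\beta-T_\beta$ is a continuous map $\hat{N}\to\mathbb{Z}^{m-k}$. Since $\hat{N}$ is connected, it is a constant $c(\alpha,\beta)\in\mathbb{Z}^{m-k}$, so $\tilde\alpha_{\vec a}\circ\tilde\beta_{\vec b}=(\alpha\beta)_{\vec a+\vec b-c(\alpha,\beta)}$. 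Taking $\alpha=\beta=\id$ gives $T_{\id}\equiv$ const $\in\mathbb{Z}^{m-k}$, and we may normalize $T_{\id}=0$. Then $\Gamma$ is an extension of $\Deck(\hat{N}/N)$ by $\mathbb{Z}^{m-k}$. It acts freely and properly discontinuously, because it covers the free action of $\Deck(\hat{N}/N)$ on $\hat{N}$ and the $\mathbb{Z}^{m-k}$ part acts by translations. Moreover $\Gamma/\mathbb{Z}^{m-k}$ is exactly the group used to define $M$. Hence $M=(\hat{N}\times\mathbb{R}^{m-k})/\Gamma$ is a smooth closed manifold, and by Property~\ref{property: normal}(2) the cover $\hat{N}\times\mathbb{R}^{m-k}\to M$ is normal with deck group $\Gamma$.

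\textbf{Part (1).} Every generator of $\Gamma$ acts on the $\mathbb{R}^{m-k}$ factor by a translation that depends only on the $\hat{N}$ coordinate. Therefore the $T^{m-k}$-action $g\cdot(x,t)=(x,t+g)$ on $\hat{N}\times T^{m-k}$ commutes with every $(\alpha,\text{pr}_{T^{m-k}}\circ T_\alpha+\id)$ and descends to a smooth free action on $M$. The projection $M\to\hat{N}/\Deck(\hat{N}/N)=N$ is a smooth map whose fibers are exactly the orbits. Local triviality follows by choosing evenly covered neighbourhoods $U\subset N$ and a lift $\tilde U\subset\hat{N}$, which gives $f^{-1}(U)\cong\tilde U\times T^{m-k}$ equivariantly. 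This proves (1).

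\textbf{Part (2)}, which I expect to be the main obstacle. Because $\Deck(\hat{N}/N)$ is abelian, comparing the cocycle identity for $\alpha\beta$ and for $\beta\alpha$ gives
$$d(\alpha,\beta):=\bigl(T_\alpha\circ\beta-T_\alpha\bigr)-\bigl(T_\beta\circ\alpha-T_\beta\bigr)=c(\beta,\alpha)-c(\alpha,\beta)\in\mathbb{Z}^{m-k},$$
a constant. Equivalently, $d(\alpha,\beta)$ is the translation part of the commutator $[\tilde\alpha_0,\tilde\beta_0]\in\mathbb{Z}^{m-k}\subset\Gamma$. I would show $d$ is additive in each variable. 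The key observation is that $\mathbb{Z}^{m-k}$ is central in $\Gamma$: translations by $\vec a$ commute with every $\tilde\alpha_{\vec b}$. For a central extension, the commutator pairing $\Gamma/Z\times\Gamma/Z\to Z$ is bimultiplicative, and one can check this directly via $[xy,z]=x[y,z]x^{-1}[x,z]$. Then for $n=\operatorname{ord}(\alpha)$ in the finite group $\Deck(\hat{N}/N)$ we get $n\,d(\alpha,\beta)=d(\alpha^n,\beta)=d(\id,\beta)=0$. Since $\mathbb{Z}^{m-k}$ is torsion-free, $d(\alpha,\beta)=0$, which is the commutativity condition.

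\textbf{Parts (3)--(5).} Part (2) says all generators of $\Gamma$ commute, so $\Gamma$ is abelian; combined with the normality shown in the first step, this gives (3). For (4), apply Theorem~\ref{Thm2.2} to $M\to N$ to get exactness of
$$\pi_1(T^{m-k})\to\pi_1(M)\to\pi_1(N)\to 0.$$
Injectivity of $l_*$ holds because the fiber loops map to $\mathbb{Z}^{m-k}\subset\Gamma=\pi_1(M)/\pi_1(\hat{N})$, where they survive. Rationally, $\Gamma\otimes\mathbb{Q}\cong\mathbb{Q}^{m-k}$ because $\Gamma$ is abelian and $\mathbb{Z}^{m-k}$ has finite index in it. Hence the composite
$$\pi_1(T^{m-k})\to\pi_1(M)\to H_1(M)\to\Gamma$$
has rank $m-k$, so $\bar l_*$ is injective modulo torsion. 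From the right-exact homology sequence
$$H_1(T^{m-k})\to H_1(M)\to H_1(N)\to 0$$
we get $b_1(M)\ge b_1(N)+(m-k)$. The reverse inequality is immediate from the same sequence, since $\operatorname{rank} H_1(T^{m-k})=m-k$. This gives (5). Once (\ref{eq:difference}) holds, Theorem~\ref{Thm3.1} and the remark following it give the short exact sequences (\ref{se: short exact sequence (2)}) and (\ref{se: short exact sequence (3)}), completing (4).
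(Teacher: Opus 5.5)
Your proof is correct and follows the paper's overall outline, but part (2) and the setup for (3) take a different route.

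\textbf{Part (2).} The paper shows that $\vec{a}_{\alpha,\beta}=T_{\beta}(\alpha(x))+T_{\alpha}(x)-T_{\alpha}(\beta(x))-T_{\beta}(x)$ is a constant in $\mathbb{Z}^{m-k}$. It then kills this constant with a double telescoping sum: first over an $\alpha$-orbit, which cancels the $T_\beta$ terms, and then over a $\beta$-orbit, which cancels the $T_\alpha$ terms. This works purely with the functions $T_\alpha$ and uses the finite order of both $\alpha$ and $\beta$.

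You instead read the constant as the commutator $[\tilde\alpha_0,\tilde\beta_0]$ in the central extension $0\to\mathbb{Z}^{m-k}\to\Gamma\to\Deck(\hat N/N)\to 0$, and use bimultiplicativity of the commutator pairing. Bimultiplicativity is the paper's first telescoping in disguise. Evaluating at $\alpha^n=\id$, where the cocycle relation forces $T_{\id}$ to be constant, replaces the second telescoping. So you need only the order of $\alpha$. Your argument also exposes the general fact behind the statement: a central extension of a finite abelian group by a torsion-free abelian group is abelian.

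\textbf{Part (3).} Constructing $\Gamma$ explicitly makes this part self-contained. Property~\ref{property: normal}(2) gives normality of $\hat N\times\mathbb{R}^{m-k}\to M$ and $\Deck(\hat N\times\mathbb{R}^{m-k}/M)=\Gamma$ at once. The paper instead proves normality through the lifting criterion and then identifies the deck group by invoking Lemma~\ref{lem6.4}, which was established for the forward direction.

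\textbf{Parts (4) and (5).} Both proofs rest on the same fact: $\pi_1(M)\to\Gamma$ factors through $H_1(M)$ and is injective on the fiber subgroup. The paper phrases this as $l_*(\pi_1(T^{m-k}))\cap[\pi_1(M),\pi_1(M)]=\{e\}$ and reads off exactness before the Betti numbers. You compute $b_1$ first and then appeal to Theorem~\ref{Thm3.1}. That detour is unnecessary. Since $H_1(T^{m-k})$ is torsion-free and its composite into $\Gamma$ is injective, $\bar l_*$ is already injective. Together with right exactness, this gives the short exact homology sequence directly.
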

      \begin{proof}
          \begin{enumerate}[(1)]
              \item For any $\alpha\in \Deck(\hat{N}/N)$, $y\in\hat{N}$ and $g,g'\in T^{m-k}$, we compute
              \begin{equation}
              \begin{aligned}
                  (\alpha,\text{pr}_{T^{m-k}}\circ T_{\alpha}+\id_{T^{m-k}})\cdot g\cdot (y,g')&=(\alpha,\text{pr}_{T^{m-k}}\circ T_{\alpha}+\id_{T^{m-k}})\cdot (y,g+g')\\
                  &=(\alpha(y),\text{pr}_{T^{m-k}}\circ T_{\alpha}(y)+g+g')\\
                  &=g\cdot(\alpha(y),\text{pr}_{T^{m-k}}\circ T_{\alpha}(y)+g')\\
                  &=g\cdot(\alpha,\text{pr}_{T^{m-k}}\circ T_{\alpha}+\id_{T^{m-k}})\cdot (y,g'),
              \end{aligned} 
              \end{equation} which shows that the torus action commutes with $\Deck(\hat{N}\times T^{m-k}/M)$.
              
            We define a smooth fibration $f:M\to N$ by
            $f(\text{pr}_{M}(y,g')):=\text{pr}_{N}(y)$ for $y\in\hat{N}$ and $g'\in T^{m-k}$. This map is well-defined. Moreover, the commutativity established above guarantees that $M$ is a principal $T^{m-k}$-bundle over $N$;
            \bigskip
            \item For all $\alpha,\beta\in \Deck(\hat{N}/N)$, the commutativity of $\Deck(\hat{N}/N)$ and the cocycle condition imply:
            \begin{equation}
                \text{pr}_{T^{m-k}}\circ T_{\beta}(\alpha(x))+\text{pr}_{T^{m-k}}\circ T_{\alpha}(x)=\text{pr}_{T^{m-k}}\circ T_{\alpha}(\beta(x))+\text{pr}_{T^{m-k}}\circ T_{\beta}(x).
            \end{equation}
            Hence, the function $T_{\beta}(\alpha(x))+T_{\alpha}(x)-T_{\alpha}(\beta(x))-T_{\beta}(x)$ takes values in $\mathbb{Z}^{m-k}$. By the continuity of $\{T_{\alpha}\}$, there exists a constant $\vec{a}_{\alpha,\beta}\in\mathbb{Z}^{m-k}$ such that
            \begin{equation}\label{eqn 5.29}
               T_{\beta}(\alpha(x))+T_{\alpha}(x)-T_{\alpha}(\beta(x))-T_{\beta}(x)=\vec{a}_{\alpha,\beta}
            \end{equation} for any $x\in \hat{N}$.

          Suppose that $\alpha^{L_{1}}=\beta^{L_{2}}=\id_{\hat{N}}$. Fix a basepoint $x_{0}\in\hat{N}$. For any $x'\in\hat{N}$, consider the points $x',\alpha(x'),\dots ,\alpha^{L_{1}-1}(x')$. Substituting these into equation (\ref{eqn 5.29}) and summing over $j=0$ to $L_{1}-1$ yields
          \begin{equation}\label{eqn 5.30}
              \displaystyle\sum_{j=0}^{L_{1}-1}[T_{\alpha}(\alpha^{j}(x'))-T_{\alpha}(\beta\cdot\alpha^{j}(x'))]=L_{1}\vec{a}_{\alpha,\beta},~~\forall x'\in\hat{N}.
          \end{equation}
          Now take $x'=x_{0},\beta(x_{0}),\dots ,\beta^{L_{2}-1}(x_{0})$. Substituting each of these into equation $(\ref{eqn 5.30})$ and summing over $l=0$ to $L_{2}-1$ gives
          \begin{equation} 0=\displaystyle\sum_{l=0}^{L_{2}-1}\displaystyle\sum_{j=0}^{L_{1}-1}[T_{\alpha}(\beta^{l}\cdot\alpha^{j}(x_{0}))-T_{\alpha}(\beta^{l+1}\cdot\alpha^{j}(x_{0}))]=L_{2}L_{1}\vec{a}_{\alpha,\beta}.
          \end{equation} 
         Therefore, $\vec{a}_{\alpha,\beta}=0$, and the commutativity condition follows; 
         \bigskip
            \item Let $\text{pr}_{M}:\hat{N}\times T^{m-k}\to M$ be
            the covering (quotient) map. By Property~\ref{property: normal}(2)(a), $\text{pr}_{M}$ is a normal map. We first verify that $\hat{N}\times \mathbb{R}^{m-k}$ is a normal covering space of $M$.

            For any $x\in M$ and any $(y_{1},g_{1}),(y_{2},g_{2})\in (\text{pr}_{M}\circ(\id_{\hat{N}}\times \text{pr}_{T^{m-k}}))^{-1}(x)$, the normality of the covering space $\hat{N}\times T^{m-k}$ implies the existence of an element $(\alpha,\text{pr}_{T^{m-k}}\circ T_{\alpha}+\id_{T^{m-k}})$ such that $(\alpha,\text{pr}_{T^{m-k}}\circ T_{\alpha}+\id_{T^{m-k}})(y_{1},\text{pr}_{T^{m-k}}(g_{1}))=(y_{2},\text{pr}_{T^{m-k}}(g_{2}))$. By Proposition \ref{prop: lifting cri}, there exists a lift $\alpha'':\hat{N}\times \mathbb{R}^{m-k}\to \hat{N}\times \mathbb{R}^{m-k}$ of $(\alpha,\text{pr}_{T^{m-k}}\circ T_{\alpha}+\id_{T^{m-k}})$ satisfying:
            \begin{equation}
            \begin{aligned}
            \alpha''(y_{1},g_{1})&=(y_{2},g_{2}),\\
                (\id_{\hat{N}}\times \text{pr}_{T^{m-k}})\circ \alpha''&=(\alpha,\text{pr}_{T^{m-k}}\circ T_{\alpha}+\id_{T^{m-k}})\circ(\id_{\hat{N}}\times \text{pr}_{T^{m-k}}).
            \end{aligned}  
            \end{equation}
             Thus, $\text{pr}_{M}\circ(\id_{\hat{N}}\times \text{pr}_{T^{m-k}})\circ\alpha''=\text{pr}_{M}\circ(\id_{\hat{N}}\times \text{pr}_{T^{m-k}})$ and  $\alpha''\in \Deck(\hat{N}\times\mathbb{R}^{m-k}/M)$, confirming that $\hat{N}\times \mathbb{R}^{m-k}$ is a normal covering of $M$. 
             
             We next show that:
             \begin{equation}\label{eqn 5.33}
                 \Deck(\hat{N}\times\mathbb{R}^{m-k}/M)=\{(\alpha,T_{\alpha}+\vec{a}+\id_{\mathbb{R}^{m-k}})|~\alpha\in \Deck(\hat{N}/N),\vec{a}\in \mathbb{Z}^{m-k}\}.
             \end{equation}
            Lemma \ref{lem6.4} provides a family of smooth maps $TT_{\alpha}:\hat{N}\to \mathbb{R}^{m-k}$ indexed by $\alpha\in \Deck(\hat{N}/N)$ satisfying (\ref{eqn 5.14}) with $\text{pr}_{T^{m-k}}\circ TT_{\alpha}=\text{pr}_{T^{m-k}}\circ T_{\alpha}$. Thus, $TT_{\alpha}-T_{\alpha}$ is constant in $\mathbb{Z}^{m-k}$; after replacing $TT_{\alpha}$ by $T_{\alpha}$, we obtain the desired form (\ref{eqn 5.33}).
          
            Finally, the commutativity condition forces $\Deck(\hat{N} \times \mathbb{R}^{m-k}/ M)$ to be abelian;
          \bigskip
         \item Fix a base point $y_{0}\in \hat{N}$. Define a fiber embedding map $l:T^{m-k}\to M$ by $l(g):=\text{pr}_{M}(y_{0},g)$ for any $g\in T^{m-k}$. Let  $l_{y_{0}}:T^{m-k}\to \hat{N}\times T^{m-k}$ be given by $l_{y_{0}}(g):=(y_{0},g)$. Then, $l=\text{pr}_{M}\circ l_{y_{0}}$, $l_{*}$ is an injective homomorphism:
            \begin{equation}
                l_{*}:\pi_{1}(T^{m-k})\xhookrightarrow{(l_{y_{0}})_{*}}\pi_{1}(\hat{N}\times T^{m-k})\xhookrightarrow{\text{Pr}_{M}}\pi_{1}(M).
            \end{equation}
         Combining  this with Theorem \ref{Thm2.2}, we get $\im(\partial_{1})=\Ker(l_{*})=\{e\}$, yielding the short exact sequence (\ref{se: short exact sequence (2)}) of fundamental groups.
         
         Consider the exact sequence of covering spaces:
       \begin{equation}
           0\xrightarrow{} \pi_{1}(\hat{N}\times\mathbb{R}^{m-k}) \xrightarrow{\text{Pr}_{\hat{N}\times\mathbb{R}^{m-k}/M}}\pi_{1}(M) \xrightarrow{\text{Pr}_{\hat{N}\times\mathbb{R}^{m-k}/M}^{*}}
           \Deck(\hat{N}\times\mathbb{R}^{m-k}/M) \xrightarrow{} 0~.
       \end{equation}
        By part (3), for any $a,b\in \pi_{1}(M)$,
           \begin{equation}
                \text{Pr}_{\hat{N}\times\mathbb{R}^{m-k}/M}^{*}[a,b]=[\text{Pr}_{\hat{N}\times\mathbb{R}^{m-k}/M}^{*}(a),\text{Pr}_{\hat{N}\times\mathbb{R}^{m-k}/M}^{*}(b)]\\
                =\id.  
           \end{equation}
             Hence, $[\pi_{1}(M),\pi_{1}(M)]$ lies in the image of $\pi_{1}(\hat{N})$, and therefore 
             $l_{*}(\pi_{1}(T^{m-k}))\cap [\pi_{1}(M),\pi_{1}(M)]=\{e\}.$
             Together with diagram (\ref{big diagram}), this yields the short exact sequence (\ref{se: short exact sequence (3)}) in homology;
          \bigskip   
      \item From part (4), we deduce:
              \begin{equation}
                  \text{b}_{1}(M)-\text{b}_{1}(N)=\text{b}_{1}(T^{m-k})=m-k~.
              \end{equation}
          \end{enumerate}      
      \end{proof}

     Finally, we give an example in which the data $\{T_{\alpha}\}_{\alpha\in \Deck(\hat{N}/N)}$ contain a nonconstant map.
     \begin{example}
         Let $M:=S^{3}\times S^{1}/ \left \langle \alpha' \right \rangle$ and $N:= S^{3}/\left \langle \alpha \right \rangle$, 
     where \begin{equation}
         \alpha':S^{3}\times S^{1}\to S^{3}\times S^{1},\alpha'(x,y,e^{i\theta})=(-x,-y,e^{i(\theta+T_{\alpha}(x,y))}),
     \end{equation}
   \begin{equation}
         \alpha:S^{3}\to S^{3}, \alpha(x,y)=(-x,-y)
     \end{equation} and 
     \begin{equation}
         T_{\alpha}(x,y)=\text{Re}(x),
     \end{equation} with $(x,y)\in S^{3}\subseteq \mathbb{C}^{2},\theta\in [0,2\pi)$, and where $\text{Re}(x)$ denotes the real part of the complex number $x$.
     
     We only need to verify the cocycle condition (\ref{eqn 5.24}):
     \begin{equation}
     \begin{aligned}
         \text{Pr}_{S^{1}}\circ T_{\id}&=[0];\\
         T_{\alpha}(\alpha(x,y))+T_{\alpha}(x,y)&=T_{\alpha}(-x,-y)+T_{\alpha}(x,y)=\text{Re}(-x)+\text{Re}(x)=0.
     \end{aligned}
     \end{equation}
      By Theorem \ref{Thm 5.7}, $M$ is a principal $S^{1}$-bundle over $N$ satisfying condition (\ref{eq:difference}).
     \end{example}

  \section{Further Discussions}\label{sec: Fur}
  In this section, we outline several related results and open questions concerning Betti number bounds and topological splitting in broader geometric and topological settings.
  \subsection{Geometric Generalizations}
 We present supplementary results on the upper bounds of the first Betti number in various geometric contexts.
  \begin{theorem}[Asymptotic Cones Case, see {\cite[Theorem A]{PanYe24}}]
  Given an open manifold $(M^{m},g)$ with nonnegative Ricci curvature, suppose that an asymptotic cone of $M$ is properly contained in the Euclidean space $\mathbb{R}^{k-1}$ (where $k\ge1$). Then:
  	\begin{enumerate} [(1)]
  		\item The first Betti number satisfies $\text{b}_{1}(M)\le m-k$;
  		\item If equality holds, then $M$ is flat and isometric to either 
        \begin{itemize}
        \item $\mathbb{R}^{k}\times T^{m-k}$, or 
        \item $\mathbb{R}^{k-1}\times N^{m-k+1}$, where $N$ is also flat and diffeomorphic to the product of an open M$\ddot{o}$bius band or $T^{m-k-1}$.
        \end{itemize}
  	\end{enumerate}
  \end{theorem}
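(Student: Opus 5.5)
The plan is to pass to the universal cover and use an equivariant blow-down argument. Let $\widetilde{M}$ be the Riemannian universal cover of $M$. Let $\Gamma$ be the quotient of $\pi_{1}(M)$ by the preimage of the torsion of $H_{1}(M)$, so that $\Gamma\cong\mathbb{Z}^{b}$ with $b=\text{b}_{1}(M)$. Then $\Gamma$ acts by isometries on the intermediate cover $\bar{M}:=\widetilde{M}/\ker(\pi_{1}(M)\to\Gamma)$, and $\bar{M}/\Gamma=M$.

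\textbf{Step 1 (equivariant limit).} Take a sequence $r_{i}\to\infty$ realizing the given asymptotic cone $C$ of $M$. By Gromov precompactness and equivariant Gromov--Hausdorff convergence (Fukaya--Yamaguchi), a subsequence of $(r_{i}^{-1}\bar{M},\bar{p},\Gamma)$ converges to $(Y,y,G)$. Here $Y$ is a Ricci limit space of rectifiable dimension at most $m$ (Colding--Naber), $G$ is a closed abelian subgroup of $\mathrm{Isom}(Y)$, and $Y/G=C$.

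\textbf{Step 2 (the orbit splits).} Since $\Gamma$ is free abelian of rank $b$, I expect $G$ to contain a closed subgroup isomorphic to $\mathbb{R}^{b}$ acting by translations along lines. To see this, I would take a displacement-minimizing (``short'') basis of $\Gamma$ and use Pan's no-small-escape and bounded-displacement estimates for nonnegative Ricci curvature to show the rescaled generators converge to one-parameter groups of translations. Each $\mathbb{R}$-orbit through $y$ is then a line, so the Cheeger--Colding splitting theorem applied $b$ times gives $Y=\mathbb{R}^{b}\times Z$, with $G$ acting by translations on the first factor. Consequently $C=Y/G$ is a quotient of $Z$ (by a residual group), so $\dim C\le\dim Z\le m-b$.

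\textbf{Step 3 (the inequality).} The hypothesis on the asymptotic cone relative to $\mathbb{R}^{k-1}$ should force $\dim Z\ge k$. The key point is that $Z$ cannot collapse to anything smaller: $Z$ must carry at least one extra dimension beyond $C$, coming from the non-compact directions of $M$. Combining $b+\dim Z\le m$ with $\dim Z\ge k$ gives $\text{b}_{1}(M)\le m-k$, which is (1).

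\textbf{Step 4 (equality case).} If $b=m-k$, all the inequalities above are equalities, and $Y$ has full dimension $m$, i.e.\ it is non-collapsed. Volume-cone rigidity (Cheeger--Colding) then makes $Y$ a metric cone $\mathbb{R}^{m-k}\times C(Z')$ of maximal volume ratio. I would transfer this back to $\bar{M}$: the $\mathbb{Z}^{m-k}$-action gives $m-k$ independent lines in $\bar{M}$ itself (the classical Cheeger--Gromoll argument for lines generated by infinite-order deck transformations). So $\bar{M}=\mathbb{R}^{m-k}\times P$ with $\Gamma$ acting cocompactly on the Euclidean factor, and $P$ has nonnegative Ricci curvature and dimension $k$, with asymptotic cone of dimension at most $k-1$ in the relevant sense. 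A second use of the hypothesis should force $P$ to be $\mathbb{R}^{k}$, or $\mathbb{R}^{k-1}\times(\text{a line quotient})$, hence flat. The two listed models then come from the Bieberbach classification of the resulting flat manifolds with $\text{b}_{1}=m-k$. The non-orientable Möbius factor arises exactly when the residual holonomy is a reflection in one direction.

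\textbf{Main obstacle.} I expect the hardest step to be Step 2, namely showing that the limit group contains a full $\mathbb{R}^{b}$ acting by translations, rather than a compact torus part that would be invisible in the dimension count. My first attempt would be Pan's ``equivariant stability'' argument on short generators, bounding their displacement growth linearly. If that fails, the fallback is to use that $\Gamma$ has polynomial growth of degree exactly $b$ and compare it with the volume growth of orbits in $Y$.
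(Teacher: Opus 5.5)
The paper gives no proof of this statement; it only quotes it from Pan--Ye \cite{PanYe24}, and the hypothesis is misprinted. In Pan--Ye the asymptotic cone $C$ \emph{splits off} $\mathbb{R}^{k-1}$ properly, i.e.\ $C=\mathbb{R}^{k-1}\times Z'$ with $Z'$ not a point. Read literally, ``properly contained in $\mathbb{R}^{k-1}$'' would exclude the model $\mathbb{R}^{k}\times T^{m-k}$ itself, whose cone is $\mathbb{R}^{k}$. (Also, ``or $T^{m-k-1}$'' should read ``and $T^{m-k-1}$''.)

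Your Step 3 inherits this confusion. You try to get the lower bound $\dim Z\ge k$ from what, on your reading, is an \emph{upper} bound on $C$. The remark about ``one extra dimension from the non-compact directions'' is not an argument. With the correct hypothesis, $C\supseteq\mathbb{R}^{k-1}\times[0,\infty)$ gives $\dim_{H}C\ge k$. Since $C$ would be a $1$-Lipschitz image of $Z$, Step 3 then follows in one line from Step 2. So the whole proof rests on Step 2.

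Step 2 is a genuine gap, and in fact false. The blow-down limit $G$ of a free abelian deck group need not contain an $\mathbb{R}^{b}$ acting by translations along lines, and $Y$ need not split. Consider doubly warped products $dr^{2}+f(r)^{2}g_{S^{p-1}}+h(r)^{2}dt^{2}$ on $\mathbb{R}^{p}\times S^{1}$ with $h(r)\sim r^{-\alpha}$, $\alpha>0$, and $f(r)\sim r^{1/2}$. For $p$ large relative to $\alpha$ these carry $\Ric>0$; they are Wei's examples, the ones used by Pan--Wei. Their asymptotic cone is a ray, so they satisfy the hypothesis with $k=1$.

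In the universal cover,
$d(\tilde p,\gamma^{\ell}\tilde p)\approx\min_{R}\bigl(2R+\ell R^{-\alpha}\bigr)\sim\ell^{1/(1+\alpha)}$.
So in every blow-down the limit group is a one-parameter group, but $d(y,s\cdot y)\sim|s|^{1/(1+\alpha)}$. The orbit of $y$ is a snowflake curve of Hausdorff dimension $1+\alpha$, not a line, and the splitting theorem has nothing to act on. Pan's escape-rate and no-small-escape results control where short loops travel, not how fast their displacement grows. They give no linear \emph{lower} bound on the displacement of short generators (the upper bound is trivial), and none holds in general.

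The same example breaks Step 4. The Cheeger--Gromoll construction of lines from infinite-order deck transformations needs a cocompact action to recenter midpoints. Here $\bar M$ has $\Ric>0$, so it contains no line at all.

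Two things are missing. First, a mechanism that extracts $b$ additional dimensions from the $\mathbb{Z}^{b}$-action without assuming the limit orbit through the base point is flat. This could come, for instance, from working at points of $Y$ lying over regular points of $C$ away from that orbit, or from an orbit-count versus volume-growth comparison. Second, in the equality case, a proof that the deck action really has linear displacement growth before any splitting of $\bar M$ can be invoked.
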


\begin{theorem}[General Ricci Limit Spaces, see {\cite[Theorem 1]{Za22}}]
	Assume that there is a sequence of closed manifolds $(M_{i}^{m},g_{i})$ with $\Ric(g_{i})\ge-(m-1)$, $\diam(M_{i}) \le D$ and $\text{b}_{1}(M_{i})\ge r$. Suppose that this sequence converges to $(X,d)$ in the Gromov–Hausdorff sense. If $(X,d)$ has a k-regular point, then:
	\begin{equation}
		r-\text{b}_{1}(X)\le m-k.
	\end{equation}
\end{theorem}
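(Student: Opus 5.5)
The plan is to pass to free abelian covers and take equivariant Gromov--Hausdorff limits. This splits $\text{b}_{1}(M_{i})$ into a ``short'' part, controlled by the collapsed dimension $m-k$, and a ``long'' part, controlled by $\text{b}_{1}(X)$.

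\textbf{Covers and limit action.} First, for each $i$ let $\Gamma_{i}\cong\mathbb{Z}^{r_{i}}$, with $r_{i}=\text{b}_{1}(M_{i})\ge r$, be the free part of $H_{1}(M_{i};\mathbb{Z})$. Let $\widetilde{M}_{i}\to M_{i}$ be the corresponding normal cover with deck group $\Gamma_{i}$. Passing to a subsequence, the pointed equivariant Gromov--Hausdorff limit gives
\begin{equation*}
(\widetilde{M}_{i},\tilde{p}_{i},\Gamma_{i})\to(Y,y,G),\qquad Y/G=X .
\end{equation*}
Here $G$ is a closed abelian subgroup of the isometry group of $Y$, hence a Lie group by Cheeger--Colding and Colding--Naber.

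For a small $\varepsilon>0$, let $\Gamma_{i}^{\varepsilon}\le\Gamma_{i}$ be the subgroup generated by elements moving $\tilde{p}_{i}$ less than $\varepsilon$. The basepoints $p_{i}$ are chosen to converge to the given $k$-regular point of $X$. We then bound the ranks of the short subgroup and of the quotient separately:
\begin{equation*}
\rank\Gamma_{i}=\rank\Gamma_{i}^{\varepsilon}+\rank(\Gamma_{i}/\Gamma_{i}^{\varepsilon}).
\end{equation*}

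\textbf{Short part.} The second step, and the main obstacle, is to show $\rank\Gamma_{i}^{\varepsilon}\le m-k$ for suitable $\varepsilon$ and large $i$. Near a $k$-regular point, rescaled balls $B(p_{i},\delta)$ converge to a ball in $\mathbb{R}^{k}$. By the generalized Margulis lemma of Kapovitch--Wilking, the group generated by short loops at $p_{i}$ has a nilpotent subgroup of bounded index. For an abelian group this bounded index costs nothing in rank.

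The rank bound itself should come from the Kapovitch--Wilking induction argument. One repeatedly blows up at the regular point, splits off Euclidean factors, and each nontrivial $\mathbb{Z}$-factor of the short group eventually contributes a new dimension to a limit. Since the total dimension is at most $m$ and $k$ dimensions are already present in $X$, at most $m-k$ factors can appear. I would first try to apply their theorem on the rank of the fundamental group in collapse to a regular point directly to the image of $\Gamma_{i}^{\varepsilon}$. The subtle points are two: the short subgroup of the cover is generated by loops through one fixed point, and $\varepsilon$ must be chosen uniformly in $i$. For the latter I expect a gap argument, namely that no new short generators appear between scales $\varepsilon$ and $\delta$ after passing to a subsequence.

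\textbf{Long part.} Third, I would show $\rank(\Gamma_{i}/\Gamma_{i}^{\varepsilon})\le\text{b}_{1}(X)$. By the Sormani--Wei and Pan--Wang results, $X$ is semi-locally simply connected, and for small $\varepsilon$ the quotients $\Gamma_{i}/\Gamma_{i}^{\varepsilon}$ eventually stabilize to a group isomorphic to $G/G_{\varepsilon}$. Here $G_{\varepsilon}$ is the subgroup of $G$ generated by elements of small displacement, which contains the identity component $G_{0}$. Then $Y/G_{\varepsilon}\to X$ is a covering with free abelian deck group of rank $\rank(\Gamma_{i}/\Gamma_{i}^{\varepsilon})$, giving a surjection of $\pi_{1}(X)$ onto a free abelian group of that rank. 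Hence this rank is at most $\text{b}_{1}(X)$.

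Combining the three steps gives $r\le\text{b}_{1}(M_{i})\le(m-k)+\text{b}_{1}(X)$, which is the claimed inequality.
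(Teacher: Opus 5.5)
The paper does not prove this statement. It is quoted from Zamora~\cite{Za22} as background, so there is no internal argument to compare yours with. Judged on its own, your architecture is sound. Defining $\Gamma_i^\varepsilon$ by displacement at a lift of the $k$-regular point is a good choice, because short loops over singular points then fall into the part you control by $\text{b}_1(X)$. However, the third step is false as written.

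\textbf{The false step.} Since $G_\varepsilon$ is defined by displacement at the single point $y$, for small $\varepsilon$ it equals $G_0G_y$. It need not contain the isotropy groups $G_z$ of points $z$ in other orbits. Consequently $G/G_\varepsilon$ need not act freely on $Y/G_\varepsilon$, the map $Y/G_\varepsilon\to X$ need not be a covering, and neither $G/G_\varepsilon$ nor $\Gamma_i/\Gamma_i^\varepsilon$ need be free abelian.

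Here is an example. Take flat Klein bottles $K_i=\mathbb{R}^2/\langle a_i,b\rangle$ with $a_i(s,t)=(s+\epsilon_i,-t)$, $b(s,t)=(s,t+1)$ and $\epsilon_i\to0$; they collapse to $X=[0,\tfrac12]$.
\begin{itemize}
\item The free abelian cover is $\mathbb{R}\times\mathbb{R}/\mathbb{Z}$ with $\Gamma_i=\langle a_i\rangle$. The limit group is $G=\mathbb{R}\times\mathbb{Z}_2$, acting by $(s,[t])\mapsto(s+c,[\pm t])$.
\item Take $y=(0,[\tfrac14])$, which lies over a regular point. Then $G_\varepsilon=\mathbb{R}\times\{1\}$ and $\Gamma_i^\varepsilon=\langle a_i^2\rangle$. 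Note that $a_i$ itself is short over the endpoint $0$.
\item Hence $G/G_\varepsilon\cong\Gamma_i/\Gamma_i^\varepsilon\cong\mathbb{Z}_2$, and $Y/G_\varepsilon=S^1\to[0,\tfrac12]$ is a fold, not a covering.
\end{itemize}

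\textbf{The repair.} The non-free part of the action is torsion, so it can be quotiented out.
\begin{itemize}
\item Let $q:G\to\Lambda:=G/G_\varepsilon$ be the quotient map. Each $G_z$ is compact, so $q(G_z)$ is finite. The subgroup $\Lambda'$ generated by all the $q(G_z)$ therefore lies in the torsion of the finitely generated abelian group $\Lambda$.
\item The subgroup $H=q^{-1}(\Lambda')$ is open and contains every $G_z$. By properness, $\{g:d(gz,z)<\eta\}\subset H$ for small $\eta$.
\item Hence $Y/H\to X$ is a connected Galois covering with group $\Lambda/\Lambda'$. This gives $\pi_1(X)\twoheadrightarrow\Lambda/\Lambda'$, and so $\rank\Lambda=\rank(\Lambda/\Lambda')\le\text{b}_1(X)$. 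Semi-local simple connectivity of $X$ is not needed for this.
\end{itemize}

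You also do not need $\Gamma_i/\Gamma_i^\varepsilon\cong\Lambda$, whose injectivity half would require controlling long chains of short elements. A surjection $\Lambda\twoheadrightarrow\Gamma_i/\Gamma_i^\varepsilon$ for large $i$ suffices for the rank bound. It follows from three facts:
\begin{itemize}
\item $\Lambda$ is finitely presented.
\item $\Gamma_i$ is generated by elements of displacement at most $3D$.
\item If $\gamma_i\to g\in G_\varepsilon$, then $\gamma_i\in\Gamma_i^\varepsilon$ for large $i$. To see this, approximate each factor of a factorization of $g$ into $\varepsilon$-short elements.
\end{itemize}

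\textbf{Your second step.} The same convergence fact supplies the gap argument you anticipated. Choose $\varepsilon$ with $\{g:d(gy,y)\le\varepsilon\}\subset G_0G_y$. Then, by compactness, $\Gamma_i^\varepsilon=\Gamma_i^r$ for each fixed $r<\varepsilon$ and all large $i$. The Kapovitch--Wilking rank bound for collapse to $\mathbb{R}^k$ can then be applied at scale $r$, through a diagonal contradiction in $r$. With these adjustments your outline proves the statement.
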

  
     We recall some known estimates for higher Betti numbers; see \cite[Theorem 9.4.6]{Petersen}.
  
          \begin{theorem}[Upper Bounded Estimation of Higher-Order Betti Numbers]
  	    Given a positive integer $m$ and nonnegative constants $k$ and $D$, there exists a constant $C>0$, depending only on $m$ and $kD^{2}$, such that for any closed manifold $(M^{m},g)$ with $\mathcal{R}m\ge-k\cdot g\odot g$ and $\diam(M)\le D$, the $i$-th Betti number satisfies:
  	    \begin{equation}
  	    	\text{b}_{i}(M)\le \binom{m}{i}\exp(C(n,kD^{2})(kD^{2})^{\frac{1}{2}}).
  	    \end{equation}
          \end{theorem}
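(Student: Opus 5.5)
The statement is Gromov's Betti number theorem in the sharpened form recorded by Petersen, and I would follow Gromov's strategy: critical point theory for distance functions, combined with a covering and Mayer--Vietoris counting argument. After rescaling the metric so that $\diam(M)\le 1$, the curvature bound becomes $\mathcal{R}m\ge -kD^{2}\, g\odot g$. Hence every constant below depends only on $m$ and $\varepsilon:=kD^{2}$. I work with homology over a fixed field $F$, so that $\text{b}_{i}(M)=\dim H_{i}(M;F)$.

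\textbf{Step 1 (comparison geometry).} The first input is a comparison statement: for $p\in M$, the distance function $d_{p}$ has no critical points in an annulus $A(p;r,R)$ whose width is controlled by the curvature-scale ratio. This follows from Toponogov's comparison theorem together with Grove--Shiohama critical point theory. Consequently the balls $B(p,r)\subset B(p,R)$ deformation retract compatibly, and the inclusion-induced map $H_{*}(B(p,r))\to H_{*}(B(p,R))$ factors through a smaller ball. From this I define the \emph{content} of a ball $B(p,r)$ as
\[
\rank\bigl(H_{*}(B(p,r))\to H_{*}(B(p,5r))\bigr).
\]

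\textbf{Step 2 (induction on corank).} I would then run Gromov's induction on the ``corank'' of balls. A ball of corank $0$ is one that is topologically trivial at its scale, and so has content at most $1$ in degree $0$ and $0$ otherwise. A ball of corank $j+1$ is covered by balls of smaller radius and corank at most $j$. Bishop--Gromov volume comparison bounds the number of such sub-balls, and the multiplicity of the cover, by a constant depending only on $m$ and $\varepsilon$. A Mayer--Vietoris spectral sequence argument applied to this cover then bounds the content of the big ball by a multiple of the contents of the small ones. The whole argument has to be phrased with nested pairs of balls (radius $r$ inside radius $5r$), so that the $E^{1}$ terms are images of maps rather than homologies of balls; this is what makes the bound survive the spectral sequence. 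The corank is bounded in terms of $m$ and $\varepsilon$, since a chain of points witnessing successively larger corank gives a separated set whose size is bounded by packing. Applying the bound to $B(p,D)=M$ then gives $\text{b}_{i}(M)\le C(m,\varepsilon)$.

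\textbf{Step 3 (sharpening to the binomial form).} To obtain the refined shape $\binom{m}{i}\exp\bigl(C\sqrt{\varepsilon}\bigr)$, I would track the constants more carefully. The number of balls at each stage is governed by volume ratios, which behave like $\exp(C\sqrt{\varepsilon})$. In the limit $\varepsilon\to 0$ (almost nonnegative curvature), the argument has to reproduce the torus bound $\binom{m}{i}$. I would try to get this by splitting off the part of $\pi_{1}$ of small displacement: generalized Margulis lemma and Gromov short-basis arguments show that, on a finite cover of index at most $\exp(C\sqrt{\varepsilon})$, the relevant homology is dominated by that of a nilmanifold of dimension at most $m$, whose $i$-th Betti number is at most $\binom{m}{i}$.

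\textbf{Main obstacle.} I expect this last step to be the main difficulty: reconciling the crude combinatorial bound from the covering argument with the sharp leading term $\binom{m}{i}$. The covering argument alone only yields a bound of the form $C(m)^{\,1+\sqrt{\varepsilon}}$. My first attempt would be to pass to a bounded-index cover and compare with a flat or nilpotent model via a Leray/Serre spectral sequence, so that the sharp constant comes from the model and the exponential factor comes from the covering index.
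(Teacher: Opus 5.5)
The paper does not prove this statement. It only cites Petersen (Theorem 9.4.6), where the estimate is Gallot's and rests on the Bochner technique for harmonic forms. Your approach has a genuine gap at Step 3.

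Your Steps 1--2 sketch Gromov's Betti number theorem and are acceptable as a citation. But they only give a bound of the form $C(m)^{1+\sqrt{kD^{2}}}$, and the entire content of the statement is the leading factor $\binom{m}{i}$. Even under the most literal reading, where $C$ may blow up as $kD^{2}\to 0$ so that Gromov's bound covers every $kD^{2}>0$, the case $k=0$ still demands $\text{b}_{i}(M)\le\binom{m}{i}$ exactly. Step 3 is where this would have to come from, and it fails for two reasons.

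First, Betti numbers in degrees $i\ge 2$ are not governed by $\pi_{1}$. $S^{2}\times S^{2}$ and $\mathbb{C}\mathbb{P}^{2}$ carry metrics with $\mathcal{R}m\ge 0$, are simply connected, and have $\text{b}_{2}\neq 0$. For them the only nilpotent model a Margulis-lemma or short-basis argument can detect is a point. No Leray--Serre comparison can make that model's homology dominate theirs.

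Second, and more decisively, nothing in your Steps 1--3 uses the curvature-operator hypothesis beyond the sectional or Ricci bound it implies. Toponogov, Bishop--Gromov and the generalized Margulis lemma all work under $\sec\ge -k$. If your argument succeeded, then at $k=0$ it would prove $\text{b}_{i}(M)\le\binom{m}{i}$ for every closed manifold of nonnegative sectional curvature. That is the degree-wise form of Gromov's Betti number conjecture, which is open. So this route cannot be repaired by tracking constants more carefully.

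The missing idea is analytic rather than combinatorial: $\binom{m}{i}$ is the rank of the bundle $\Lambda^{i}T^{*}M$, not a Betti number of a model space. By Hodge theory, $\text{b}_{i}(M)$ is the dimension of the space of harmonic $i$-forms. The Weitzenb\"ock formula $\Delta_{H}=\nabla^{*}\nabla+\mathcal{K}$ on $i$-forms has a curvature term $\mathcal{K}$ that is linear in $\mathcal{R}m$, so $\langle\mathcal{K}\omega,\omega\rangle\ge -c(m)k|\omega|^{2}$ when $\mathcal{R}m\ge -k\,g\odot g$. This is exactly where the curvature-operator hypothesis, and not merely a sectional bound, is needed for $2\le i\le m-2$.

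For $k=0$, integrating $\frac{1}{2}\Delta|\omega|^{2}=|\nabla\omega|^{2}+\langle\mathcal{K}\omega,\omega\rangle$ shows every harmonic form is parallel. A parallel form is determined by its value in $\Lambda^{i}T^{*}_{x}M$ at one point, so $\text{b}_{i}\le\binom{m}{i}$.

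For $k>0$, the argument has three steps:
\begin{enumerate}
\item Kato's inequality gives $\Delta|\omega|\ge -c(m)k|\omega|$ weakly.
\item Moser iteration with Gallot's Sobolev constant, which is controlled by $\Ric$ and $\diam$ and hence by $m$ and $kD^{2}$, gives $\sup_{M}|\omega|^{2}\le A\cdot\frac{1}{\Vol(M)}\int_{M}|\omega|^{2}$ with $A=\exp(C(m,kD^{2})\sqrt{kD^{2}})$.
\item P.~Li's lemma says a space of sections of a rank-$N$ bundle satisfying such an $L^{\infty}$--$L^{2}$ bound has dimension at most $NA$. With $N=\binom{m}{i}$ this gives the stated estimate.
\end{enumerate}
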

        When $k=0$, the classification theorem for compact manifolds with nonnegative curvature operator, established by Meyer and Gallot~\cite[Theorem 10.3.7]{Petersen}, provides the equality condition.    
        \begin{theorem}
        	 For any compact manifold $ (M^{m},g) $ with a nonnegative curvature operator ($\mathcal{R}m\ge0$), if the $i$-th Betti number satisfies $\text{b}_{i}(M)=\binom{m}{i}$ for some $ i\in \{1,2,\dots,m-1\} $, then $M$ is diffeomorphic to a torus.
        \end{theorem}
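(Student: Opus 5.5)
The plan is to use the Bochner technique on harmonic $i$-forms, show that $M$ is flat, and then use the equality case to force the holonomy to be trivial. By Hodge theory, $\text{b}_{i}(M)$ is the dimension of the space $\mathcal{H}^{i}$ of harmonic $i$-forms. The Weitzenböck formula reads $\Delta\omega=\nabla^{*}\nabla\omega+\mathfrak{R}_{i}(\omega)$. I will take from Gallot–Meyer the input that the curvature term $\mathfrak{R}_{i}$ is a nonnegative symmetric endomorphism of $\Lambda^{i}T^{*}M$ whenever $\mathcal{R}m\ge0$; this is quoted, not reproved, and is the key analytic input. Integrating $\langle\Delta\omega,\omega\rangle$ over $M$ for $\omega\in\mathcal{H}^{i}$ then gives $\nabla\omega=0$. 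So every harmonic $i$-form is parallel.

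\emph{Step 1: the holonomy acts trivially on $\Lambda^{i}$.} Fix $p\in M$ and let $H\subset O(m)$ be the full holonomy group at $p$. A parallel form is determined by its value at $p$, and that value is $H$-invariant. Conversely, every $H$-invariant element of $\Lambda^{i}T^{*}_{p}M$ extends to a parallel form, which is closed and coclosed, hence harmonic. Therefore $\text{b}_{i}(M)=\dim(\Lambda^{i}T_{p}^{*}M)^{H}$. The hypothesis $\text{b}_{i}(M)=\binom{m}{i}$ then forces $H$ to act trivially on $\Lambda^{i}T^{*}_{p}M$.

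\emph{Step 2: from trivial action on $\Lambda^{i}$ to $H\subseteq\{\pm\id\}$.} Let $g\in O(m)$ act trivially on $\Lambda^{i}$ with $1\le i\le m-1$. Then $g$ preserves every $i$-dimensional subspace, since a decomposable $i$-vector is fixed up to scalar. Every line is an intersection of $i$-dimensional subspaces when $i\le m-1$, so $g$ preserves every line. Hence $g$ is a scalar, and being orthogonal, $g=\pm\id$. If $i$ is odd, $-\id$ acts as $-1$ on $\Lambda^{i}$, so $H$ is trivial. If $i$ is even, we only get $H\subseteq\{\pm\id\}$. In either case the restricted holonomy is trivial, so $M$ is flat.

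\emph{Step 3: excluding $-\id$.} By Bieberbach, $M=\mathbb{R}^{m}/\Gamma$ with $\Gamma$ a torsion-free discrete group of isometries, and the linear parts of $\Gamma$ form $H$. Suppose some $\gamma\in\Gamma$ has the form $\gamma(x)=-x+a$. Then $\gamma$ fixes $a/2$, which contradicts the freeness of the $\Gamma$-action. Hence $H=\{\id\}$, so $\Gamma$ is a lattice of translations and $M=\mathbb{R}^{m}/\Gamma$ is diffeomorphic (indeed isometric) to a flat torus $T^{m}$.

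\emph{Main obstacle.} The substantive step is the pointwise nonnegativity of the Weitzenböck curvature term $\mathfrak{R}_{i}$ under $\mathcal{R}m\ge0$. This requires the Gallot–Meyer algebraic computation expressing $\mathfrak{R}_{i}$ through the eigenvalues of the curvature operator, and I would cite it as in \cite[Theorem 10.3.7]{Petersen}. The remaining steps (Hodge theory, the holonomy principle, the linear algebra for $\pm\id$, and the Bieberbach fixed-point argument) are routine.
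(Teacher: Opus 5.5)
Your argument is correct, and it takes a different route from the paper. The paper gives no proof of its own. It simply defers to the Meyer--Gallot classification of compact manifolds with $\mathcal{R}m\ge 0$ in \cite[Theorem 10.3.7]{Petersen}, i.e.\ the splitting of the universal cover into a Euclidean factor and compact factors that are symmetric spaces or have the real cohomology of a sphere or of $\mathbb{C}\mathbb{P}^{n}$.

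You instead use only the Bochner half of Gallot--Meyer: harmonic forms are parallel when $\mathcal{R}m\ge 0$. From there you proceed in three steps:
\begin{enumerate}
\item the holonomy principle gives $\text{b}_{i}(M)=\dim(\Lambda^{i}T^{*}_{p}M)^{H}$;
\item the kernel of $O(m)\to GL(\Lambda^{i})$ lies in $\{\pm\id\}$ for $1\le i\le m-1$, which gives flatness;
\item the fixed point $a/2$ excludes $-\id$ when $i$ is even.
\end{enumerate}

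The classification route does not actually save this work. Knowing the splitting of $\widetilde{M}$ does not by itself compute $\text{b}_{i}$ of the quotient $M$. One still has to show that $\text{b}_{i}(M)<\binom{m}{i}$ in two situations: when there is a nonflat factor (a nonzero holonomy algebra acts nontrivially on $\Lambda^{i}$), and when the deck group has nontrivial linear holonomy. That is exactly your Steps 1--3.

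Your route therefore buys transparency. It shows precisely where $1\le i\le m-1$ is used and why even $i$ needs the extra freeness argument. It also gives the slightly stronger conclusion that $M$ is isometric to a flat torus. The citation buys only brevity.

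Two minor points:
\begin{enumerate}
\item The description $M=\mathbb{R}^{m}/\Gamma$ with $\Gamma$ acting freely is the Killing--Hopf theorem, not Bieberbach's. Once $H$ is trivial, $\Gamma$ is a discrete cocompact group of translations, hence a lattice, so Bieberbach's theorem is not actually needed.
\item You implicitly use that $M$ is connected, and this is genuinely required: $T^{m}\sqcup S^{m}$ with flat and round metrics satisfies every other hypothesis but is not a torus.
\end{enumerate}
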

    
        We propose the following conjecture.
          \begin{conjecture}
        	Given a positive integer $m$, there exists an $\epsilon(m)>0$ such that for any compact manifold $ (M^{m},g) $ with diameter $\diam(M)=1$, if the curvature operator $\mathcal{R}m\ge -\epsilon g\odot g $, then the i-th Betti number satisfies $\text{b}_{i}(M)\le \binom{m}{i} $. Moreover, the equality holds if and only if $M$ is diffeomorphic to a torus.
          \end{conjecture}
 
    \subsection{Topological Generalizations}
         A natural question arises: under what conditions does a general principal $G$-bundle split topologically or smoothly?
         \begin{conjecture}
         	Let $B$ be a topological (resp. smooth) principal
            $G$-bundle over a topological (resp. smooth) manifold $X$, where $G$ is a Lie group acting freely (resp. smoothly and freely) on $B$, and the fibration is continuous (resp. smooth). Then $B$ is homeomorphic (resp. diffeomorphic) to $X \times G$ if and only if the following condition holds: the boundary maps $ \partial_{i}: \pi_{i+1}(X)\to \pi_{i}(G)$ are trivial for all $i\ge1$, and the long homotopy exact sequence of the bundle induces short splitting exact sequences of homotopy groups:
        \begin{equation}
         \xymatrix{
         0 \ar[r] & \pi_{i}(G) \ar[r]^{l_{*}} & \pi_{i}(B) \ar[r]^{p_{*}} & \pi_{i}(X) \ar[r] & 0 
         }.
         \end{equation}
         \end{conjecture}
         
\appendix
\section{Bijection Between Smooth and Topological Principal Bundles}
In this appendix, we review the theorem on the one-to-one correspondence between smooth principal bundles and topological principal bundles with torus structure group (here, a smooth principal bundle means that the total space, fibration map, structure group, and local trivializations are all smooth), along with its proof, where the method of proof employs the Poincar$\acute{e}$ lemma in $\check{C}$ech cohomology. We include an elementary proof here for completeness and we do not claim any originality for this part. 

\begin{theorem}[See {\cite[Section 4.3 in Preliminaries]{Raghunathan}}]\label{Thm: A.1}
     Let $N^{k}$ be a closed $k$-dimensional smooth manifold and $T^{m-k}$ an $(m-k)$-dimensional torus. Denote by $\Prin^{\text{Smooth}}_{T^{m-k}}(N)$ the set of smooth equivalence classes of smooth principal $T^{m-k}$-bundles over $N$, and denote by $\Prin^{\text{Top}}_{T^{m-k}}(N)$ the set of topological equivalence classes of topological principal $T^{m-k}$-bundles over $N$. Then the natural map 
    \begin{equation}
    \begin{aligned}
        \Phi : \Prin^{\text{Smooth}}_{T^{m-k}}(N)&\to \Prin^{\text{Top}}_{T^{m-k}}(N),\\
        M^{\text{Smooth}}&\mapsto \text{Topological bundle structure on }M^{\text{Smooth}}
    \end{aligned}
    \end{equation}
    is a bijection.
\end{theorem}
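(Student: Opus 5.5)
We show that $\Phi$ is injective and surjective by comparing two sheaf cohomologies. Let $\underline{T}^{\text{Sm}}$ denote the sheaf of germs of smooth maps $N\to T^{m-k}$ and $\underline{T}^{\text{Top}}$ the sheaf of germs of continuous ones. Both sheaves are abelian, since $T^{m-k}$ is abelian. Cocycles of transition functions give identifications $\Prin^{\text{Smooth}}_{T^{m-k}}(N)\cong \check{H}^{1}(N;\underline{T}^{\text{Sm}})$ and $\Prin^{\text{Top}}_{T^{m-k}}(N)\cong \check{H}^{1}(N;\underline{T}^{\text{Top}})$. For the smooth side this uses that every smooth principal bundle over a closed manifold is trivial over a good cover. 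Under these identifications $\Phi$ is the map induced by the inclusion $\underline{T}^{\text{Sm}}\hookrightarrow \underline{T}^{\text{Top}}$, so it suffices to show this inclusion induces an isomorphism on $\check{H}^{1}$.

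The key tool is the exponential exact sequence of sheaves. Write $\underline{\mathbb{R}}^{\text{Sm}}$ and $\underline{\mathbb{R}}^{\text{Top}}$ for the sheaves of germs of smooth and continuous maps into $\mathbb{R}^{m-k}$. Composition with $\text{pr}_{T^{m-k}}$ gives short exact sequences
\begin{equation}
0\to \underline{\mathbb{Z}^{m-k}}\to \underline{\mathbb{R}}^{\text{Sm}}\to \underline{T}^{\text{Sm}}\to 0,\qquad 0\to \underline{\mathbb{Z}^{m-k}}\to \underline{\mathbb{R}}^{\text{Top}}\to \underline{T}^{\text{Top}}\to 0.
\end{equation}
Exactness on the right is a local statement: on a contractible (for instance, convex) chart, any smooth or continuous map to $T^{m-k}$ lifts to $\mathbb{R}^{m-k}$ by the lifting criterion, Proposition~\ref{prop: lifting cri}, and the lift is smooth when the map is. The inclusion of smooth into continuous germs gives a morphism between the two sequences that is the identity on the locally constant sheaf $\underline{\mathbb{Z}^{m-k}}$.

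Next we take the long exact cohomology sequences and apply the five lemma. Both $\underline{\mathbb{R}}^{\text{Sm}}$ and $\underline{\mathbb{R}}^{\text{Top}}$ are fine sheaves, since partitions of unity exist on the closed manifold $N$. Hence their higher cohomology vanishes, and both connecting maps give isomorphisms $\check{H}^{1}(N;\underline{T}^{\bullet})\cong \check{H}^{2}(N;\underline{\mathbb{Z}^{m-k}})$. These isomorphisms are compatible with the comparison morphism by naturality of the connecting homomorphism. It follows that $\check{H}^{1}(\underline{T}^{\text{Sm}})\to \check{H}^{1}(\underline{T}^{\text{Top}})$ is an isomorphism, so $\Phi$ is a bijection. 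As a consistency check, $\check{H}^{2}(N;\underline{\mathbb{Z}^{m-k}})\cong H^{2}(N;\mathbb{Z}^{m-k})$, which agrees with Corollary~\ref{Cor:2.14}.

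For an elementary proof avoiding sheaf theory, the same argument can be done by hand on a finite good cover $\{U_i\}$. For surjectivity, take continuous transitions $g_{ij}$ and lift them to $\tilde g_{ij}:U_{ij}\to\mathbb{R}^{m-k}$. The integer cocycle $n_{ijk}=\tilde g_{ij}+\tilde g_{jk}-\tilde g_{ki}$ is then constant. Smooth the lifts to smooth $\tilde g'_{ij}$ with the same $n_{ijk}$, correcting the cocycle defect with a partition of unity. For injectivity, a continuous isomorphism between smooth bundles is a continuous $0$-cochain $h_i$; its lift is homotopic through cochains to a smooth one with the same coboundary. The main obstacle is exactly this smoothing step, specifically keeping the cocycle identity exact after smoothing. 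The Poincar\'e-lemma/partition-of-unity argument handles it: the real-valued defect is a Čech coboundary that can be written explicitly as $\sum_k \phi_k(\cdot)$ applied to the defect. In the sheaf formulation this step is absorbed into fineness, which is why I prefer that route.
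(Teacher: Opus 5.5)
Your proof is correct, but it follows a different route from the paper's.

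\textbf{What the paper does.} It stays throughout with $T^{m-k}$-valued cochains on one fixed finite cover by sets diffeomorphic to $\mathbb{R}^{k}$, and never passes to integer coefficients. It uses Whitney approximation (Lemma~\ref{lem: A.2}) to replace the continuous transition maps, or the continuous gauge maps $\lambda_i^{\text{Top}}$, by smooth ones within $\epsilon$. The resulting \emph{small} defect is then killed by the explicit formula $\text{pr}_{T^{m-k}}\bigl(-\sum_l\rho_l\,\log_{T^{m-k}}\circ(\cdot)\bigr)$. This is legitimate only because the defect stays inside the domain of the local logarithm. Lemma~\ref{lem: A.4}(1) (cocycles within $4\epsilon$ are cohomologous) then gives surjectivity, and Lemma~\ref{lem: A.4}(2) gives injectivity.

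\textbf{What you do instead.} You use the exponential sequences and the fineness of $\underline{\mathbb{R}}^{\text{Sm}}$ and $\underline{\mathbb{R}}^{\text{Top}}$ to identify both sides compatibly with $\check H^{2}(N;\underline{\mathbb{Z}^{m-k}})$. The kernel sheaf is the same locally constant sheaf in both categories, so the comparison map is forced to be an isomorphism. This needs no approximation theorem and no $\epsilon$-bookkeeping, and it yields Corollary~\ref{Cor:2.14} in the smooth category as a by-product.

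\textbf{What each route costs.} Yours relies on sheaf-theoretic input: on the paracompact space $N$, \v{C}ech cohomology has long exact sequences and fine sheaves are acyclic. It also uses special features of the torus: it is abelian, its universal cover $\mathbb{R}^{m-k}$ is contractible, and the kernel is discrete. The paper's ``approximate, then correct a small cocycle'' argument is more elementary and self-contained. Its overall strategy, suitably adapted to non-commutativity, is also the one that carries over to other structure groups.

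Your by-hand variant differs from the paper's as well: it lifts to $\mathbb{R}^{m-k}$ over a good cover and corrects an integer defect, rather than a small $T^{m-k}$-valued one. There is a minor sign slip there: with the paper's convention the integer cocycle is $\tilde g_{ij}+\tilde g_{jk}+\tilde g_{ki}$, not $\tilde g_{ij}+\tilde g_{jk}-\tilde g_{ki}$.
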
 

    We adopt the following notations, along with a brief explanation of the rationale behind some definitions.
\begin{enumerate}
    \item [$\bullet$] $\text{pr}_{T^{m-k}}:\mathbb{R}^{m-k}\to T^{m-k}$ denotes the standard universal covering map of $T^{m-k}$;
    
    The element $[0]$ denotes the identity of $T^{m-k}$;
    
    The metric $g _{\text{Eucl}}$ denotes the standard flat product metric on $T^{m-k}=\displaystyle\prod_{1}^{m-k}S^{1}$;

    $d$ denotes the distance function induced by $g _{\text{Eucl}}$;
    
    $\inj(T^{m-k},g _{\text{Eucl}})$ denotes the injectivity radius of $g _{\text{Eucl}}$;
    \item [$\bullet$] Choose a small positive number $\epsilon<\frac{\inj(T^{m-k},g _{\text{Eucl}})}{10000}$ such that the restriction 
    \begin{equation}
        \text{pr}_{T^{m-k}}|_{B(0, 100\epsilon)}:B(0, 100\epsilon)\to B([0], 100\epsilon)
    \end{equation}admits a smooth inverse map $\log_{T^{m-k}}$ near $[0]\in T^{m-k}$ and $\log_{T^{m-k}}([0])=0$;
    \item [$\bullet$] By compactness of $N$, we can choose a finite open cover $\{U_{i}\}_{i\in I}$ such that each $U_{i}$ is diffeomorphic to $\mathbb{R}^{k}$. We choose a partition of unity $\{\rho_{i}\}_{i\in I}$ subordinate to $\{U_{i}\}_{i\in I}$
         such that: 
         \begin{equation}
             \supp(\rho_{i})\subset U_{i},~~\text{and} \displaystyle \sum_{i\in I}\rho_{i}=1;
         \end{equation}
    \item [$\bullet$] For any continuous (resp. smooth) map $A:U_{i}\to\mathbb{R}^{m-k}$, we obtain a well-defined continuous (resp. smooth) map
    \begin{equation}
        \rho_{i}\cdot A: N\to \mathbb{R}^{m-k}
    \end{equation} on $N$;
    \item [$\bullet$] Let $\{g_{ij}\}_{i,j\in I}$ with $g_{ij}:U_{i}\cap U_{j}\to G$ be the family of continuous (resp. smooth) transition maps corresponding to the topological  (resp. smooth) principal bundle $M$ over $N$. They satisfy $g_{ii}=[0]$ and the cocycle condition:
    \begin{equation}
        g_{ij}+g_{jl}+g_{li}=[0]
    \end{equation}on $U_{i}\cap U_{j}\cap U_{l}$;
    \item [$\bullet$] $M_{1}\cong_{\text{Top bundle iso}} M_{2}$ denotes the topological equivalence of topological bundle $M_{1},M_{2}$ over $N$. 
    
    $M_{1}\cong_{\text{Diffeo bundle iso}} M_{2}$ denotes the smooth equivalence of smooth bundle $M_{1},M_{2}$ over $N$.
\end{enumerate}

We will need the following three lemmas.
\begin{lemma}[Corollary of Whitney’s Approximation Theorem {\cite[Section 6.7]{Steenrod}}]\label{lem: A.2}
    Let $U$ be a manifold. For any continuous map $A:U\to T^{m-k}$, there exists a smooth map $A':U\to T^{m-k}$ such that $d(A(x),A'(x))<\epsilon$ for all $x\in U$.
\end{lemma}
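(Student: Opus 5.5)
Lemma~\ref{lem: A.2} is a standard smoothing result. The plan is to reduce it to approximating maps into Euclidean space, where Whitney's approximation theorem applies directly. The torus is not a vector space, so we cannot average maps into it. Instead, we embed $T^{m-k}$ smoothly in a Euclidean space and use a tubular neighbourhood retraction to bring approximations back onto the torus.

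\textbf{Step 1: embedding and retraction.} Use the standard embedding
\begin{equation*}
\iota:T^{m-k}\hookrightarrow\mathbb{R}^{2(m-k)},\qquad ([\theta_1],\dots,[\theta_{m-k}])\mapsto(\cos\theta_1,\sin\theta_1,\dots,\cos\theta_{m-k},\sin\theta_{m-k}).
\end{equation*}
Coordinatewise radial projection gives a smooth retraction $r:W\to T^{m-k}$ defined on the open tubular set
\begin{equation*}
W=\{\,z : \text{each coordinate pair } (z_{2i-1},z_{2i}) \text{ has norm in } (1/2,3/2)\,\}.
\end{equation*}
Choose $\delta\in(0,1/2)$. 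Because $T^{m-k}$ is compact, $r$ is uniformly continuous on the closed $\delta$-neighbourhood of $\iota(T^{m-k})$, and $\iota$ is bi-Lipschitz onto its image. Hence we can shrink $\delta$ so that every $z$ with $|z-\iota(p)|<\delta$ satisfies $z\in W$ and $d(r(z),p)<\epsilon$.

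\textbf{Step 2: Whitney approximation.} Apply Whitney's approximation theorem to the continuous map $\iota\circ A:U\to\mathbb{R}^{2(m-k)}$. This gives a smooth $B:U\to\mathbb{R}^{2(m-k)}$ with $|B(x)-\iota(A(x))|<\delta$ for all $x$. The theorem holds on noncompact $U$ with a positive continuous error function, and a constant error is a special case. If one prefers to avoid the general statement, one can build $B$ directly. Take a locally finite cover of $U$ by sets on which $\iota\circ A$ oscillates by less than $\delta$, pick a point value of $\iota\circ A$ on each set, and glue these values with a partition of unity. This is exactly the averaging used for $\rho_i\cdot A$ in the appendix notation. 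The resulting sum is a convex combination of values within $\delta$ of $\iota(A(x))$, so $B$ is within $\delta$ of $\iota\circ A$ pointwise.

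\textbf{Step 3: conclusion.} Define $A':=r\circ B$. By Step 1, $B(x)\in W$ for every $x$, so $A'$ is well defined. It is smooth as a composition of smooth maps, and $d(A'(x),A(x))<\epsilon$ for all $x$.

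\textbf{Main obstacle.} The only delicate point is making the choice of $\delta$ uniform. This relies on compactness of the torus, not of $U$, so the argument works for arbitrary manifolds $U$ such as the $U_i\cong\mathbb{R}^k$ used later. Alternatively, one can avoid the embedding entirely. Locally lift $A$ through $\log_{T^{m-k}}$ on small sets and average the lifts with a partition of unity. However, consistent lifts need not exist globally, so the embedding route is cleaner.
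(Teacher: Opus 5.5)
Your argument is correct and follows the route the paper has in mind: the paper gives no proof, only citing the lemma as a corollary of Whitney's approximation theorem, and your reduction is the standard derivation of that corollary (embed $T^{m-k}$ in $\mathbb{R}^{2(m-k)}$, approximate $\iota\circ A$ uniformly, and push back with the radial retraction, choosing $\delta$ uniformly by compactness of the torus). The only loose point is your remark that the partition-of-unity averaging is ``exactly'' the appendix's $\rho_i\cdot A$ construction, which is not accurate but is also not needed for the proof.
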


\begin{lemma}[See {\cite[Section 2, Lemma 2.10]{Steenrod}}]\label{lem: A.3}
    Let $G$ be a compact Lie group and $N$ a compact manifold. For two topological (resp. smooth) principal $G$-bundles $M_{1}$ and $M_{2}$ over $N$, let $\{g^{(a)}_{ij}\}_{i,j\in I}$($a=1,2$, $g^{(a)}_{ij}:U_{i}\cap U_{j}\to G$) be the family of continuous (resp. smooth) transition maps corresponding to $M_{a}$.
    Then $M_{1}$ and $M_{2}$ are topological (resp. smooth) bundle isomorphic if and only if there exists a family of continuous (resp. smooth) maps $\{\lambda_{i}\}_{i\in I}$ ($\lambda_{i}:U_{i}\to G$) such that 
    \begin{equation}
        g^{(1)}_{ij}=(\lambda_{i})^{-1}\cdot g^{(2)}_{ij}\cdot \lambda_{j}.
    \end{equation}
\end{lemma}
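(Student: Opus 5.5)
The statement immediately above is Lemma~\ref{lem: A.3}: two principal $G$-bundles are isomorphic if and only if their transition cocycles are cohomologous via a family $\{\lambda_i\}$. I will prove both directions directly from local trivializations, working over the same cover $\{U_i\}_{i\in I}$. The argument is identical in the topological and smooth categories, provided every map constructed is a composition of the given data with group operations of $G$.

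Setup: for $a=1,2$ fix trivializations $\phi^{(a)}_i: p_a^{-1}(U_i)\to U_i\times G$, equivariant for the right $G$-action. Fix the convention that $\phi^{(a)}_i\circ(\phi^{(a)}_j)^{-1}(x,g)=(x,g^{(a)}_{ij}(x)\,g)$ on $(U_i\cap U_j)\times G$.

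$(\Rightarrow)$ Let $F:M_1\to M_2$ be a bundle isomorphism, i.e. fiber-preserving, $G$-equivariant, covering $\id_N$. For each $i$, the composite $\phi^{(2)}_i\circ F\circ(\phi^{(1)}_i)^{-1}$ is an equivariant self-map of $U_i\times G$ over $U_i$. Equivariance for the right action forces it to be left multiplication, $(x,g)\mapsto (x,\mu_i(x)g)$, with $\mu_i(x)$ the $G$-component of the image of $(x,e)$. Hence $\mu_i$ is continuous (resp. smooth). Comparing the two ways of going from chart $j$ of $M_1$ to chart $i$ of $M_2$ gives $\mu_i g^{(1)}_{ij}=g^{(2)}_{ij}\mu_j$ on $U_i\cap U_j$. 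Setting $\lambda_i:=\mu_i$ yields $g^{(1)}_{ij}=\lambda_i^{-1}g^{(2)}_{ij}\lambda_j$.

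$(\Leftarrow)$ Given $\{\lambda_i\}$, define $F$ on $p_1^{-1}(U_i)$ by $F:=(\phi^{(2)}_i)^{-1}\circ L_{\lambda_i}\circ\phi^{(1)}_i$, where $L_{\lambda_i}(x,g)=(x,\lambda_i(x)g)$. The main step is checking that these local definitions agree on overlaps. On $p_1^{-1}(U_i\cap U_j)$, agreement of the $i$- and $j$-definitions reduces to the identity $\lambda_i g^{(1)}_{ij}=g^{(2)}_{ij}\lambda_j$, which is exactly the hypothesis. Thus $F$ is a globally defined continuous (resp. smooth) equivariant map over $\id_N$. Its inverse is built the same way from $\lambda_i^{-1}$, so $F$ is an isomorphism.

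There is no real obstacle. The only care needed is bookkeeping of left versus right multiplication and of the direction of the transition maps, which must match the convention in the statement. In the abelian torus case used later in the appendix this reduces to $g^{(1)}_{ij}=g^{(2)}_{ij}+\lambda_j-\lambda_i$. If the two bundles were originally trivialized over different covers, I would first pass to a common refinement, which changes neither side of the equivalence.
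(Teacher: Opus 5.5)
Your proof is correct and is essentially the standard argument behind the cited result of Steenrod; the paper gives no proof of its own beyond that citation. You extract $\lambda_i$ as the $G$-component of $\phi^{(2)}_i\circ F\circ(\phi^{(1)}_i)^{-1}$ along $U_i\times\{e\}$, and conversely you glue the local maps $(\phi^{(2)}_i)^{-1}\circ L_{\lambda_i}\circ\phi^{(1)}_i$, whose compatibility on overlaps is exactly $\lambda_i g^{(1)}_{ij}=g^{(2)}_{ij}\lambda_j$. Your convention $\phi^{(a)}_i\circ(\phi^{(a)}_j)^{-1}=L_{g^{(a)}_{ij}}$ makes the stated formula hold verbatim for nonabelian $G$, and in the torus case it gives the additive relation $g^{(1)}_{ij}-g^{(2)}_{ij}=\lambda_j-\lambda_i$ used in Lemma~\ref{lem: A.4}.
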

\begin{lemma}\label{lem: A.4}
    Let $M_{1}$ and $M_{2}$ be two topological principal $T^{m-k}$-bundles over $N$ and $\{g^{(a)}_{ij}\}_{i,j\in I}$($a=1,2$, $g^{(a)}_{ij}:U_{i}\cap U_{j}\to G$) the family of transition maps corresponding to $M_{a}$.
    \begin{enumerate}[(1)]
        \item If $d(g^{(1)}_{ij},g^{(2)}_{ij})<4\epsilon$ for all $i,j\in I$, then $M_{1}\cong_{\text{Top bundle iso}} M_{2}$;
        \item If $g^{(a)}_{ij}$($i,j\in I$, $a=1,2$) are all smooth and $M_{1}\cong_{\text{Top bundle iso}} M_{2}$, then $M_{1}\cong_{\text{Diffeo bundle iso}} M_{2}$.
    \end{enumerate}
\end{lemma}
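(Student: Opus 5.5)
The plan is to work with the differences $\lambda_{ij}:=g^{(2)}_{ij}-g^{(1)}_{ij}$ (the group $T^{m-k}$ is abelian), and to show they form a Čech coboundary via a partition-of-unity, Poincaré-lemma type argument.

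For part (1), since $d(g^{(1)}_{ij},g^{(2)}_{ij})<4\epsilon<100\epsilon$, each $\lambda_{ij}:U_i\cap U_j\to B([0],4\epsilon)$ has a well-defined continuous logarithm
\[
\mu_{ij}:=\log_{T^{m-k}}(\lambda_{ij}):U_i\cap U_j\to\mathbb{R}^{m-k},\qquad |\mu_{ij}|<4\epsilon .
\]
Both families satisfy the cocycle condition, so $\lambda_{ij}+\lambda_{jl}+\lambda_{li}=[0]$ on triple overlaps. Moreover $|\mu_{ij}+\mu_{jl}+\mu_{li}|<12\epsilon$, and this is far below the injectivity radius of $\mathrm{pr}_{T^{m-k}}$. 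Hence $\mu_{ij}+\mu_{jl}+\mu_{li}=0$ exactly, and $\{\mu_{ij}\}$ is an $\mathbb{R}^{m-k}$-valued Čech $1$-cocycle; it is also antisymmetric, with $\mu_{ii}=0$. Next, define
\[
\nu_i:=\sum_{l\in I}\rho_l\,\mu_{il}:U_i\to\mathbb{R}^{m-k},
\]
where each term is extended by zero outside $\supp\rho_l$. Using the cocycle identity $\mu_{ij}=\mu_{il}-\mu_{jl}$ on $U_i\cap U_j\cap U_l$ and $\sum_l\rho_l=1$, one gets $\nu_i-\nu_j=\mu_{ij}$ on $U_i\cap U_j$. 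Set $\lambda_i:=\mathrm{pr}_{T^{m-k}}\circ\nu_i$. Then
\[
g^{(2)}_{ij}=g^{(1)}_{ij}+\lambda_i-\lambda_j,
\]
which, after matching sign and ordering conventions, is exactly the relation in Lemma~\ref{lem: A.3}, since for an abelian group $(\lambda_i)^{-1}g\lambda_j$ becomes $g-\lambda_i+\lambda_j$. Lemma~\ref{lem: A.3} then gives $M_1\cong_{\text{Top bundle iso}}M_2$. The one point that needs care is that $\rho_l\mu_{il}$ is continuous on $U_i$, because $\mu_{il}$ is defined only on $U_i\cap U_l$. This holds since $\supp\rho_l$ is closed in $N$ and contained in $U_l$.

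For part (2), Lemma~\ref{lem: A.3} gives continuous maps $\lambda_i:U_i\to T^{m-k}$ with $g^{(1)}_{ij}=g^{(2)}_{ij}-\lambda_i+\lambda_j$; the smoothness of the $g^{(a)}_{ij}$ means that each $\lambda_i-\lambda_j$ is smooth on $U_i\cap U_j$. The task is to replace the $\lambda_i$ by smooth maps while keeping all differences unchanged. First, apply Lemma~\ref{lem: A.2} to get smooth maps $\lambda'_i$ with $d(\lambda_i,\lambda'_i)<\epsilon$. Then $e_i:=\lambda_i-\lambda'_i$ has a continuous logarithm $\eta_i$ with $|\eta_i|<\epsilon$. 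The family $\eta_i-\eta_j$ is small, and $\mathrm{pr}_{T^{m-k}}(\eta_i-\eta_j)=(\lambda_i-\lambda_j)-(\lambda'_i-\lambda'_j)$ is smooth. By local injectivity of $\mathrm{pr}_{T^{m-k}}$, this means $\eta_{ij}:=\eta_i-\eta_j$ is a smooth $\mathbb{R}^{m-k}$-valued cocycle. Apply the same partition-of-unity construction, $\tilde\eta_i:=\sum_l\rho_l\eta_{il}$, which is now smooth and satisfies $\tilde\eta_i-\tilde\eta_j=\eta_i-\eta_j$. Put $\tilde\lambda_i:=\lambda'_i+\mathrm{pr}_{T^{m-k}}\circ\tilde\eta_i$. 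These maps are smooth, and $\tilde\lambda_i-\tilde\lambda_j=\lambda_i-\lambda_j$, so the smooth direction of Lemma~\ref{lem: A.3} yields $M_1\cong_{\text{Diffeo bundle iso}}M_2$.

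I expect the main obstacle to be the step in part (2) where smoothness of $\eta_i-\eta_j$ is deduced from smoothness of its image in the torus. This needs the lift to stay within the injectivity range, which is guaranteed by the choice $\epsilon<\inj/10000$. It also needs the argument to be genuinely about differences of the $\eta_i$, since the individual $\eta_i$ are only continuous. The remaining steps are routine applications of the partition of unity.
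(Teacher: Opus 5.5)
Your proposal is correct and follows the paper's route. Part (1) trivializes the small logarithm cocycle of the differences $g^{(1)}_{ij}-g^{(2)}_{ij}$ with the partition of unity and then applies Lemma~\ref{lem: A.3}. Part (2) smooths the continuous gauge maps via Lemma~\ref{lem: A.2} and applies the same trick to the small smooth cocycle of corrections; your $\eta_i-\eta_j$ is the logarithm of the paper's $\tau_{ji}$, and your final gauge maps agree with the paper's $\lambda^{\text{Smooth}}_i+\lambda'_i$. You also make explicit two steps the paper leaves implicit: that the real lifts satisfy the cocycle identity exactly, and that $\rho_l\mu_{il}$ is continuous on $U_i$.
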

\begin{proof}
    \begin{enumerate}[(1)]
        \item  Denote 
             \begin{equation}
                 \phi_{ij}:=g^{(1)}_{ij}-g^{(2)}_{ij}.
             \end{equation}
            Then $\{\phi_{ij}\}_{i,j\in I}$ satisfies the cocycle condition:
            \begin{equation}
                \begin{aligned}
                    \phi_{ij}+\phi_{jl}+\phi_{li}&=(g^{(1)}_{ij}-g^{(2)}_{ij})+(g^{(1)}_{jl}-g^{(2)}_{jl})+(g^{(1)}_{li}-g^{(2)}_{li})\\
                    &=(g^{(1)}_{ij}+g^{(1)}_{jl}+g^{(1)}_{li})-(g^{(2)}_{ij}+g^{(2)}_{jl}+g^{(2)}_{li})=[0],
                \end{aligned}
            \end{equation}
              and
              \begin{equation}
                d(\phi_{ij},[0])=d(g^{(1)}_{ij}-g^{(2)}_{ij},[0])=d(g^{(1)}_{ij},g^{(2))}_{ij})<4\epsilon.   
              \end{equation}

              We need to find a family of maps $\{\mu_{i}\}_{i\in I}$($\mu_{i}:U_{i}\to T^{m-k}$) such that 
           \begin{equation}
               \phi_{ij}=\mu_{j}-\mu_{i}.
           \end{equation}

            Define 
            \begin{equation}
                \mu_{i}(x):=\text{pr}_{T^{m-k}}(-\displaystyle\sum_{j\in I}\rho_{j}(x)\cdot(\log_{T^{m-k}}\circ\phi_{ij})(x))
            \end{equation} for all $x\in U_{i}$.

           Then $\{\mu_{i}\}_{i\in I}$ is a family of maps satisfies:
           \begin{equation}
           \begin{aligned}
               \mu_{j}-\mu_{i}&=\text{pr}_{T^{m-k}}(-\displaystyle\sum_{l\in I}\rho_{l}\cdot(\log_{T^{m-k}}\circ\phi_{jl}))-\text{pr}_{T^{m-k}}(-\displaystyle\sum_{l\in I}\rho_{l}\cdot(\log_{T^{m-k}}\circ\phi_{il}))\\
               &=\text{pr}_{T^{m-k}}(\displaystyle\sum_{l\in I}\rho_{l}\cdot(\log_{T^{m-k}}(\phi_{il}-\phi_{jl})))\\
               &=\text{pr}_{T^{m-k}}(\displaystyle\sum_{l\in I}\rho_{l}\cdot(\log_{T^{m-k}}\circ\phi_{ij}))\\
               &=\text{pr}_{T^{m-k}}\circ\log_{T^{m-k}}\circ\phi_{ij}=\phi_{ij}.
           \end{aligned}   
           \end{equation}

           Consequently, the family $\{\mu_{i}\}_{i\in I}$ of maps satisfies 
           \begin{equation}
             g^{(1)}_{ij}-g^{(2)}_{ij}=\mu_{j}-\mu_{i}
          \end{equation} on $U_{i}\cap U_{j}$.
               By Lemma \ref{lem: A.3}, we obtain $M_{1}\cong_{\text{Top bundle iso}} M_{2}$.
        \bigskip
        \bigskip
          \item By Lemma \ref{lem: A.3}, there exists a family of continuous maps 
          \begin{equation}
              \lambda^{\text{Top}}_{i}:U_{i}\to T^{m-k}
          \end{equation}
          such that 
          \begin{equation}
             g^{(1)}_{ij}-g^{(2)}_{ij}=\lambda^{\text{Top}}_{j}-\lambda^{\text{Top}}_{i}
          \end{equation} on $U_{i}\cap U_{j}$.
          
          By Lemma \ref{lem: A.2}, there exists a family of smooth maps 
          \begin{equation}
              \lambda^{\text{Smooth}}_{i}: U_{i}\to T^{m-k}
          \end{equation}
          such that $d(\lambda^{\text{Top}}_{i},\lambda^{\text{Smooth}}_{i})<\epsilon$.
           Define \begin{equation}
               \tau_{ij}:=g^{(1)}_{ij}-g^{(2)}_{ij}-\lambda^{\text{Smooth}}_{j}+\lambda^{\text{Smooth}}_{i}.
           \end{equation}
           Then $\{\tau_{ij}\}_{i,j\in I}$ satisfies the cocycle condition:
           \begin{equation}
               \begin{aligned}
                   &\tau_{ij}+\tau_{jl}+\tau_{li}\\
                   =&(g^{(1)}_{ij}-g^{(2)}_{ij}-\lambda^{\text{Smooth}}_{j}+\lambda^{\text{Smooth}}_{i})+(g^{(1)}_{jl}-g^{(2)}_{jl}-\lambda^{\text{Smooth}}_{l}+\lambda^{\text{Smooth}}_{j})
                   +(g^{(1)}_{li}-g^{(2)}_{li}-\lambda^{\text{Smooth}}_{i}+\lambda^{\text{Smooth}}_{l})\\
                   =&(g^{(1)}_{ij}+g^{(1)}_{jl}+g^{(1)}_{li})-(g^{(2)}_{ij}+g^{(2)}_{jl}+g^{(2)}_{li})=[0],
               \end{aligned}
           \end{equation}
           and
           \begin{equation}
           \begin{aligned}
                 d(\tau_{ij},[0])=&d(g^{(1)}_{ij}-g^{(2)}_{ij}-\lambda^{\text{Smooth}}_{j}+\lambda^{\text{Smooth}}_{i},g^{(1)}_{ij}-g^{(2)}_{ij}-\lambda^{\text{Top}}_{j}+\lambda^{\text{Top}}_{i})\\
                 =&d(\lambda^{\text{Smooth}}_{i}-\lambda^{\text{Smooth}}_{j},\lambda^{\text{Top}}_{i}-\lambda^{\text{Top}}_{j})\\
                 \le& d(\lambda^{\text{Smooth}}_{i}-\lambda^{\text{Smooth}}_{j},\lambda^{\text{Smooth}}_{i}-\lambda^{\text{Top}}_{j})+d(\lambda^{\text{Smooth}}_{i}-\lambda^{\text{Top}}_{j},\lambda^{\text{Top}}_{i}-\lambda^{\text{Top}}_{j})\\
                 =&d(\lambda^{\text{Smooth}}_{j},\lambda^{\text{Top}}_{j})+d(\lambda^{\text{Smooth}}_{i},\lambda^{\text{Top}}_{i})<2\epsilon.
           \end{aligned}
           \end{equation}

           We need to find a family of smooth maps $\{\lambda^{'}_{i}\}_{i\in I}$($\lambda^{'}_{i}:U_{i}\to T^{m-k}$) such that 
           \begin{equation}
               \tau_{ij}=\lambda^{'}_{j}-\lambda^{'}_{i}.
           \end{equation}
           Define 
           \begin{equation}
               \lambda^{'}_{i}(x):=\text{pr}_{T^{m-k}}(-\displaystyle\sum_{j\in I}\rho_{j}(x)\cdot(\log_{T^{m-k}}\circ\tau_{ij})(x))
           \end{equation}for all $x\in U_{i}$.
           Then $\{\lambda^{'}_{i}\}_{i\in I}$ is a family of smooth maps satisfies:
           \begin{equation}
           \begin{aligned}
               \lambda^{'}_{j}-\lambda^{'}_{i}&=\text{pr}_{T^{m-k}}(-\displaystyle\sum_{l\in I}\rho_{l}\cdot(\log_{T^{m-k}}\circ\tau_{jl}))-\text{pr}_{T^{m-k}}(-\displaystyle\sum_{l\in I}\rho_{l}\cdot(\log_{T^{m-k}}\circ\tau_{il}))\\
               &=\text{pr}_{T^{m-k}}(\displaystyle\sum_{l\in I}\rho_{l}\cdot(\log_{T^{m-k}}(\tau_{il}-\tau_{jl})))\\
               &=\text{pr}_{T^{m-k}}(\displaystyle\sum_{l\in I}\rho_{l}\cdot(\log_{T^{m-k}}\circ\tau_{ij}))\\
               &=\text{pr}_{T^{m-k}}\circ\log_{T^{m-k}}\circ\tau_{ij}=\tau_{ij}.
           \end{aligned}   
           \end{equation}
           Consequently, the family $\{\lambda^{\text{Smooth}}_{i}+\lambda^{'}_{i}\}_{i\in I}$ of smooth maps satisfies 
           \begin{equation}
             g^{(1)}_{ij}-g^{(2)}_{ij}=(\lambda^{\text{Smooth}}_{j}+\lambda^{'}_{j})-(\lambda^{\text{Smooth}}_{i}+\lambda^{'}_{i})
          \end{equation} on $U_{i}\cap U_{j}$. By Lemma \ref{lem: A.3}, we obtain $M_{1}\cong_{\text{Diffeo bundle iso}} M_{2}$.
    \end{enumerate}
\end{proof}

\begin{proof of theorem A.1} 
        Injectivity of $\Phi$ follows immediately from Lemma \ref{lem: A.4} (2).

        To prove surjectivity of $\Phi$, let $M^{\text{Top}}\in \Prin^{\text{Top}}_{T^{m-k}}(N)$ be an arbitrary topological principal $T^{m-k}$-bundle. We need to find a smooth principal $T^{m-k}$-bundle $M^{\text{Smooth}}$ that is topologically  equivalent to $M^{\text{Top}}$.

        Let $g^{\text{Top}}_{ij}:U_{i}\cap U_{j}\to T^{m-k}$ be the continuous transition maps of the principal $T^{m-k}$-bundle $M^{\text{Top}}$. They satisfy $g^{\text{Top}}_{ii}=[0]$ and the cocycle condition
         \begin{equation}
             g^{\text{Top}}_{ij}+g^{\text{Top}}_{jl}+g^{\text{Top}}_{li}=[0]
         \end{equation}on $U_{i}\cap U_{j}\cap U_{l}$.
         
         By Lemma \ref{lem: A.2}, there exist smooth maps 
         \begin{equation}
             g^{\text{Smooth}}_{ij}:U_{i}\cap U_{j}\to T^{m-k}, i,j\in I
         \end{equation} such that $g^{\text{Smooth}}_{ij}+g^{\text{Smooth}}_{ji}=[0]$ and $d(g^{\text{Smooth}}_{ij}(x),g^{\text{Top}}_{ij}(x))<\epsilon$ for all $x\in U_{i}\cap U_{j}$. 
          Denote \begin{equation}
            \eta_{ijl}:=g^{\text{Smooth}}_{ij}+g^{\text{Smooth}}_{jl}+g^{\text{Smooth}}_{li}
        \end{equation}on $U_{i}\cap U_{j}\cap U_{k}$.
       Then $\{\eta_{ijl}\}_{i,j,l\in I}$ is a family of smooth maps satisfies 
       \begin{equation}
             \begin{aligned}
                 \eta_{ijs}-\eta_{ijl}+\eta_{isl}-\eta_{jsl}
                 =&(g^{\text{Smooth}}_{ij}+g^{\text{Smooth}}_{js}+g^{\text{Smooth}}_{si})-(g^{\text{Smooth}}_{ij}+g^{\text{Smooth}}_{jl}+g^{\text{Smooth}}_{li})\\
                 +&(g^{\text{Smooth}}_{is}+g^{\text{Smooth}}_{sl}+g^{\text{Smooth}}_{li})-(g^{\text{Smooth}}_{js}+g^{\text{Smooth}}_{sl}+g^{\text{Smooth}}_{lj})\\
                 =&g^{\text{Smooth}}_{si}-g^{\text{Smooth}}_{jl}+g^{\text{Smooth}}_{is}-g^{\text{Smooth}}_{lj}=[0]
             \end{aligned}
         \end{equation}on $U_{i}\cap U_{j}\cap U_{s}\cap U_{l}$,
       and by the bi-invariance of $g _{\text{Eucl}}$, we have:
         \begin{equation}
             \begin{aligned}
                 d(\eta_{ijl},[0])=&d(g^{\text{Smooth}}_{ij}+ g^{\text{Smooth}}_{jl}+ g^{\text{Smooth}}_{li},g^{\text{Top}}_{ij}+ g^{\text{Top}}_{jl}+ g^{\text{Top}}_{li})\\
                 \le &d(g^{\text{Smooth}}_{ij}+ g^{\text{Smooth}}_{jl}+ g^{\text{Smooth}}_{li},g^{\text{Smooth}}_{ij}+ g^{\text{Smooth}}_{jl}+ g^{\text{Top}}_{li})\\
                 +&d(g^{\text{Smooth}}_{ij}+ g^{\text{Smooth}}_{jl}+ g^{\text{Top}}_{li},g^{\text{Smooth}}_{ij}+ g^{\text{Top}}_{jl}+ g^{\text{Top}}_{li})\\
                 +&d(g^{\text{Smooth}}_{ij}+ g^{\text{Top}}_{jl}+ g^{\text{Top}}_{li},g^{\text{Top}}_{ij}+ g^{\text{Top}}_{jl}+ g^{\text{Top}}_{li})\\
=&d(g^{\text{Smooth}}_{ij},g^{\text{Top}}_{ij})+d(g^{\text{Smooth}}_{jl},g^{\text{Top}}_{jl})+d(g^{\text{Smooth}}_{li},g^{\text{Top}}_{li})\\
                 <&3\epsilon.
             \end{aligned}
         \end{equation}

         We need to find a family of smooth maps $\{\lambda_{ij}\}_{i,j\in I}$ ($\lambda_{ij}:U_{ij}\to T^{m-k}$) such that 
         \begin{equation}\label{cocycle}
         g^{\text{Smooth}}_{ij}+\lambda_{ij}+g^{\text{Smooth}}_{jl}+
         \lambda_{jl}+g^{\text{Smooth}}_{li}+\lambda_{li}=[0]
         \end{equation} 
         for all $i,j,l\in I$, and $d(\lambda_{ij},[0])<3\epsilon$.
         
        Define \begin{equation}
            \theta_{ij}(x):=\displaystyle\sum_{l\in I}\rho_{l}(x)\cdot(\log_{T^{m-k}}\circ\eta_{ijl})(x)
        \end{equation}for all $x\in U_{i}\cap U_{j}$.

        We compute 
        \begin{equation}
        \begin{aligned}
             \theta_{ij}+\theta_{jl}+\theta_{li}=&\displaystyle\sum_{s\in I}[\rho_{s}\cdot(\log_{T^{m-k}}\circ\eta_{ijs})+\rho_{s}\cdot(\log_{T^{m-k}}\circ\eta_{jls})+\rho_{s}\cdot(\log_{T^{m-k}}\circ\eta_{lis})]\\
             =&\displaystyle\sum_{s\in I}\rho_{s}\cdot[(\log_{T^{m-k}}\circ\eta_{ijs})+(\log_{T^{m-k}}\circ\eta_{jls})+(\log_{T^{m-k}}\circ\eta_{lis})]\\
             =&\displaystyle\sum_{s\in I}\rho_{s}\cdot(\log_{T^{m-k}}(\eta_{ijs}+\eta_{jls}+\theta_{lis}))\\
             =&\displaystyle\sum_{s\in I}\rho_{s}\cdot(\log_{T^{m-k}}(\eta_{ijl}))=\log_{T^{m-k}}(\eta_{ijl}).
        \end{aligned}
        \end{equation}
        
        Let $\lambda_{ij}:=\text{pr}_{T^{m-k}}(-\theta_{ij})$, then
        \begin{equation}
        \begin{aligned}
              g^{\text{Smooth}}_{ij}+\lambda_{ij}+g^{\text{Smooth}}_{jl}+
         \lambda_{jl}+g^{\text{Smooth}}_{li}+\lambda_{li}=&\eta_{ijl}+\text{pr}_{T^{m-k}}(-\theta_{ij})+\text{pr}_{T^{m-k}}(-\theta_{jl})+\text{pr}_{T^{m-k}}(-\theta_{li})\\
         =&\text{pr}_{T^{m-k}}(\log_{T^{m-k}}(\eta_{ijl})-(\theta_{ij}+\theta_{jl}+\theta_{li}))\\
         =&\text{pr}_{T^{m-k}}(\log_{T^{m-k}}(\eta_{ijl})-\log_{T^{m-k}}(\eta_{ijl}))=[0],
        \end{aligned} 
        \end{equation}
         and
        \begin{equation}
         d(\lambda_{ij},[0])=|\theta_{ij}|\le \displaystyle\sum_{l\in I}|\rho_{l}|\cdot|\log_{T^{m-k}}\circ\eta_{ijl}|=\displaystyle\sum_{l\in I}\rho_{l}\cdot d(\eta_{ijl},[0])<\displaystyle\sum_{l\in I}\rho_{l}\cdot 3\epsilon=3\epsilon.
        \end{equation}
          Consequently, (\ref{cocycle}) holds, and we obtain a family of 
          transition maps $\{g^{\text{Smooth}}_{ij}+\lambda_{ij}\}_{i,j\in I}$ that continuously deforms to $\{g^{\text{Top}}_{ij}\}_{i,j\in I}$. By taking the quotient via these smoothed transition maps $\{g^{\text{Smooth}}_{ij}+\lambda_{ij}\}_{i,j\in I}$, we obtain a smooth principal bundle, which we denote by $M^{\text{Smooth}}$.
          
          We estimate 
          \begin{equation}
          \begin{aligned}
              d(g^{\text{Top}}_{ij},g^{\text{Smooth}}_{ij}+\lambda_{ij})&\le d(g^{\text{Top}}_{ij},g^{\text{Smooth}}_{ij})+d(g^{\text{Smooth}}_{ij},g^{\text{Smooth}}_{ij}+\lambda_{ij})\\
              &=d(g^{\text{Top}}_{ij},g^{\text{Smooth}}_{ij})+d([0],\lambda_{ij})<\epsilon+3\epsilon=4\epsilon.
          \end{aligned}            
          \end{equation}
             By Lemma \ref{lem: A.4} (1), $M^{\text{Smooth}}$ is topologically equivalent to $M^{\text{Top}}$ as a principal bundle.
\end{proof of theorem A.1}


\bigskip
\begin{thebibliography}{100}
	
	\bibitem{Hatcher} Allen Hatcher, \emph{Algebraic topology}, Cambridge University Press. (2002).
	
	\bibitem{Bochner} Salomon Bochner, \emph{Vector fields and Ricci curvature}, Bull. Amer. Math. Soc. 52 (1946), 776-797.

    \bibitem{BY} Salomon Bochner and Kentaro Yano, \emph{Curvature and Betti numbers}, Annals of Mathematics Studies, No. 32. Princeton University Press, Princeton, N.J. (1953).

    \bibitem{BS} Marianna C. Bonanome, Margaret H. Dean, Judith Putnam Dean, \emph{A sampling of remarkable groups : Thompson's, self-similar, lamplighter, and Baumslag-Solitar}, Cham, Switzerland : Birkhäuser, (2018).

    \bibitem{CGI} Jeff Cheeger and Mikhail Gromov, \emph{Collapsing Riemannian manifolds while keeping their curvature bounded}, I. J. Differential Geom. 23 (1986), no. 3, 309-346.

    \bibitem{CGII} Jeff Cheeger and Mikhail Gromov, \emph{Collapsing Riemannian manifolds while keeping their curvature bounded}, II. J. Differential Geom. 32 (1990), no. 1, 269-298.

    \bibitem{CFG92} Jeff Cheeger, Kenji Fukaya and Mikhail Gromov, \emph{Nilpotent structures and invariant metrics on collapsed manifolds}, J. Amer. Math. Soc. 5 (1992), no. 2, 327-372.

    \bibitem{Fukaya87ld} Kenji Fukaya, \emph{Collapsing Riemannian manifolds to ones of lower dimensions}, J. Differential Geom. 25 (1987), no. 1, 139-156.
 
    \bibitem{Fukaya88} Kenji Fukaya, \emph{A boundary of the set of the Riemannian manifolds with bounded curvatures and diameters}, J. Differential Geom. 28 (1988), no. 1, 1-21.

    \bibitem{Fukaya89} Kenji Fukaya, \emph{Collapsing Riemannian manifolds to ones with lower dimension II}, J. Math. Soc. Japan, 41 (1989), 333-356.

    \bibitem{FY92} Kenji Fukaya and Takao Yamaguchi, \emph{The fundamental groups of almost non-negatively curved manifolds}, Ann. of Math. (2) 136 (1992), no. 2, 253-333.
    
    \bibitem{GompfStipsicz} R.~Gompf and A.~Stipsicz, \emph{4-Manifolds and Kirby Calculus}, Graduate Studies in Mathematics, Vol. 20, American Mathematical Society, Providence, RI, (1999).

    \bibitem{Gromov78b} Mikhail Gromov, \emph{Almost flat manifolds}, J. Differential Geom. 13 (1978), no. 2, 231-241. 

    \bibitem{GLP} Mikhail Gromov, \emph{Structures m\'etriques pour les vari\'et\'es Riemanniennes}, Editions Cedic, Paris. (1981).  

	\bibitem{HuangWang20.1} Shaosai Huang and Bing Wang, \emph{Rigidity of the first Betti number via Ricci flow smoothing}, Sci. China Math. (2025). 

    \bibitem{HuangWang20.2} Shaosai Huang and Bing Wang, \emph{Ricci flow smoothing for locally collapsing manifolds}, arXiv:2008.09956.
    
	\bibitem{Huang20} Hongzhi Huang, \emph{Fibrations, and stability for compact group actions on manifolds with local bounded Ricci covering geometry}, arXiv:2002.07383.
	
	\bibitem{HKRX18} Hongzhi Huang; Lingling Kong; Xiaochun Rong; Shicheng Xu, \emph{Collapsed Manifolds With Ricci Bounded Covering Geometry}, arXiv:1808.03774.

    \bibitem{MS} John W. Milnor and James D. Stasheff, \emph{Characteristic Classes}, Princeton University Press. (1974).
    
    \bibitem{KN63} Shoshichi Kobayashi and Katsumi Nomizu, \emph{Foundations of Differential Geometry, Volume I}, New York: Interscience Publishers. (1963). 
    
    \bibitem{KW11} Vitali Kapovitch and Burkhard Wilking, \emph{Structure of fundamental groups of manifolds with Ricci curvature bounded below}, arXiv:1105.5955.
    
    \bibitem{NaberZhang} Aaron Naber and Ruobing Zhang, \emph{Topology and $\varepsilon$-regularity theorems on collapsed manifolds with Ricci curvature bounds}, Geometry and Topology 20 (2016), no. 5, 2575-2664.

    \bibitem{Petersen} Peter Petersen, \emph{Riemannian Geometry} 3rd edition, New York: Springer-Verlag, GTM 171. (2016).
    
    \bibitem{PanYe24} Jiayin Pan and Zhu Ye, \emph{Nonnegative Ricci curvature, splitting at infinity, and first Betti number rigidity}, arXiv:2404.10145. 

    \bibitem{Raghunathan} Madabusi S. Raghunathan, \emph{Discrete subgroups of Lie groups}, Springer-Verlag. (1972).
    
	\bibitem{Rong03} Xiaochun Rong, \emph{Collapsed Riemannian manifolds with bounded sectional curvature}, arXiv:math/0304267.
	
	\bibitem{Rong19} Xiaochun Rong, \emph{A New Proof of the Gromov's Theorem on Almost Flat Manifolds}, arXiv:1906.03377.

    \bibitem{Rong22} Xiaochun Rong, \emph{Collapsed manifolds with local Ricci bounded covering geometry}, arXiv:2211.09998. 
    
    \bibitem{Ruh} Ernst A. Ruh, \emph{Almost flat manifolds}, J. Differential Geom. 17 (1982), no. 1, 1-14.

    \bibitem{Steenrod} N. Steenrod, \emph{The Topology of Fibre Bundles}, Princeton University Press, New Jersey. (1951).
	
    \bibitem{Za22} Sergio Zamora, \emph{First Betti Number and Collapse}, arXiv:2209.12628. 
\end{thebibliography}
\end{document}